\newcommand{\nosemic}{\renewcommand{\@endalgocfline}{\relax}}
\newcommand{\dosemic}{\renewcommand{\@endalgocfline}{\algocf@endline}}
\let\oldnl\nl
\newcommand{\nonl}{\renewcommand{\nl}{\let\nl\oldnl}}
\newtheorem{theorem}{Theorem}
\newtheorem{lemma}[theorem]{Lemma}
\newtheorem{proposition}[theorem]{Proposition}
\newtheorem{definition}[theorem]{Definition}
\newtheorem{remark}{Remark}
\def\beq{\begin{equation}}
\def\eq{\begin{equation}}
\def\eeq{\end{equation}}
\def\qe{\end{equation}}
\def\beqn{\begin{eqnarray*}}
\def\eeqn{\end{eqnarray*}}
\def\bitem{\begin{itemize}}
\def\eitem{\end{itemize}}
\def\benum{\begin{enumerate}}
\def\eenum{\end{enumerate}}
\def\bmult{\begin{multline*}}
\def\emult{\end{multline*}}
\def\bcenter{\begin{center}}
\def\ecenter{\end{center}}
\DeclareMathOperator*{\argmax}{arg\, max}
\DeclareMathOperator*{\argmin}{arg\, min}
\DeclareMathOperator{\rank}{rank}
\def\cA{\mathcal{A}}
\def\cH{\mathcal{H}}
\def\cU{\mathcal{U}}
\def\cV{\mathcal{V}}
\def\bA{\boldsymbol{A}}
\def\bU{\boldsymbol{U}}
\def\bb{\mathbf{b}}
\def\bbC{\mathds{C}}
\def\bbE{\mathds{E}}
\def\bbF{\mathds{F}}
\def\bbH{\mathds{H}}
\def\bbK{\mathds{K}}
\def\bbL{\mathds{L}}
\def\bbP{\mathds{P}}
\def\bbR{\mathds{R}}
\def\bbS{\mathds{S}}
\def\bbT{\mathds{T}}
\def\bb\bU{\mathds{\bU}}
\def\bbV{\mathds{V}}
\def\bbZ{\mathds{Z}}
\newcommand{\E}{\operatorname{\mathds{E}}}
\renewcommand{\P}{\operatorname{\mathds{P}}}
\renewcommand{\bar}{\overline}
\renewcommand{\hat}{\widehat}
\renewcommand{\tilde}{\widetilde}
\newcommand{\Cov}{\operatorname{Cov}}
\def\\bUnif{\text{\bUnif}}
\newcommand{\1}{\mathds{1}}
\date{\today}
\begin{document}

\begin{frontmatter}

\title{Testing Gaussian Process with Applications to Super-Resolution}
\runtitle{Spacing Test for Super-Resolution}

\begin{aug}
\author{\fnms{Jean-Marc} \snm{Aza\"is${}^\bullet$}\ead[label=e1]{jean-marc.azais@math.univ-toulouse.fr}},
\author{\fnms{Yohann} \snm{De Castro${}^{\dag{},\star{}}$}\ead[label=e2]{yohann.decastro@math.u-psud.fr}\ead[label=e4]{yohann.de-castro@inria.fr}}
\author{\and\fnms{St\'ephane} \snm{Mourareau${}^\circ$}\ead[label=e3]{stephane.mourareau@u-pem.fr}}

\affiliation{Universit\'e Paul Sabatier}
\address{${}^\bullet$Institut de Math\'ematiques de Toulouse, Universit\'e Paul Sabatier\\ 118 route de Narbonne, 31062 Toulouse, France.}

\affiliation{Universit\'e Paris-Sud}
\address{${}^\dag$Laboratoire de Math\'ematiques d'Orsay\\  Univ. Paris-Sud, CNRS,  Universit\'e Paris-Saclay, 91405 Orsay, France.
}  
\affiliation{INRIA}
\address{${}^\star$INRIA, Centre de Recherche de Paris, Équipe MoKaPlan\\  2 rue Simone Iff, 75012 Paris, France.
}  
\affiliation{Universit\'e Paris-Est Marne-la-Vall\'ee}
\address{${}^\circ$Laboratoire d'Analyse et de Math\'ematiques Appliqu\'ees, Univ. Paris-Est\\ Champs sur Marne, 77454 Marne La Vall\'ee, France.
}

\runauthor{Aza\"is, De Castro and Mourareau}
\end{aug}

\begin{abstract}
This article introduces exact testing procedures on the mean of a Gaussian process $X$ derived from the outcomes of $\ell_1$-minimization over the space of complex valued measures. The process $X$ can be thought as the sum of two terms: first, the convolution between some kernel and a target atomic measure (mean of the process); second, a random perturbation by an additive centered Gaussian process. The first testing procedure considered is based on a dense sequence of grids on the index set of~$X$ and we establish that it converges (as the grid step tends to zero) to a randomized testing procedure: the decision of the test depends on the observation $X$ and also on an independent random variable. The second testing procedure is based on the maxima and the Hessian of $X$ in a grid-less manner. We show that both testing procedures can be performed when the variance is unknown (and the correlation function of $X$ is known). These testing procedures can be used for the problem of deconvolution over the space of complex valued measures, and applications in frame of the Super-Resolution theory are presented. As a byproduct, numerical investigations may demonstrate that our grid-less method is more powerful (it~detects sparse alternatives) than tests based on very thin~grids. 

\end{abstract}

\begin{keyword}[class=MSC]
\kwd[Primary ]{62E15}
\kwd{62F03}
\kwd{60G15}
\kwd{62H10}
\kwd{62H15} 
\kwd[; secondary ]{60E05}
\kwd{60G10}
\kwd{62J05}
\kwd{94A08}
\end{keyword}

\begin{keyword}
\kwd{Hypothesis Testing}
\kwd{Gaussian Process}
\kwd{Kac-Rice formula}
\kwd{Super-Resolution}
\end{keyword}

\end{frontmatter}

\maketitle

{
\it
\footnotesize
\begin{center}
Preprint of \today
\end{center}
\vspace*{-0.5cm}
}

\section{Introduction}
\subsection{Grid-less spike detection through the ‘‘continuous'' LARS}

New testing procedures based on the outcomes of $\ell_1$ minimization methods have attracted a lot of attention in the statistical community. Of particular interest is the so-called ‘‘{\it Spacing test}\,'', that we referred to as $S^{\mathrm{ST}}$, based on the Least-Angle Regression Selection (LARS), that measures the significance of the addition of a new active variable along the LARS path, see \cite[Chapter 6]{hastie2015statistical} for further details. Specifically, one is testing the relative distance between consecutive ‘‘{\it knots}\,'' of the LARS, for instance~$\lambda_{1,P}$ and $\lambda_{2,P}$. The first knot $\lambda_{1,P}$ is the maximal correlation between a response variable $y\in\bbC^N$ and~$P$ predictors. The second knot $\lambda_{2,P}$ is then the correlation between some residuals and $P-1$ predictors, and so on. This approach is now well referenced among the regularized methods of high-dimensional statistics and it can be linked to minimizing the $\ell_1$-norm over $P$ coordinates, see for instance \cite[Chapter~6]{hastie2015statistical}. 

In this paper, we focus on $\ell_1$-minimization over the space of signed measures and we ask for testing procedures based on these solutions. Indeed, in deconvolution problems over the space of measures \cite{Bredis_Pikkarainen_13}\textemdash \textit{e.g.}, Super-Resolution or line spectral estimation \cite{Candes_FernandezGranda_14,fernandez2016super,Duval_Peyre_JFOCM_15,Tang_Bhaskar_Shah_Recht_13,DeCastro_Gamboa_12,Azais_DeCastro_Gamboa_15}\textemdash one may observe a noisy version of a convolution of a target discrete measure by some known kernel $K$ and one may be willing to infer on the target discrete measure. In this case, testing a particular measure is encompassed by testing the mean of some ‘‘correlation'' process~$Z$, see Section~\ref{sec:SR} for further details. 

\medskip

\begin{figure}[!h]
\includegraphics[width=0.45\textwidth]{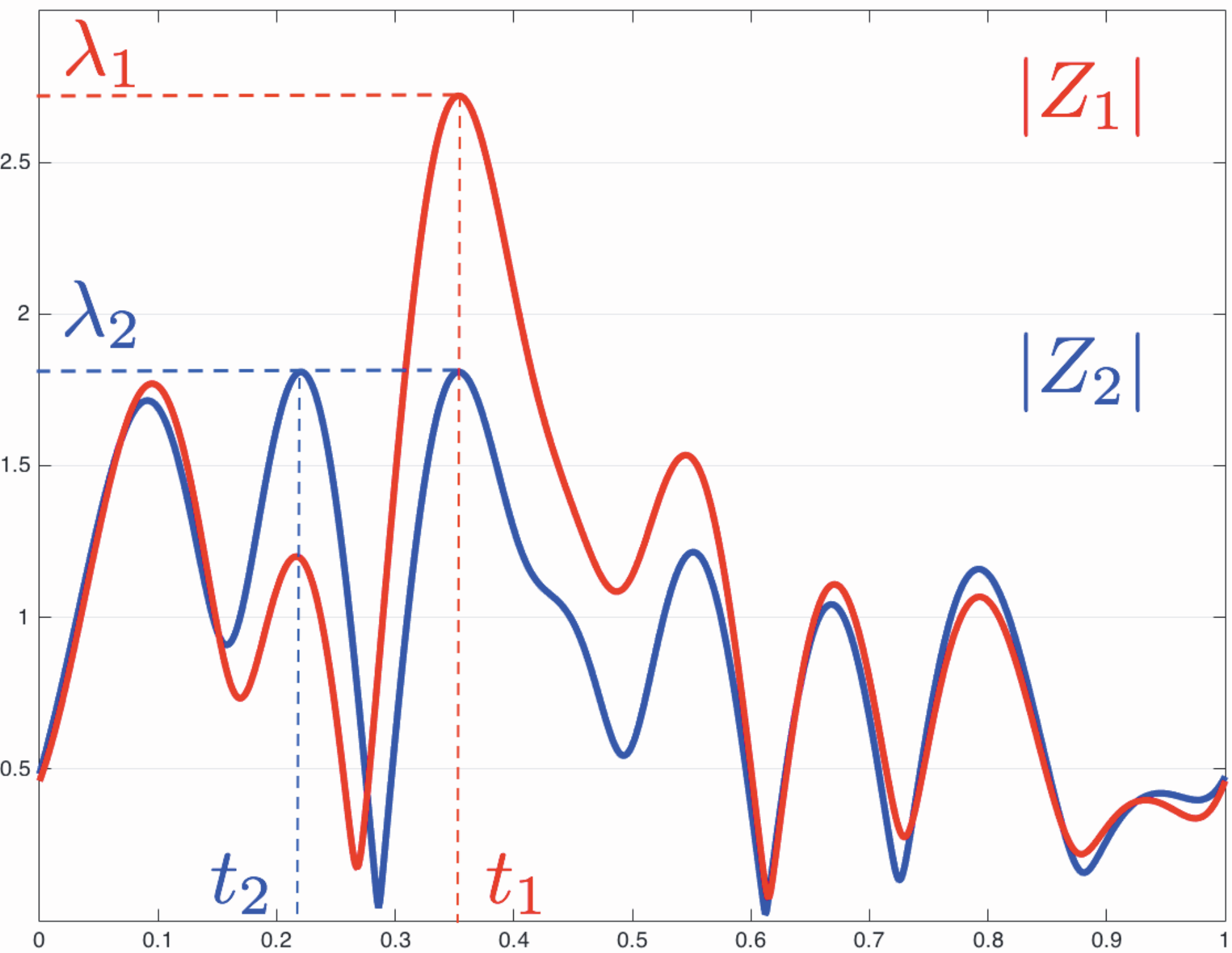}
\caption{ LARS for Super-Resolution:  we fit a Dirichlet kernel (which is the Point Spread Function of Super-Resolution) at the maximal correlation point $t_1$ until the maximal correlation in the residual is matched at a second point $t_2\neq t_1$.}
\label{fig:LARS_SR}
\end{figure}
In general deconvolution problems, remark that there is an uncountable number of predictors with valued in a hilbert space (not necessarily finite)\textemdash while there were~$P$ predictors previously when inferring on vectors of $\bbR^N$ in the high-dimensional statistics framework. Indeed, we are looking at correlations $Z(t)=\langle y,k(t)\rangle$ between a response variable $y$ and a ‘‘feature map'' $k(t)$ indexed by a continuum, say for instance~$t\in\bbK=[0,2\pi)$. In this case, the set of predictors is uncountable and given by $\{k(t)\,;\ t\in\bbK\}$. Furthermore, $k(t)$ is an element of the Reproducing Kernel Hilbert Space~$\cH$ (RKHS) defined by the convolution kernel $K$\textemdash assumed to be symmetric positive definite. In particular, the hilbert space $\cH$ can be infinite dimensional. As an example, assume that one observes some Fourier coefficients of some discrete measure on the torus~$[0,2\pi)$ and one is willing to infer on its support. A strategy would be to look at correlations between the response variable $y\in\bbC^{N}$ and the Fourier curve $k(t)=(\cos(kt)\pm\imath\sin(kt))_{-f_c\leq k\leq f_c}\in\bbC^N$ for some frequency cut-off~$f_c\geq1$ so that $N=2f_c+1$. It results in a complex valued correlation process $Z(t):=\langle y,k(t)\rangle=\sum y_k e^{\imath k t}$ indexed by $t\in[0,2\pi)$. In this case, the RKHS~$\cH$ has dimension $N$, the number of observed Fourier coefficients, and the convolution kernel is given by the Dirichlet kernel, see Section~\ref{sec:SR}. As an illustration, we present Figure~\ref{fig:LARS_SR} where we take $Z_1=Z$ and the red curve displays the absolute value of the correlation process $Z$. One can standardly show that $|Z|(t)$ is the likelihood of the model that consists in one spike at point~$t$. Therefore, its maximal value~$\lambda_1$ can be interpreted as the Maximum Likelihood for models with one spike. Its argument maximal point~$t_1$ is then the Maximum Likelihood Estimator and one may be willing to consider it as a first estimation of the target discrete measure's support. Then one can consider the residuals $Z_2=Z_1-a\langle y,k(t_1)\rangle$ where $a\in\bbC$ is the weight of the estimated signal chosen so that we get the blue curve of Figure~\ref{fig:LARS_SR}, namely a second support point $t_2$ should enter the model since the residuals $|Z_2|$ achieve their maximal absolute value at two locations, $t_1$ and~$t_2$. More details can be found in Section~\ref{sec:SRLARS}.

In this framework, the LARS algorithm does not return a sequence of entries (among $P$ possible coordinates) and phases as in high-dimensional statistics but rather a sequence of locations $t_1,t_2,\ldots$ (among the continuum~$\bbK=[0,2\pi)$) and phases. In this paper, we invoke the LARS to this framework\textemdash we referred to it as ‘‘continuous'' LARS\textemdash for which an uncountable number of active variables may enter the model. We present this extension in Section~\ref{sec:LARS} defining consecutive knots $(\lambda_1,\lambda_2)$. One can wonder:
\begin{itemize}
\item 
{\it 
Can the Spacing test be used in the frame of Super-Resolution? 
}
\item 
{\it 
Is there a grid-less procedure more powerful, in the sense of detecting spikes, than the Spacing tests constructed on thin grids?
}
\end{itemize}

\medskip

{
\begin{figure}[!h]
\includegraphics[width=0.85\textwidth]{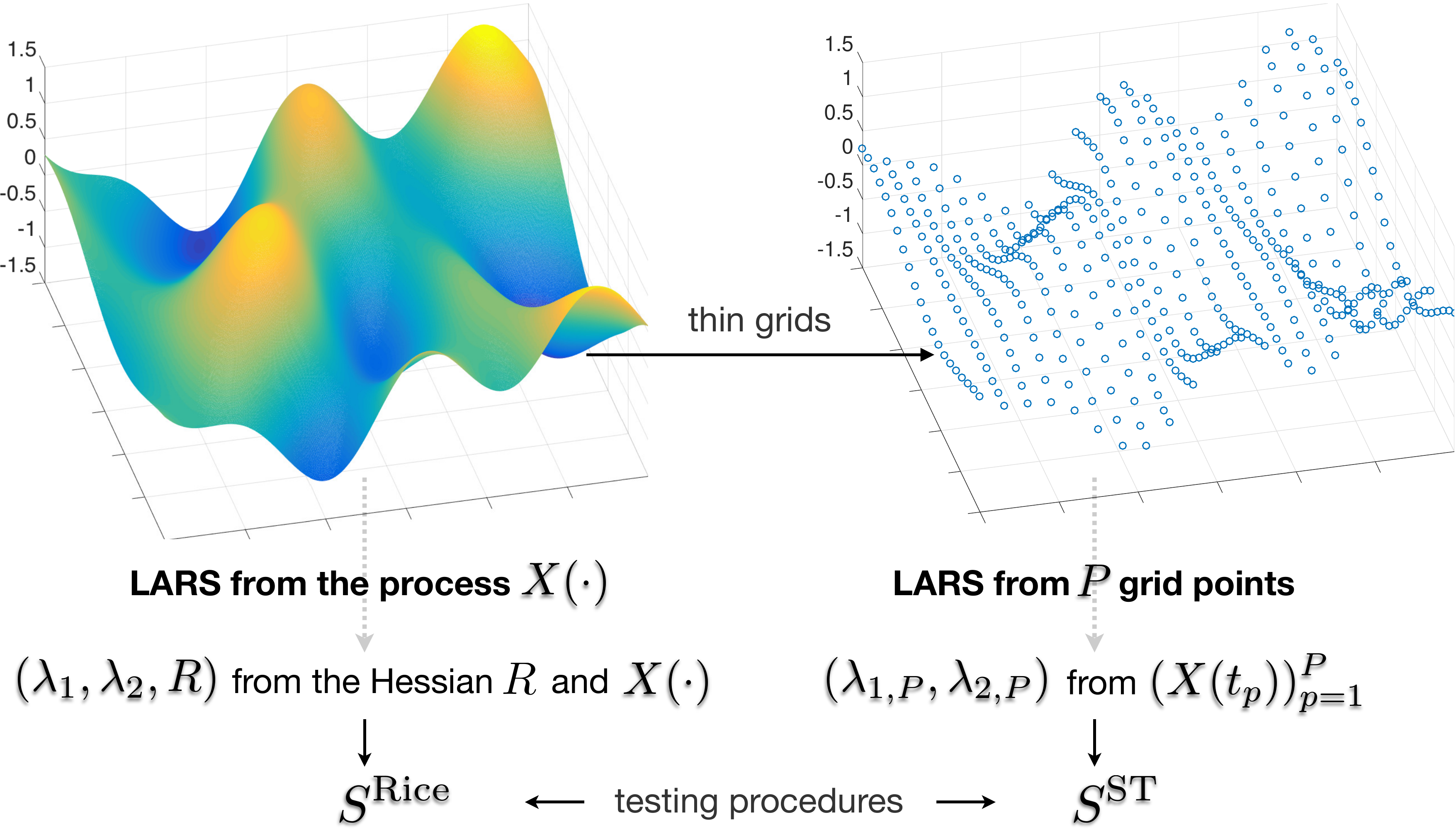}
\caption{The grid-less approach uses the Hessian and the first two ‘‘knots'' $(\lambda_1,\lambda_2)$ of the ‘‘continuous'' LARS  to build the test statistics $S^{\mathrm{Rice}}$. We compare it to the grid approach that builds a test statistics~$S^{\mathrm{ST}}$ using the knots $(\lambda_{1,P},\lambda_{2,P})$ computed from a $P$ points grid discretization $(X(t_p))_{p=1}^P$ of the continuous process $X$.}
\label{fig:grids_vs_continuum}
\end{figure}
}
Interestingly, as we will prove, the answer is no to the first question if no modifications of the test statistic is done. Furthermore, the way that the Spacing test can be fixed to be extended to a ‘‘grid-less'' frame gives a new testing procedure~$S^{\mathrm{Rice}}$ that accounts for the distance between consecutive knots $(\lambda_1,\lambda_2)$ with respect to value of the Hessian $R$ at some maximal point, see Figure~\ref{fig:grids_vs_continuum} for a global view on our approach. 

\subsection{A comparative study}
When the predictors are normalized, the Spacing test (ST) statistics is given by the expression
\[
S^{\text{ST}}(\lambda_{1,P},\lambda_{2,P}):=\frac{\overline\Phi(\lambda_{1,P})}{\overline\Phi(\lambda_{2,P})}
\] 
where $\overline\Phi=1-\Phi$ is the Gaussian survival function and~$\Phi$ the standard normal cumulative distribution function. In the framework of high-dimensional statistics, this statistics is exactly distributed w.r.t. a uniform law on~$[0,1]$ under the global null, namely $S^{\text{ST}}$ can be considered as the observed significance~\cite{taylor2014exact,ADCM16}. It is clear that one should not use this testing procedure in the Super-Resolution framework since there is no theoretical guarantees in this case. Yet the practitioner may be tempted to replace $(\lambda_{1,P},\lambda_{2,P})$ by $(\lambda_{1},\lambda_{2})$ given by the ‘‘continuous'' LARS. Unfortunately, this paper shows that the resulting test statistics~$S^{\text{ST}}$ is non conservative in this frame, \textit{i.e.}, it makes too many false rejections and one should avoid using it in practice, see the green line in Figure~\ref{unbiaised_naive}.

\begin{figure}[!h]
\includegraphics[width=4.5cm,height=4.5cm]{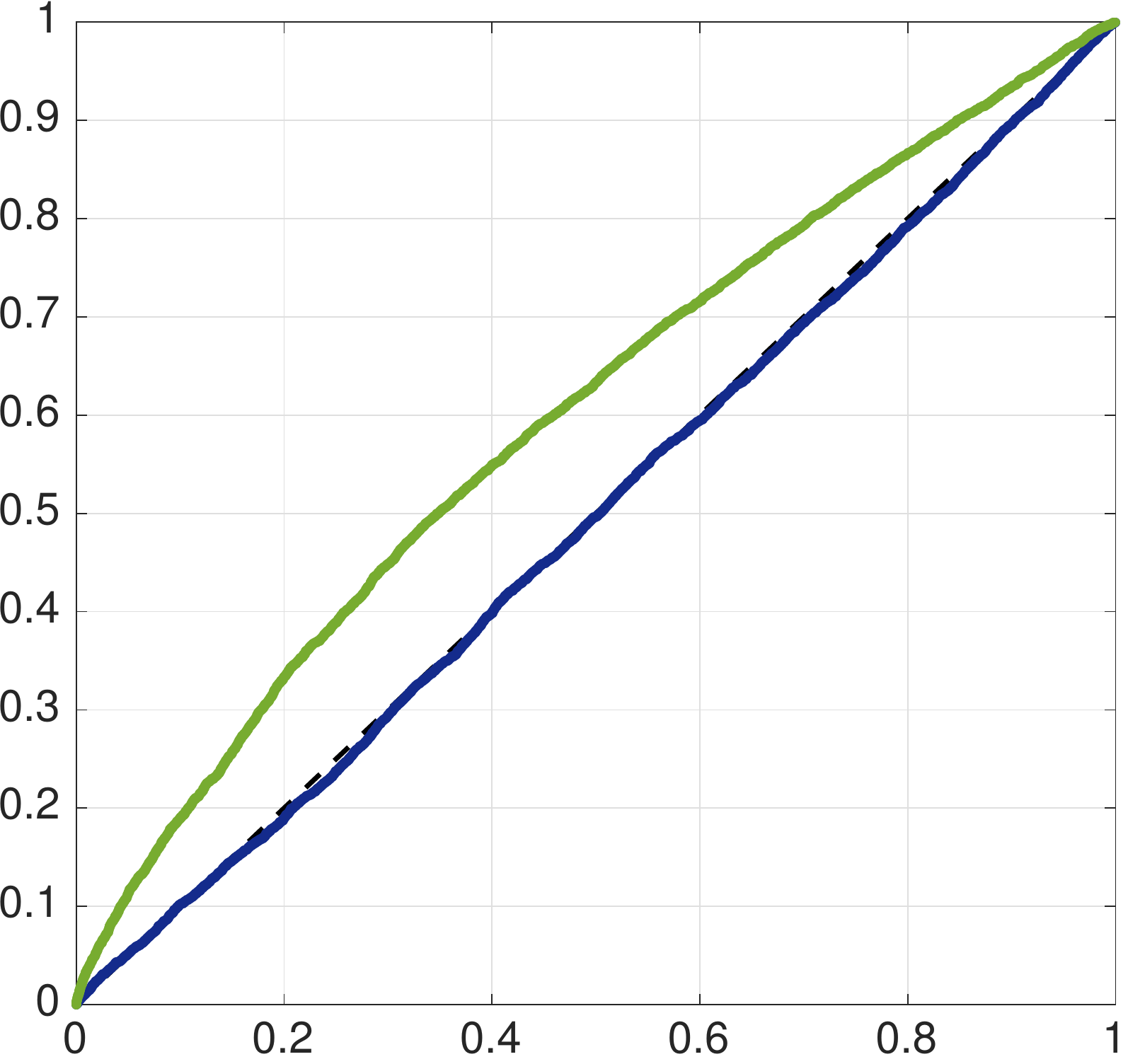}
\ \ \ \ \ 
\includegraphics[width=4.5cm,height=4.5cm]{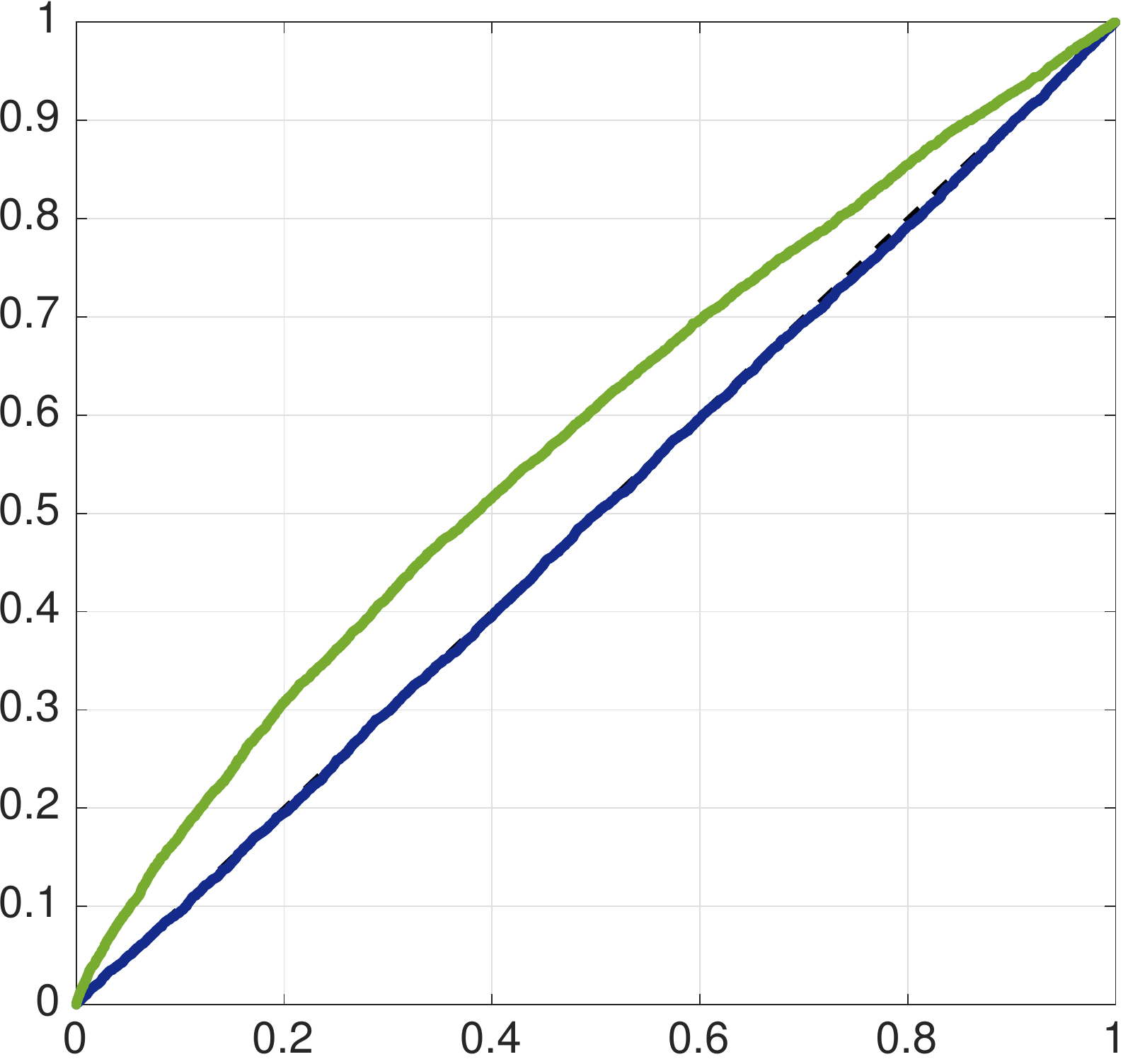}
\ \ \ \ \ 
\includegraphics[width=4.5cm,height=4.5cm]{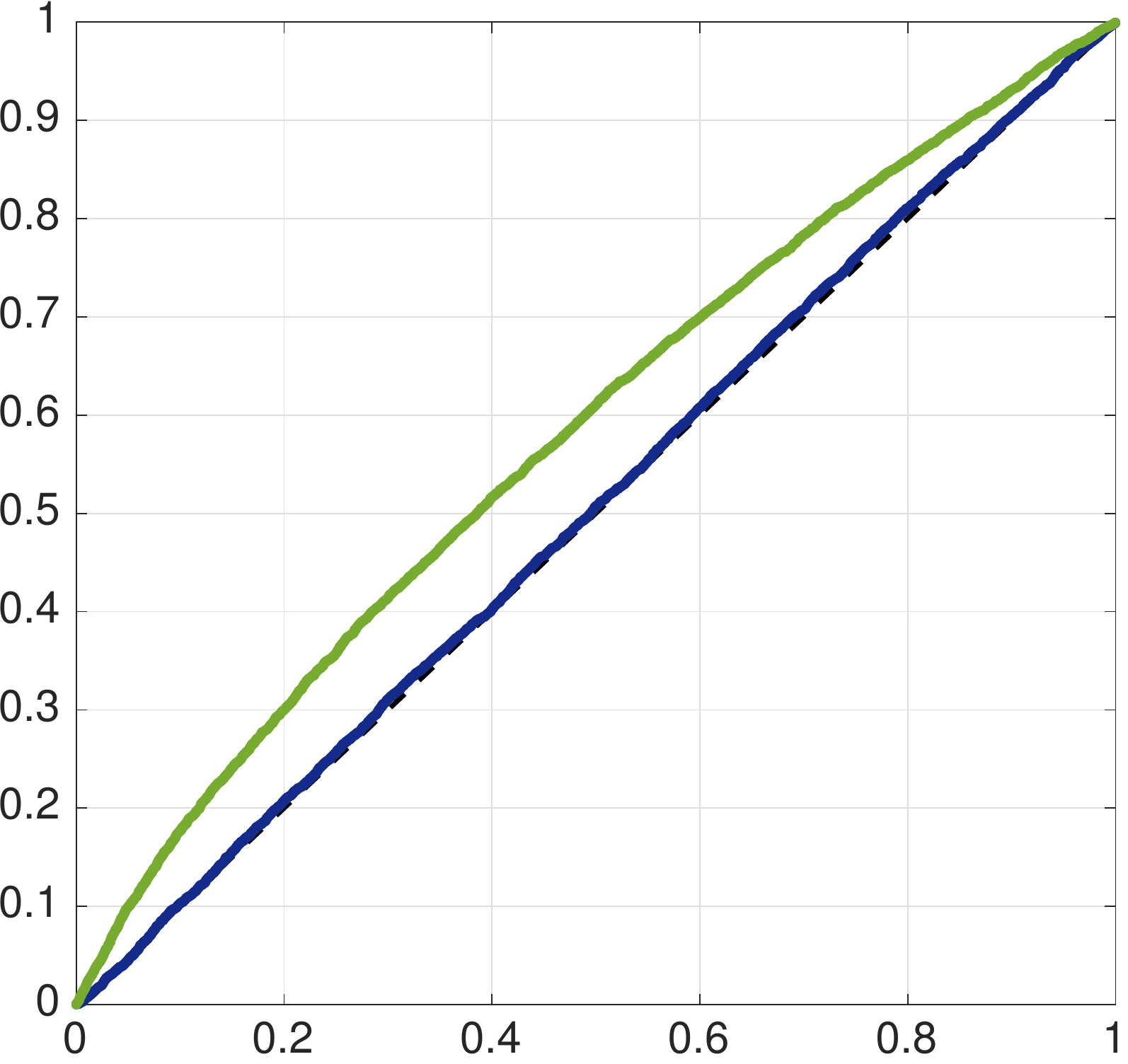}
\caption{{\bf [Under the null]} Comparison  of the  empirical cumulative distribution of the two statistics $S^{\rm Rice}$ $($blue line, see Theorem~\ref{thm:rice_known_variance}$)$  and $ S^{\rm ST}  $ $($green line$)$ {under the null hypothesis} when applied to the consecutive knots~$(\lambda_1,\lambda_2)$ given by the ‘‘continuous'' LARS in both cases. The diagonal $($cdf of the uniform$)$ is represented in dotted black line. The model is described by the Super-Resolution framework $($see Section~\ref{sec:SR}$)$ with cutoff frequencies $f_c = 3,5,7$ from left to right. The new test statistic $S^{\rm Rice}$ is exactly distributed w.r.t.\! the uniform law on $[0,1]$ under the null hypothesis.}
\label{unbiaised_naive}
\end{figure}

\noindent
To overcome this disappointing feature, one may be willing to consider thinner and thinner grids and look at the limit as $P$ tends to infinity. In this case, one can show that $\lambda_{1,P}$ tends to the~$\lambda_1$ of ‘‘continuous'' LARS, but $\lambda_{2,P}$ does not converge to $\lambda_2$, it converges to $\overline{\lambda}_2$ as shown in \eqref{eq:limit_l2}. This results in a limit test that is a randomized version of the Spacing test that we referred to as $S^{\mathrm{Grid}}$ and presented in Theorem~\ref{thm:grid_known_variance}. 

The second approach is to take a thin grid and to use $S^{\mathrm{ST}}$. This approach is perfectly valid, this test statistics follows a uniform distribution under the null and it should be compared to our new testing procedure $S^{\mathrm{Rice}}$. This numerical investigation has been performed in the frame of Super-Resolution and it is presented in Figure~\ref{Compa_1mean}, more details can be found in Section~\ref{sec:num}. Figure~\ref{Compa_1mean} gives the cumulative distribution functions of the test statistics under ‘‘sparse'' alternatives, {\it i.e.,} when true spikes are to be detected. The larger the power, the better the test detects spikes (abscissa represents the level of the test and the ordinate the probability to detect the spike). In these sets of experiments, we can note that 
\begin{itemize}
\item
{\it 
The testing procedure~$S^{\mathrm{Rice}}$ based on some Hessian and the whole process $X(\cdot)$ is uniformly better than the spacing test even if one takes very thin grids.}
\end{itemize}
 One can see that the power (the ability to detect sparse objects, Dirac masses here) of the grid methods seems to present a limit that is always improved by the continuous approach.

\begin{figure}[!h]
\includegraphics[width=4.5cm,height=4.5cm]{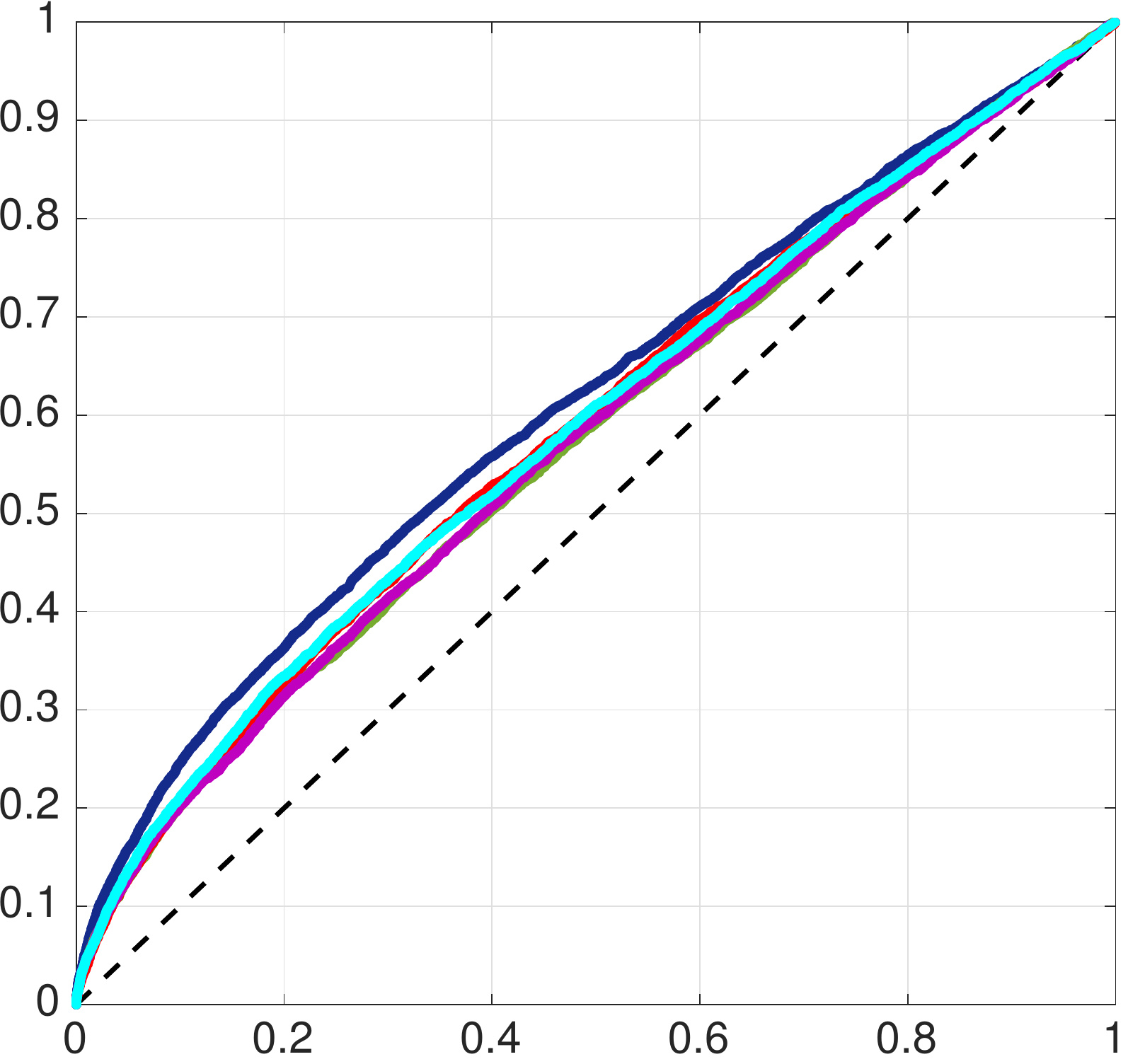}
\ \ \ \ \ 
\includegraphics[width=4.5cm,height=4.5cm]{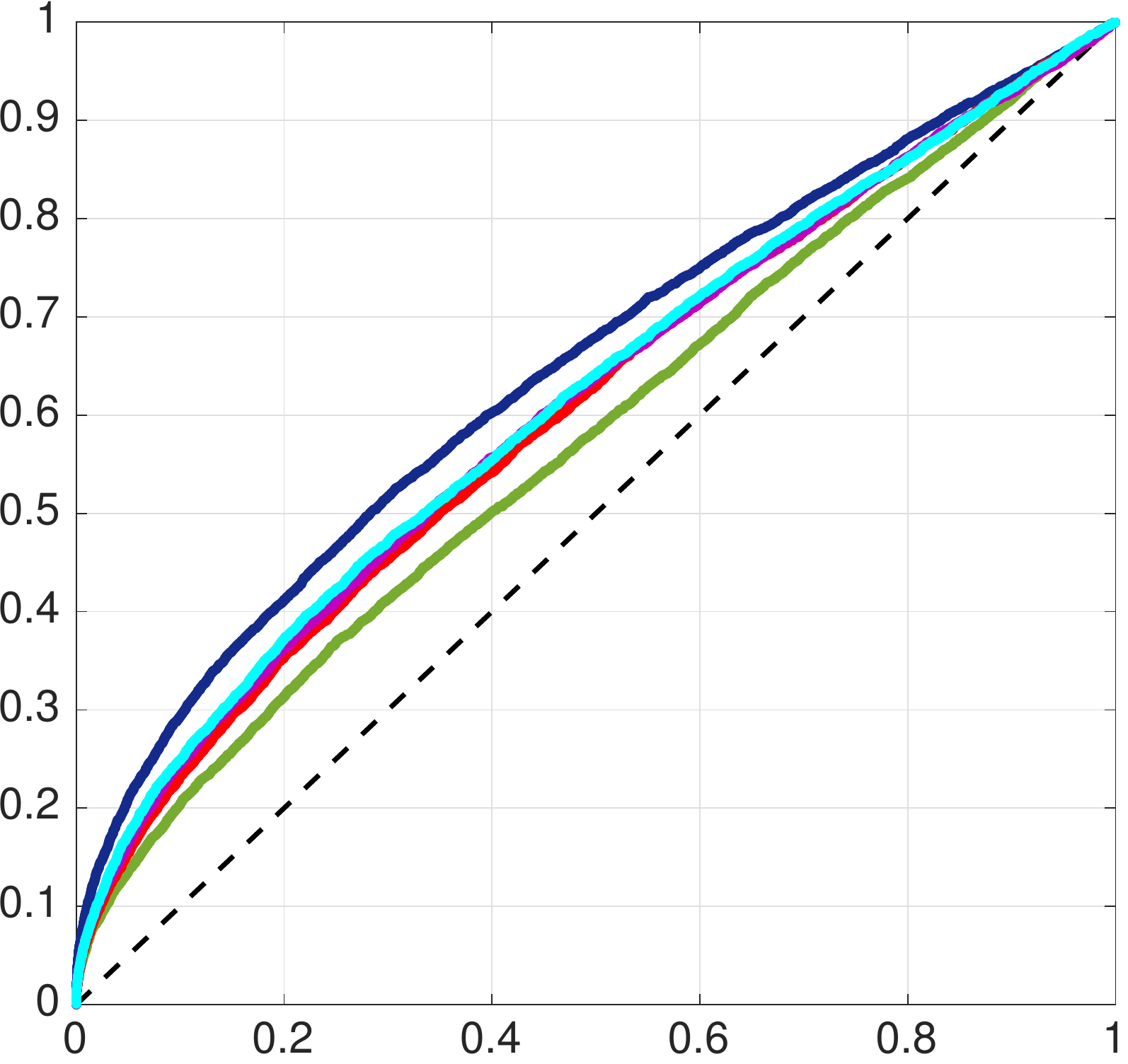}
\ \ \ \ \ 
\includegraphics[width=4.5cm,height=4.5cm]{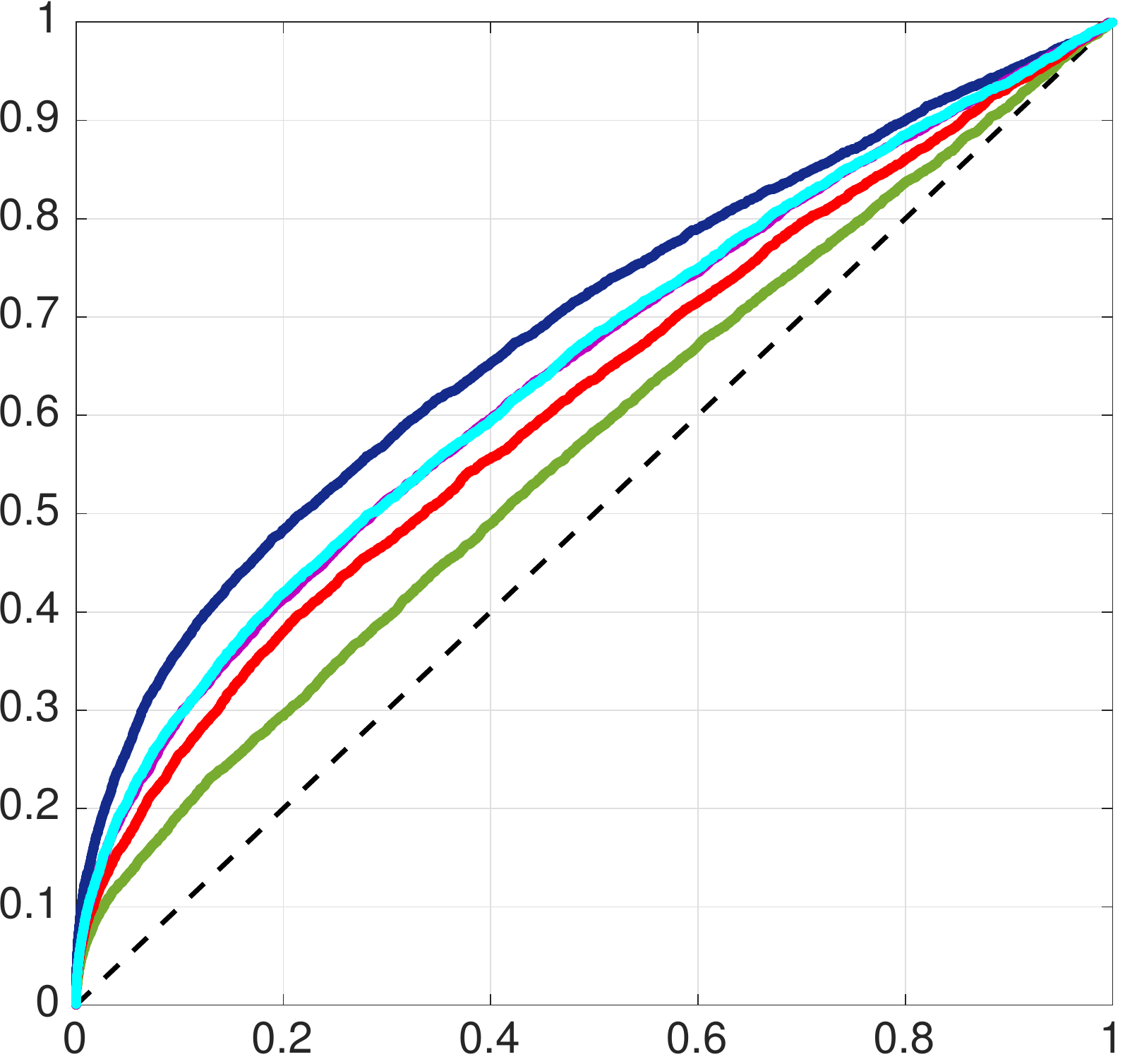} \\
\includegraphics[width=4.5cm,height=4.5cm]{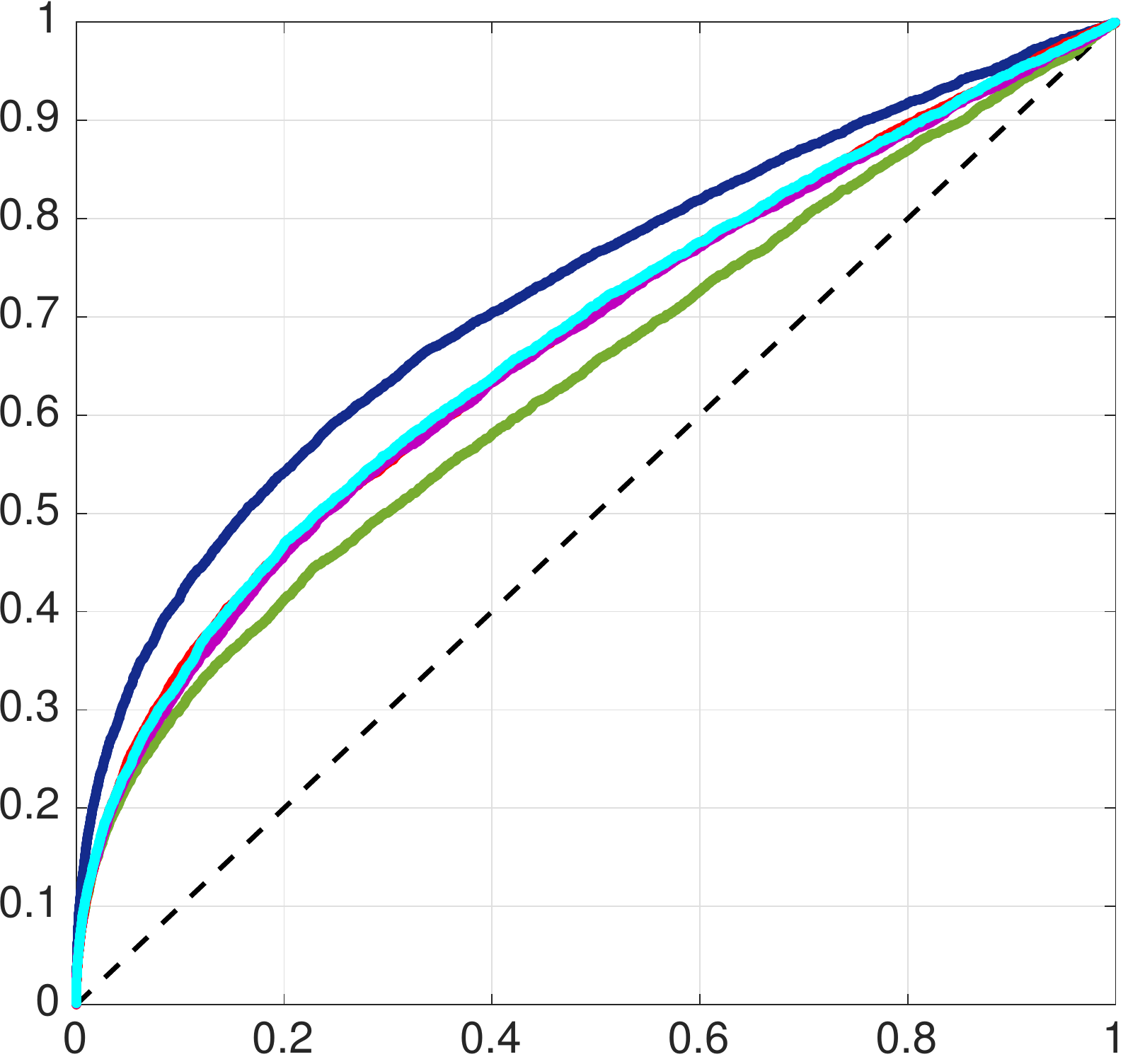}
\ \ \ \ \ 
\includegraphics[width=4.5cm,height=4.5cm]{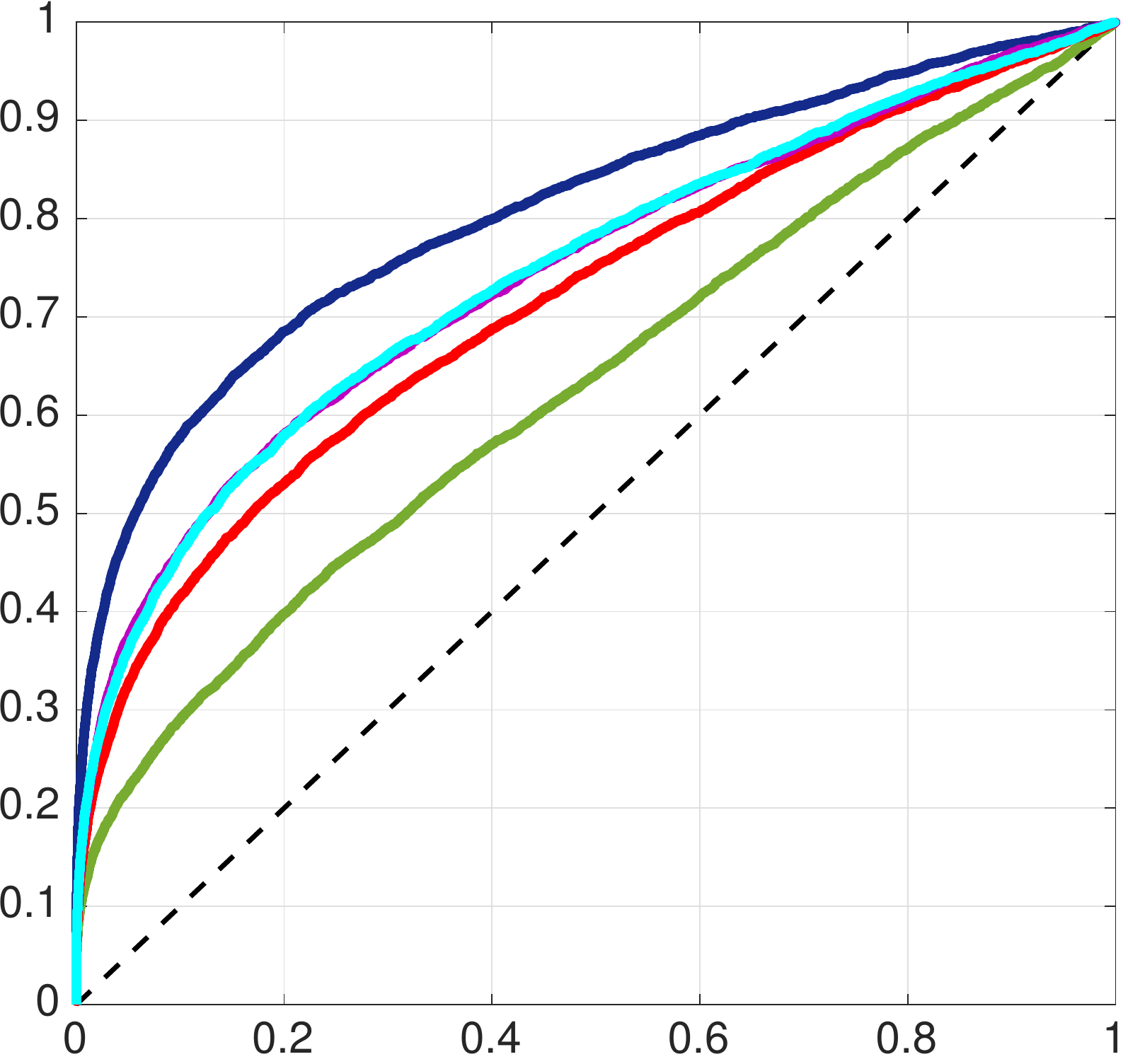}
\ \ \ \ \ 
\includegraphics[width=4.5cm,height=4.5cm]{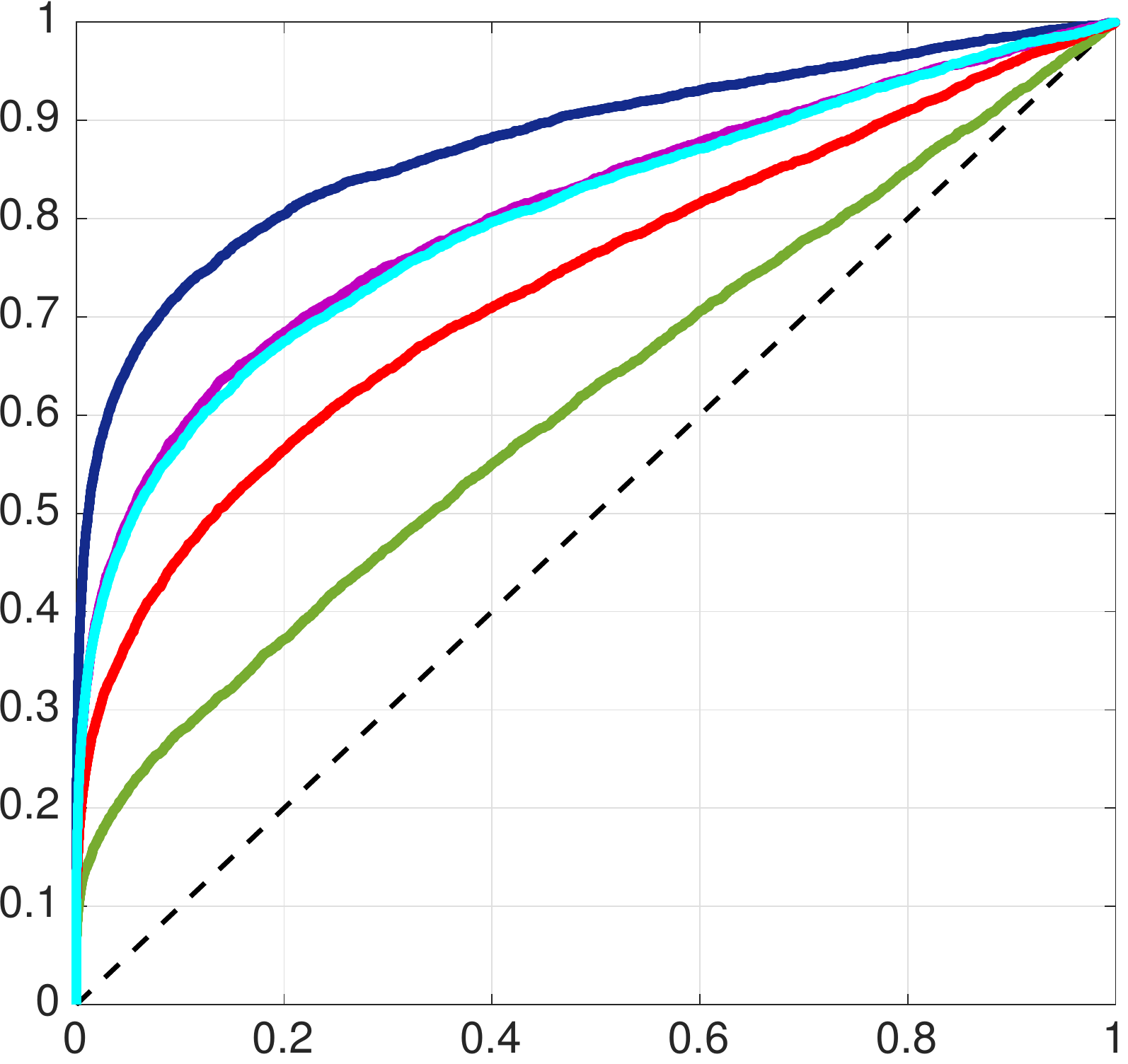}
\caption{{\bf [Under the alternative]}  Empirical   cumulative distribution {under the alternative} of the  Rice test (blue)  and the discrete grid tests  with size $3^2$  (green) , $10^2$(red),  $32^2$ (purple) and $50^2$ (cyan) . The alternative  is defined by a single atom  at a random  location  with a weight $ \log(N) = \log(2 f_c +1)$ (first row) or $\sqrt{N}$ (second row). In columns :  $f_c = 3,5,7$.}
\label{Compa_1mean}
\end{figure}

\subsection{Contribution}
For the first time, this paper paves the way to build new testing procedures in the framework of Super-Resolution theory and line spectral estimation. In particular, we prove that we can rightfully construct global null exact testing procedures on the first two {\it knots}~$\lambda_1$ and $\lambda_2$ of the ‘‘continuous'' LARS when one has a continuum of predictors, see Theorems \ref{thm:grid_known_variance} and~\ref{thm:rice_known_variance} and Figure~\ref{unbiaised_naive}. These two new procedures offer the ability to test the mean of any stationary Gaussian process with known correlation function~$\Gamma$ and $\mathcal C^2$-paths. Furthermore, one of these tests is unbiased, see Theorem \ref{thm:grid_known_variance} and they can be both studentized, see Theorems~\ref{t:t} and \ref{thm:rice_unknown_variance} and Figure~\ref{fig:student}, when variance~$\sigma^2$ is unknown. 

\subsection*{Outline}

\medskip

\hrule
\medskip

\contentsline {section}{\numberline {1}Introduction}{1}{section.1}
\contentsline {subsection}{\numberline {1.1}Grid-less spike detection through the \IeC {\textquoteleft }\IeC {\textquoteleft }continuous'' LARS}{1}{subsection.1.1}
\contentsline {subsection}{\numberline {1.2}A comparative study}{3}{subsection.1.2}
\contentsline {subsection}{\numberline {1.3}Contribution}{5}{subsection.1.3}
\newpage
\contentsline {section}{\numberline {2}The \IeC {\textquoteleft }\IeC {\textquoteleft }continuous'' LARS}{6}{section.2}
\contentsline {subsection}{\numberline {2.1}Cameron-Martin type Assumption on the mean}{6}{subsection.2.1}
\contentsline {subsection}{\numberline {2.2}Description of the \IeC {\textquoteleft }\IeC {\textquoteleft }continuous'' LARS}{8}{subsection.2.2}
\contentsline {subsection}{\numberline {2.3}Equivalent expression of the second knot}{12}{subsection.2.3}
\contentsline {subsection}{\numberline {2.4}Illustration: The two first knots of Super-Resolution}{12}{subsection.2.4}
\contentsline {section}{\numberline {3}Notations and problem formulation}{13}{section.3}
\contentsline {subsection}{\numberline {3.1}Hypothesis testing problem}{13}{subsection.3.1}
\contentsline {subsection}{\numberline {3.2}The first and second knots of a Gaussian process}{14}{subsection.3.2}
\contentsline {section}{\numberline {4}Passing to the limit, the grid approach}{14}{section.4}
\contentsline {section}{\numberline {5}The Rice method: a grid-less approach}{17}{section.5}
\contentsline {subsection}{\numberline {5.1}The known variance case}{17}{subsection.5.1}
\contentsline {subsection}{\numberline {5.2}The unknown variance case}{20}{subsection.5.2}
\contentsline {section}{\numberline {6}Applications to the Super-Resolution Theory }{25}{section.6}
\contentsline {subsection}{\numberline {6.1}Framework and results}{25}{subsection.6.1}
\contentsline {subsection}{\numberline {6.2}A numerical study}{27}{subsection.6.2}
\contentsline {section}{\numberline {A}Proofs}{30}{appendix.A}
\contentsline {subsection}{\numberline {A.1}Proof of Theorem~\ref {thm:grid_known_variance}}{31}{subsection.A.1}
\contentsline {subsection}{\numberline {A.2}Proof of Theorem \ref {t:t}}{34}{subsection.A.2}
\contentsline {subsection}{\numberline {A.3}Proof of Proposition~\ref {l:jm:l1}}{35}{subsection.A.3}
\contentsline {subsection}{\numberline {A.4}Proof of Proposition \ref {cor:rice_known_variance_blasso} and Proposition \ref {cor:rice_unknown_variance_blasso}}{36}{subsection.A.4}
\contentsline {section}{\numberline {B}Auxiliary results}{37}{appendix.B}
\contentsline {subsection}{\numberline {B.1}Regularity of $X^{|z}$ and new expression of $R(z)$}{37}{subsection.B.1}
\contentsline {subsection}{\numberline {B.2}Maximum of a continuous process}{37}{subsection.B.2}
\contentsline {section}{References}{38}{section*.7}

\medskip
\hrule
\bigskip
Notations and the formal problem formulation is described in Section~\ref{sec:prob_formulation}. In Section~\ref{s:asador}, we present the test statistic $S^{\mathrm{Grid}}$ which is constructed taking the limit of consecutive LARS knots~$(\lambda_{1,P},\lambda_{2,P})$ on thinner and thinner grids (namely the number of predictors $P$ tends to infinity). Section~\ref{s:riz} is the theoretical construction of our grid-less test based on consecutive knots~$(\lambda_{1},\lambda_{2})$ of the ‘‘continuous'' LARS. The main result concerning the test statistic~$S^{\mathrm{Rice}}$ is presented in this section. Applications to spike detection in Super-Resolution are developed in Section~\ref{sec:SR}. An appendix with the proofs can be found at the end of the paper.

The general construction of the ‘‘continuous'' LARS is given in Section~\ref{sec:LARS}. This section is independent from the rest of the paper. 

\section{The ‘‘continuous'' LARS}
\label{sec:LARS}

\subsection{Cameron-Martin type Assumption on the mean}
The algorithm presented here can be used for a large class of complex processes $Z$. We consider a complex-valued Gaussian process~$Z$ indexed on a compact metric space $\bbK$ with covariance function~$K$. 

\begin{remark}
Note that this model encompasses our to-be-announced-framework (see Section~\ref{sec:prob_formulation}) setting $\bbK=[0,2\pi)$ and $K=2\sigma^2\Gamma$ with~$\Gamma$ the correlation of $A_1$ defined in Section~\ref{sec:HypTestProb}. We do not assume that the process is stationary in this section.
\end{remark}

\noindent
We assume that its covariance $K$ is such that there exists $\sigma>0$ such that 
\eq
\label{eq:normK}
\forall s\neq t\in\bbK,\quad K(t,t)=2\sigma^2\ \mathrm{and}\ K(s,t)<2\sigma^2\,.
\qe
The scalar $2$ accounts for the contribution of the real and the imaginary part of $Z$ and $\sigma^2$ is the variance of the real part of $Z$. We assume that~$Z$ has continuous sample paths.

We present here the underlying hypothesis on the mean of the Gaussian processes under consideration when using the LARS algorithm. This hypothesis is of Cameron-Martin type. Indeed, the main drawback that should be avoided is when the mean cannot be represented in the RKHS of the Gaussian process $Z$. We recall that we can define a reproducing Hilbert space of the covariance~$K$, see \cite[Chapter 2.6]{gine2015mathematical} for instance. Denote $(\mathcal H,\langle\cdot,\cdot\rangle_{\mathcal H})$ this complex Hilbert space. Also, we can invoke a Karhunen-Loève expansion of the process $Z$. Namely, there exist i.i.d. complex standard normal variables~$(g_j)_{j\geq 1}$, a real orthonormal system $(e_j)_{j\geq 1}$ on $L^2(\bbK)$ and $\sigma_j>0$ such that
\[
Z-\bbE Z=\sum_j \sigma_j g_j e_j\quad\mathrm{and}\quad\sum_j\sigma_j^2=2\sigma^2<\infty,
\]
where the identity holds almost surely in the Banach space of continuous functions on $\bbK$ equipped with the $L^\infty$-norm. By Mercer's theorem, we know that
\[
\forall s,t\in\bbK,\quad K(s,t)=\sum_j\sigma_j^2\,e_j(s)e_j(t)\,,
\]
where the identity holds almost surely in the Banach space of continuous functions on $\bbK\times\bbK$ equipped with the $L^\infty$-norm. We recall also that the Hilbert space $\mathcal H$ can be defined as
\[
\mathcal H:=\Big\{\sum_j a_je_j\ |\ \sum_j\frac{|a_j|^2}{\sigma_j^2}<\infty\Big\}
\]
with the inner product
\[
 \Big\langle\sum_j a_je_j,\sum_j b_je_j\Big\rangle_{\mathcal H}=\sum_{j}\frac{a_j\bar b_j }{\sigma_j^2}\,.
\]
We observe $Z$ and we want to estimate its mean $\E Z$. Remark that almost surely it holds $Z-\E Z\in\overline{\mathcal H}$, where $\overline{\mathcal H}$ is the closure of ${\mathcal H}$ in the space of continuous functions equipped with the infinity norm, see {\it e.g.}  \cite[Corollary 2.6.11]{gine2015mathematical}. Remark that $\overline{\mathcal H}$ is also closed in $L^2(\bbK)$. Denoting by $E$ the $L^2$ orthogonal space of $\overline{\mathcal H}$, one has $L^2(\bbK)=\overline{\mathcal H}\oplus E$ where the sum is orthogonal. We denote by $\mathcal P$ (resp. $\mathcal P^\perp$) the orthogonal projection onto $\overline{\mathcal H}$ (resp.~$E$). Since almost surely $Z-\E Z\in\overline{\mathcal H}$, remark that almost surely $\mathcal P^\perp(Z)=\mathcal P^\perp(\bbE Z)$ and this process can be observed and is deterministic. Without loss of generality, we assume that $\mathcal P^\perp(\bbE Z)=0$ subtracting~$\mathcal P^\perp(Z)$ to $Z$. Also, we assume that 
\eq
\label{eq:hypLARS}
\mathcal P(\bbE Z)\in{\mathcal H}\,.
\qe
Recall that $\mathcal P(\bbE Z)=\bbE Z$ and Assumption~\eqref{eq:hypLARS} gives that $Z\in\overline{\mathcal H}$ using $Z-\E Z\in\overline{\mathcal H}$.

\subsection{Description of the ‘‘continuous'' LARS}
We assume that $Z\in\mathcal H$ and, as mentioned above, this assumption is equivalent to Assumption~\eqref{eq:hypLARS}. Following standard references, {\it e.g.,} \cite[Chapter 5.6]{hastie2015statistical}, the Least-Angle Regression Selection (LARS) algorithm can be extended to Gaussian processes. To the best of our knowledge, the LARS for complex Gaussian processes has never been introduced and we present its formulation here for the first time. Actually, the presentation given in this section can be applied to any RKHS setting. It results in a description of the LARS in infinite dimensional feature spaces and this framework has been dealt in \cite{rosset2007l1}. However, note that the paper  \cite{rosset2007l1} only concerns real signed measures and their ‘‘doubling'' dimension trick \cite[page 546]{rosset2007l1} cannot be used when dealing with complex measures. In particular, their result cannot be invoked in Super-Resolution where it is of utmost importance to deal with complex measures. This section presents the ‘‘continuous'' LARS for Super-Resolution.

The LARS is a variable selection algorithm giving a sequence $((\lambda_k,\mu_k))_{k\geq1}$ where the knots are ordered such that $\lambda_1\geq\lambda_2\geq\ldots>0$ and $\mu_k\in(\mathcal M(\bbK,\mathds C),\|\cdot\|_{1})$ is a complex-valued measure. {We recall that the space $(\mathcal M(\bbK,\mathds C),\|\cdot\|_{1})$ is defined as the dual space of the space of continuous functions on~$\bbK$ equipped with the $L^\infty$-norm. A pseudo-code is presented in Algorithm~\ref{alg:ContinuousLARS} and the technical details are presented below. When defining the ‘‘continuous LARS'', we assume that 
\eq
\label{ass:LARS}
\tag{$\bA_K$}
K \text{ is (at least) four times differentiable.}
\qe
Under this assumption, the process $Z$ is twice differentiable in quadratic mean and once differentiable almost surely.
{\tiny
\begin{algorithm}[t]
  \SetAlgoLined 
  \KwData{A correlation process $Z$ indexed by $\bbK$ and its variance-covariance function~$K$.} 
  \KwResult{A sequence $((\lambda_k,\mu_k))_{k\geq1}$ where the knots are ordered such that $\lambda_1\geq\lambda_2\geq\ldots>0$ and $\mu_k\in(\mathcal M(\bbK,\mathds C),\|\cdot\|_{1})$ is a complex-valued measure.}
    \BlankLine
    {\nonl \tcc{We initialize this Forward procedure computing $\lambda_1$ and $\mu_1$}}
    Set $k=1$,  $\displaystyle \lambda_1:=\max_{\bbK}|Z|$ and $\mu_1=0$.\\
       {\nonl \tcc{We use an ‘‘active set'' $\cA_k$ giving the support of the next solution $\mu_{k+1}$}}
      Set $\displaystyle t_1:=\arg\max_{\bbK}|Z|$ and $\cA_1=(t_1)$.\\
        {\nonl   \tcc{We use a ‘‘residual'' $Z_k$ initialized with}}
      Set $Z_1:=Z$.
          \BlankLine
      {\nonl \tcc{We iterate the next commands until a stopping criterion is met}}
      Set $k\leftarrow k+1$ {\tcc{$\cA_{k-1}=:(t_1,\ldots,t_{k-1})$ and $\lambda_{k-1}$ has been defined at the previous step.}}
      For $\lambda>0$ and $x=(x_1,\ldots,x_{k-1})\in\bbR^{k-1}$ define 
\begin{align*}
a(\lambda,x)&:=(K(x_i,x_j))^{-1}
\left(\begin{array}{c}Z(x_1)-(\lambda/ \lambda_{k-1})Z_{k-1}(t_1)
 \\\vdots \\
 Z(x_{k-1})-(\lambda/ \lambda_{k-1})Z_{k-1}(t_{k-1})\end{array}\right)\\
 h_j(\lambda,x)&:=\frac{\partial}{\partial t}\Big[\big|Z(t)-\sum_{i=1}^{k-1}a_i(\lambda,x)K(x_i,t)\big|^2\Big](x_j)
\end{align*}    
and solve $(h_1(\lambda,x),\ldots,h_{k-1}(\lambda,x))=0$ starting from $(\lambda,x)=(\lambda_{k-1},\cA_{k-1})$ for $0<\lambda\leq\lambda_{k-1}$. The solution path is denoted by $x(\lambda):=(t_1(\lambda),\ldots,t_{k-1}(\lambda))$.

          Set  $\displaystyle Z^{(\lambda)}(\cdot):=Z(\cdot)-\sum_{i=1}^{k-1}a_i(\lambda,x(\lambda))K(t_i(\lambda),\cdot)$ and  pick
\[
\lambda_{k}:=\max\big\{\beta>0\, ;\  \exists\, t\notin x(\beta),\ \mathrm{s.t.}\ |Z^{(\beta)}|(t)=\beta\big\}
\quad\mathrm{and}\quad t_k:=\argmax_{s\notin x(\lambda_k)} |Z^{(\lambda_k)}|(s)\,.
\]
   \\
Set $\cA_k=(t_1(\lambda_k),\ldots,t_{k-1}(\lambda_k),t_k)$ and 
\[
\mu_k=\sum_{i=1}^{k-1}a_i(\lambda_k,x(\lambda_k))\delta_{t_i(\lambda_k)}\text{ and }
Z_k(\cdot)=Z^{(\lambda_k)}(\cdot)=Z(\cdot)-\sum_{i=1}^{k-1}a_i(\lambda_k,x(\lambda_k))K(t_i(\lambda_k),\cdot)\,.
\]
\\
          Iterate from {\bf 4}.
          \BlankLine
\caption{Continous LARS}
\label{alg:ContinuousLARS}
\end{algorithm}
}

\subsubsection{The first knot}
Inspired by the Super-Resolution framework\textemdash presented in Section~\ref{sec:SR}, we consider $Z$ as some ‘‘correlation process'' in the spirit of \eqref{eq:CorrZSR}. In particular, the most correlated point can be defined by~\eqref{def:hatz}, namely
\[
\lambda_1:=\max_{t\in\bbK}|Z(t)|\,.
\]
Under Assumption~\eqref{eq:normK}, Proposition~\ref{prop:Unicity} shows that almost surely there exists a unique point~$t_1$ such that $\lambda_1=|Z(t_1)|$. Define the ‘‘active set'' function $\lambda\mapsto\mathcal A(\lambda)$ as 
\[
\mathcal A(\lambda_1)=\cA_1:=(t_1)\,,
\] 
and $\mathcal A(\lambda)=\emptyset$ for $\lambda>\lambda_1$. 
The path $\lambda\mapsto\cA(\lambda)$ for $\lambda\leq\lambda_1$ will be defined in the sequel. It is a piecewise continuously differentiable path representing the support of a discrete measure $\mu^{(\lambda)}$ such that
\[
||Z-\int_\bbK K(u,\cdot)\mathrm d\mu^{(\lambda)}(u)||_\infty\leq\lambda\,,
\]
namely the residual has $L^\infty$-norm less than $\lambda$. Set the first fitted solution to $\mu_1=0$ and the first residual to~$Z_1=Z$ for initialization purposes. Observe that 
\begin{align}
Z_1(t_1)&=\lambda_1\,e^{\imath \theta_1},\label{eq:t1}\\
|Z_1(t_1)|&=\lambda_1\,,\notag\\
\forall t\in\bbK,\quad Z_1(t)&=Z(t)-\int_\bbK K(u,t)\mu_1(\mathrm du)\,,\notag\\
\forall t\neq t_1,\quad |Z_1(t)|&<\lambda_1\,.\notag
\end{align}

\subsubsection{The second knot}

We want to add an other point $t_2$ to the active set and define a discrete measure $\mu_2$ supported on~$\mathcal A_1$ while keeping the above inequalities true. First, we solve the least-squares fit given by
\eq
\notag
a=\argmin_{c\in\bbC}\big\|Z_{1}(\cdot)-cK(t_1,\cdot)\big\|_{\mathcal H}^2\,.
\qe
This program can be solved in closed form and it holds that $a=Z(t_1)/(2\sigma^2)$. Then, for any $0<\lambda\leq\lambda_{1}$, define $Z^{(\lambda)}$ by
\begin{align*}
Z^{(\lambda)}(t)&=Z(t)+(\frac{\lambda}{\lambda_1}-1)Z(t_1)\frac{K(t_1,t)}{2\sigma^2}\,,
\end{align*}
and observe that $|Z^{(\lambda)}(t_1)|=\lambda$. Remark also that $|Z^{(\lambda)}|$ has a local maxima at point  $t=t_1$. Indeed, under \eqref{ass:LARS}, the function $X:(t,\theta)\mapsto\mathrm{Re}(e^{-\imath \theta}Z(t))$ is continuously differentiable and it has $t=t_1$ as global maximum by definition of $t_1$ and $\lambda_1$. Il follows from \eqref{eq:X_decompo} that $\hat z:=(t_1,\theta_1)$ is a local maxima of $X$ and therefore $t_1$ is a local maxima of $|Z^{(\lambda)}|$.

Now, we keep track of the largest value of the ‘‘correlation'' process $|Z^{(\lambda)}|$ on the complementary set of $\mathcal A_{1}$ while moving $\lambda$ from $\lambda_{1}$ toward zero. We define~$\lambda_{2}$ as the largest value for which there exists a point $t\notin\mathcal A_{1}$ such that $|Z^{(\lambda)}|(t)=\lambda$. Set 
\begin{align}
\lambda_{2}&:=\max\big\{\beta>0\, ;\  \exists\, t\notin\mathcal A_{1},\ \mathrm{s.t.}\ |Z^{(\beta)}|(t)=\beta\big\}\,,\notag\\
\mathrm{and}\quad t_2&:=\argmax_{s\notin\mathcal A_{1}} |Z^{(\lambda_2)}|(s)\,.\label{eq:t2}
\end{align}
If $t_2$ is not unique, we add all the solutions of \eqref{eq:t2} to the active set $\mathcal A_2$. For sake of readability, we assume that $t_2$ is the only solution to \eqref{eq:t2}. Then, update 
\begin{align*}
\cA(\lambda)&=\cA_1\quad \lambda_2<\lambda\leq\lambda_1\,,\\
\cA(\lambda_2)&=\mathcal A_2:=(t_1,t_2)\,,\\
\mu_2&=(1-\lambda_2/\lambda_{1})a\delta_{t_1}\,,\\
Z_2(\cdot)&=Z^{(\lambda_2)}(\cdot)=Z_1(\cdot)+(\lambda_2/\lambda_{1}-1)aK(t_1,\cdot)\,,
\end{align*}
where, for all $t\in\bbK$,
\begin{align*}
 Z_2(t)&=Z_{1}(t)+(\lambda_1/\lambda_{2}-1)aK(t_1,t)\,\\
  &=Z(t)-\int_\bbK K(u,t)\mu_{2}(\mathrm du)\,,
\end{align*}
is the second residual associated to the second fitted solution $\mu_2$. Remark also that
\begin{align*}
\forall t\in\{t_1,t_2\},\quad|Z_2(t)|&=\lambda_2\,,\\
\forall t\neq\{t_1,t_2\},\quad |Z_2(t)|&<\lambda_2\,.
\end{align*}

\subsubsection{The other Knots: Moving the Active Set between Knots}
From this point we proceed iteratively. For $k\geq3$, we assume that we have found $(\lambda_{k-1},\mu_{k-1})$ and $\mathcal A_{k-1}=(t_1,\ldots,t_{k-1})$ such that 
\begin{align}
\cA(\lambda_{k-1})&=\cA_{k-1}\,,\notag\\
\forall t\in\bbK,\quad Z_{k-1}(t)&:=Z(t)-\int_\bbK K(u,t)\mu_{k-1}(\mathrm du)\,,\notag\\
\forall t\in\mathcal  A_{k-1},\quad |Z_{k-1}(t)|&=\lambda_{k-1}\,,\notag\\
\forall t\notin\mathcal  A_{k-1},\quad |Z_{k-1}(t)|&<\lambda_{k-1}\,.\notag
\end{align}
We want to define the path $\lambda\mapsto\cA(\lambda)$ for values $\lambda\leq\lambda_{k-1}$ starting from $\cA(\lambda_{k-1})=\cA_{k-1}$. We look for a path $\cA(\lambda)=(t_1(\lambda),\ldots,t_{k-1}(\lambda))$ such that $t_i(\lambda)$ are continuously differentiable and there exists $\mu^{(\lambda)}$ supported on  $\cA(\lambda)$ such that the above inequalities hold true. This path will be defined on $(\lambda_k,\lambda_{k-1}]$ for a value $\lambda_k$ defined later.

\medskip

\noindent
$\circ\quad$ Consider $0<\lambda\leq\lambda_{k-1}$ and define
\[
a(\lambda)=M_{k-1}(\lambda)^{-1}
\left(\begin{array}{c}Z(t_1(\lambda))-(\lambda/ \lambda_{k-1})Z_{k-1}(t_1(\lambda_{k-1}))
 \\\vdots \\Z(t_{k-1}(\lambda))-(\lambda/ \lambda_{k-1})Z_{k-1}(t_{k-1}(\lambda_{k-1}))\end{array}\right)
 \]
where we denote $M_{k-1}(\lambda)=(K(t_i(\lambda),t_j(\lambda)))_{1\leq i,j\leq k-1}$ and we assume that $M_{k-1}(\lambda)$ is invertible. If $M_{k-1}(\lambda)$ is not invertible then we stop. The path $\cA(\lambda)=(t_1(\lambda),\ldots,t_{k-1}(\lambda))$ will be defined later on. Note that $\cA(\lambda_{k-1})=(t_1(\lambda_{k-1}),\ldots,t_{k-1}(\lambda_{k-1}))=\cA_{k-1}$ for $\lambda=\lambda_{k-1}$.

\begin{remark}
Note that the function $\sum_{i=1}^{k-1}a_i(\lambda_{k-1})K(t_i(\lambda_{k-1}),\cdot)$ is the regression of $Z$ onto the finite dimensional space $\mathrm{Span}\{K(t_i(\lambda_{k-1}),\cdot)\, ;\ i=1,\ldots,k-1\}$.
\end{remark}

\noindent
$\circ\quad$ Then, for any $0<\lambda\leq\lambda_{k-1}$, define 
\begin{align*}
\mu^{(\lambda)}&:=\sum_{i=1}^{k-1}a_i(\lambda)\delta_{t_i(\lambda)}\,,\\
Z^{(\lambda)}(\cdot)&:=Z(\cdot)-\sum_{i=1}^{k-1}a_i(\lambda)K(t_i(\lambda),\cdot)\,.
\end{align*}
and observe that $|Z^{(\lambda)}(t)|=\lambda$ for all $t\in\{t_1(\lambda),\ldots,t_{k-1}(\lambda)\}$. Indeed, it holds
\begin{align*}
Z^{(\lambda)}(t_j(\lambda))&=Z(t_j(\lambda))-\sum_{i=1}^{k-1}a_i(\lambda)K(t_i(\lambda),t_j(\lambda))\,,\\
&=Z(t_j(\lambda))-a(\lambda)^\top (M_{k-1}(\lambda))(0,\ldots,0,\underbrace{1}_{j\mathrm{th}},0,\ldots,0)\,,\\
&=Z(t_j(\lambda))-Z(t_j(\lambda))+(\lambda/ \lambda_{k-1})Z_{k-1}(t_j(\lambda_{k-1}))\,,\\
&=\lambda Z_{k-1}(t_j(\lambda_{k-1}))/\lambda_{k-1}\,,
\end{align*}
and recall that it holds $|Z_{k-1}(t_j(\lambda_{k-1}))|=\lambda_{k-1}$.

We will enforce that $t_j(\lambda)$ is a local maximum of $|Z^{(\lambda)}|$ imposing that its derivative is zero along the path $\cA(\lambda)$ for $\lambda_k<\lambda\leq\lambda_{k-1}$. This can be done invoking the implicit function theorem as follows. Define for $\lambda>0$ and $x=(x_1,\ldots,x_{k-1})\in\bbR^{k-1}$
\[
F(\lambda,x):=(h_1(\lambda,x),\ldots,h_{k-1}(\lambda,x))
\]
where
\begin{align}
a(\lambda,x)&:=(K(x_i,x_j))^{-1}
\left(\begin{array}{c}Z(x_1)-(\lambda/ \lambda_{k-1})Z_{k-1}(t_1(\lambda_{k-1}))
 \\\vdots \\
 Z(x_{k-1})-(\lambda/ \lambda_{k-1})Z_{k-1}(t_{k-1}(\lambda_{k-1}))\end{array}\right)\notag\\
 h_j(\lambda,x)&:=\frac{\partial}{\partial t}\Big[\big|Z(t)-\sum_{i=1}^{k-1}a_i(\lambda,x)K(x_i,t)\big|^2\Big](x_j)
 \label{eq:zero_derivative}
\end{align}
Assume that the jacobian $\frac{\partial F}{\partial x}$ is invertible. If $\frac{\partial F}{\partial x}$ is not invertible then we stop. Therefore, the implicit function theorem implies that there exists a continuously differentiable path $x(\lambda):=(t_1(\lambda),\ldots,t_{k_1}(\lambda))$ such that $F(\lambda,x)=0$ is equivalent to $x=(t_1(\lambda),\ldots,t_{k_1}(\lambda))$ on a neighborhood of $\lambda=\lambda_{k-1}$. On this path, the derivative of $t\mapsto|Z^{(\lambda)}|^2(t)$ at points $t=t_j(\lambda)$ is zero (thanks to \eqref{eq:zero_derivative}) while $|Z^{(\lambda)}|(t_j(\lambda))=\lambda$. We deduce that there exists a neighborhood of~$\lambda_{k-1}$ on which each point $t_j(\lambda)$ is a local maximum of $|Z^{(\lambda)}|$. 

Now, we keep track of the largest value of the ‘‘correlation'' process $|Z^{(\lambda)}|$ on the complementary set of $\mathcal A(\lambda)$ while moving $\lambda$ from $\lambda_{k-1}$ toward zero. We define~$\lambda_{k}$ as the largest value for which there exists a point $t\notin\mathcal A(\lambda)$ such that $|Z^{(\lambda)}|(t)=\lambda$. Set 
\begin{align}
\lambda_{k}&:=\max\big\{\beta>0\, ;\  \exists\, t\notin\mathcal A(\beta),\ \mathrm{s.t.}\ |Z^{(\beta)}|(t)=\beta\big\}\,,\notag\\
\mathrm{and}\quad t_k&:=\argmax_{s\notin\{t_1(\lambda_k),\ldots,t_{k-1}(\lambda_k)\}} |Z^{(\lambda_k)}|(s)\,.\label{eq:tk}
\end{align}
If $t_k$ is not unique, we add all the solutions of \eqref{eq:tk} to the active set $\mathcal A_k$. For sake of readability, we assume that $t_k$ is the only solution to \eqref{eq:tk}.

\medskip

\noindent
$\circ\quad$ Update 
\begin{align*}
\cA(\lambda)&=(t_1(\lambda_k),\ldots,t_{k-1}(\lambda_k))\quad \lambda_k<\lambda\leq\lambda_{k-1}\,,\\
\cA(\lambda_k)&=\mathcal A_k:=(t_1(\lambda_k),\ldots,t_{k-1}(\lambda_k),t_k)\,,\\
\mu_k&=\mu^{(\lambda_k)}=\sum_{i=1}^{k-1}a_i(\lambda_k)\delta_{t_i(\lambda_k)}\,,\\
Z_k(\cdot)&=Z^{(\lambda_k)}(\cdot)=Z(\cdot)-\sum_{i=1}^{k-1}a_i(\lambda_k)K(t_i(\lambda_k),\cdot)\,,
\end{align*}
where, for all $t\in\bbK$,
\begin{align*}
 Z_k(t)
  &=Z(t)-\int_\bbK K(u,t)\mathrm d\mu_{k}(u)\,,
\end{align*}
is the $k$th residual associated to the $k$th fitted solution $\mu_k$. Remark also that
\begin{align*}
\forall t\in\{t_1,\ldots,t_{k}\},\quad|Z_k(t)|&=\lambda_k\,,\\
\forall t\neq\{t_1,\ldots,t_{k}\},\quad |Z_k(t)|&<\lambda_k\,,
\end{align*}
and update $k$ to $k+1$ to iterate the procedure.

\subsection{Equivalent expression of the second knot}
\label{sec:second_knot}
First, observe that $\lambda_1$ is defined as in \eqref{def:hatz} and that the two definitions agree. Indeed, recall that $X(t,\theta)=\mathrm{Re}\,(e^{-\imath\theta}Z(t))$ so that $\max X=\max|Z|$ at point $\hat z=(t_1,\theta_1)$ with $t_1$ as in \eqref{eq:t1}. By optimality, it holds that  $\lambda_1=e^{-\imath\theta_1}Z(t_1)$.

Then, the case $k=2$ is interesting since $\lambda_2$ is a statistic used in the test statistics described in the sequel. We will see that the two definitions agree here again, please refer to Section~\ref{sec:prob_formulation} for notations. For $k=2$, it holds $Z_1=Z$ and the least squares direction $a$ is given by $a=Z(t_1)/(2\sigma^2)$ and $Z^{(\lambda)}$ by
\begin{align*}
Z^{(\lambda)}(t)&=Z(t)+(\frac{\lambda}{\lambda_1}-1)Z(t_1)\frac{K(t_1,t)}{2\sigma^2}\,,\\
&=Z(t)+e^{\imath\theta_1}({\lambda}-\lambda_1)\frac{K(t_1,t)}{2\sigma^2}
\end{align*}
Multiplying by $e^{-\imath\theta}$ and taking the real part, this latter can be equivalently written as
\[
\mathrm{Re}\,(e^{-\imath\theta}Z^{(\lambda)}(t))=X(z)+({\lambda}-{\lambda_1})\cos(\theta_1-\theta)\frac{K(t_1,t)}{2\sigma^2}\,,
\]
where $z=(t,\theta)\in\bbT$. Now, recall that $\rho(t,\theta):=\Gamma(t)\cos\theta=\cos(\theta){K(0,t)}/({2\sigma^2})$ to compute
\eq
\label{eq:X_decompo}
\mathrm{Re}\,(e^{-\imath\theta}Z^{(\lambda)}(t))=X(z)+({\lambda}-{\lambda_1})\rho(z-\hat z)\,.
\qe
We deduce that
\begin{align*}
\mathrm{Re}\,(e^{-\imath\theta}Z^{(\lambda)}(t))\leq\lambda&\Leftrightarrow X(z)-{\lambda_1}\rho(z-\hat z)\leq\lambda(1-\rho(z-\hat z))\\
&\Leftrightarrow \frac{X(z)-X(\hat z)\rho(z-\hat z)}{1-\rho(z-\hat z)}\leq\lambda\\
&\Leftrightarrow X^{\hat z}(z)\leq\lambda
\end{align*}
showing that the second knot $\lambda_2$ is exactly the quantity defined in~\eqref{eq:second_knot}.

\subsection{Illustration: The two first knots of Super-Resolution}
\label{sec:SRLARS}

The Super-Resolution process is defined in \eqref{eq:CorrZSR}. It satisfies Condition~\eqref{a:KLN} and Condition~\eqref{a:nonDegeneratefiniteComplex} of Section~\ref{sec:VarianceEstimation} with $N= 2f_c +1$. The first point is given by the maximum of the modulus of $Z$, see the red curve in Figure~\ref{fig:LARS_SR}. Observe that $Z_1=Z$ and the maximum satisfies $Z_1(t_1)=\lambda_1\,e^{\imath \theta_1}$. Then, we compute 
\[
Z^{(\lambda)}(t)=Z_1(t)+(\frac{\lambda}{\lambda_1}-1)Z_1(t_1)\frac{{\mathbf D_{N}(t_1-t)}}{2N\sigma^2}\,,
\]
where $\mathbf D_{N}$ denotes the Dirichlet kernel. For $\lambda>\lambda_2$, the maximum of $|Z^{(\lambda)}|$ is achieved at a unique point, namely $t_1$. For $\lambda=\lambda_2$, a second point achieves the maximum. This transition defines $Z_2:=Z^{(\lambda_2)}$, see Figure~\ref{fig:LARS_SR}.

\noindent
From this point, we can iterate fitting the least squares direction on the support $\{t_1,t_2\}$ and decreasing~$|Z_2|$ while a third point achieves the maximum. Given the red curve in Figure~\ref{fig:LARS_SR}, it was not obvious that the second knot would have been~$t_2$ since other local maxima seemed more significant than $t_2$ on the red curve.

\section{Notations and problem formulation}
\label{sec:prob_formulation}
\subsection{Hypothesis testing problem}
\label{sec:HypTestProb}
In this paper, our purpose is to test the mean value of a stationary complex-valued Gaussian process $Z$  with $\mathcal C^2$-paths indexed by~$[0,2\pi)$. We assume that $Z=A_1+\imath A_2$ where~$A_1$ and $A_2 $ are two independent and identically distributed real-valued processes  with $\mathcal C^2$-paths. Assume that the correlation function~$\Gamma$ of~$A_1$ (and $A_2$) satisfies
\eq
\label{e:Normalization}
\tag{$\mathbf A_{\mathrm{norm}}$} 
\forall t\in(0,2\pi),\quad 
|\Gamma(t)|<1 \,
\qe
and let $\sigma^2 := \bbV\!\mathrm{ar}(A_1(\cdot))$ so that 
\eq
\label{eq:variance}
\bbC\mathrm{ov}(A_1(s),A_1(t)) = \sigma^2 \Gamma(t-s)\,.
\qe 
We denote by $\bbT:=[0,2\pi)^2$ the $2$-dimensional torus. Assume that we observe a real-valued process~$(X(z))_{z\in\bbT}$ indexed by $\bbT$ such that 
\[
\forall z \in\bbT,\quad X(z):=A_1(t)\cos\theta+A_2(t)\sin\theta=\mathrm{Re}\big(e^{-\imath\theta}Z(t)\big)\,,
\]
where $z = (t,\theta)$ and $\mathrm{Re}(\cdot)$ denotes the real part of a complex number. Remark that observing~$X$ is equivalent to observe $Z$ since we can recover $Z$ from $X$ and conversely. Furthermore, we may assume that the process $(X(z))_{z\in\bbT}$ satisfies 
\eq
\label{e:NonDegenerated}
\tag{$\mathbf  A_{\mathrm{degen}}$} 
  \mbox{a.s.  there is no point }z\in\bbT \mbox{ s.t.  }X'(z) =0\ \mbox{and}\ \det(X''(z)) =0,
\qe
where $X'(z)$ and $X''(z)$ denote the gradient and the Hessian of $X$ at point $z$. Note that sufficient conditions for \eqref{e:NonDegenerated} are given by \cite[Proposition 6.5]{Azais_Wschebor_09} applied to $(X(z))_{z\in\bbT}$. In particular if the distribution of $X''(t)$ is non degenerated, using  \cite[Condition (b) of Proposition~6.5]{Azais_Wschebor_09}, it implies that Assumption~\eqref{e:NonDegenerated}  is met. Note also that Assumption~\eqref{e:NonDegenerated} is referred to as ‘‘Morse'' process in \cite{adler2009random}. Remark that \eqref{e:Normalization} and \eqref{e:NonDegenerated} are mild assumptions ensuring that $Z$ is a non-pathological process with $\mathcal C^2$-paths.

\medskip

\noindent
This paper aims at testing the following hypotheses:
\eq
\notag
\mathds H_0: \mbox{``}Z\text{\ is centered}\,\mbox{''} \quad\text{against}\quad  \mathds H_1: \mbox{``}Z\text{\ is not centered}\,\mbox{''}\,.
\qe
Remark that this framework encompasses any testing problem whose null hypothesis is a single hypothesis on the mean of~$Z$, subtracting the mean tested by the null hypothesis. Indeed, remark that $Z$ can always be decomposed into 
\[
Z = Z^0 + \eta\,,
\]
where $Z^0=\bbE Z$ is the deterministic noiseless response and $\eta$ is some centered random additive perturbation of $Z^0$. Given any function $f^0$, one might be interested in testing wether $Z^0=f^0$ or equivalently $Z-f^0$ is centered. Not rejecting this hypothesis means that there is no evidence that the residual $Z-f^0$ is not centered. On the other hand, rejecting the null means that the testing procedure have found some evidence that one should not consider that the residual $Z-f^0$ is centered. Now, the same discussion can be made for $X$ remarking that
\[
X(z)=\mathrm{Re}\big(e^{-\imath\theta}Z(t)\big)=X^0(z)+N(z)
\]
where we denote by $X^0(z):=\mathrm{Re}\big(e^{-\imath\theta}Z^0(t)\big)$ the deterministic noiseless response part and by $N(z):=\mathrm{Re}\big(e^{-\imath\theta}\eta(t)\big)$  some centered random additive perturbation of $X^0$.

 \subsection{The first and second knots of a Gaussian process} 
As in high-dimensional statistics, we can define the first and second knots~$(\lambda_1,\lambda_2)$ as follows. If we model some spatial correlation by means of the process $X$, the most correlated point $\widehat z\in\bbT$ and the maximal correlation $\lambda_1$ are respectively the argument maximum and the maximum of $X$ defined by 
\eq
\label{def:hatz}
\widehat z:=\argmax_{z\in\bbT}X(z)\quad\mathrm{and}\quad\lambda_1:=X(\widehat z)\,.
\qe
Under Assumption \eqref{e:Normalization}, one can check that the argument maximum is almost surely a singleton, see~Proposition~\ref{prop:Unicity}.

To construct the second knot, given a fixed $z\in\bbT$, one can equivalently consider two regressions of~$X(y )$, as follows.

 $\bullet$ On the one hand, the regression on $X(z)$ that will appear in the grid method of Section~\ref{s:asador}. Using a  convenient normalisation related to the definition  of the LARS knots, we set 
 
 \[
\forall y\in\bbT\setminus\{z\},\quad X^{z}(y):=\frac{X(y)-X(z)\rho( z-y)}{1-\rho(z-y)}=X(z)+\frac{X(y)-X(z)}{1-\rho(z-y)}\,,
\]
where 
\[
\forall z\in\bbT,\quad
\rho(z):=\Gamma(t)\cos\theta\,,
\] is the correlation function of the stationary Gaussian process~$X$. One can check that $X^{z}$ is a Gaussian process indexed by $\bbT\setminus\{z\}$ and independent of $X(z)$.  
 
  $\bullet$ On the other hand, the regression on $(X(z),X'(z))$  will be needed for convergence purposes in Section~\ref{s:riz}. With the convenient normalization, we set 
  \[
\forall y\in\bbT\setminus\{z\},\quad  X^{|z}(y) :=\frac{ X(y) -\rho(z-y) X(z) +  \langle \rho'(z-y) , \widetilde \Lambda^{-1} X'(z) \rangle }{1-\rho(z-y) }.
\] 
where $\rho'$ is the gradient of the correlation function $\rho$ and $\widetilde{\Lambda} := -\rho''(0)$ is the variance-covariance matrix of the derivative process of $X$, namely $X'$. \bigskip

Since the derivative at $\widehat z$ is zero, note that $X^{\hat{z}}(\cdot) = X^{|\hat{z}}(\cdot)$ and we define the second knot~$\lambda_2$~as 
\eq
\label{eq:second_knot}
\widehat y:=\argmax_{y\in\bbT\setminus\{\widehat z\}}X^{\widehat z}(y)\quad\mathrm{and}\quad\lambda_2:=X^{\widehat z}(\widehat y) = X^{| \widehat z}(\widehat{y})\,,
\qe
where we prove that $(\widehat y,\lambda_2)$ are well defined and that $\widehat y$ is almost surely unique, see Proposition~\ref{prop:Unicity} and Remark~\ref{rem:Pumping}. Furthermore, the couple $(\widehat y,\lambda_2)$ can be equivalently defined using the extension of the LARS to our framework, the interested reader may consult Section~\ref{sec:second_knot}.

\section{Passing to the limit, the grid approach} \label{s:asador}

The main idea of this section is to define a sequence of grids $(G_n)_{n\geq1}$ on $\bbT$,  to construct a sequence of test statistics $(S_n)_{n\geq1}$ from the values of the process $X$ on $G_n$  as in \cite{ADCM16} and to pass to the limit as $n \to \infty$. More precisely, we consider~$G_n $ to be the grid with  mesh $\Delta_n:=(2\pi) 2^{-n}$ on $ \bbT$ (corresponding to $P=2^{2n}$ grid points so that $n=(\log_2 P)/2$),
 \[
 \widehat z_n := \argmax _{z \in G_n}  X(z)\quad \mathrm{and}\quad \lambda_{1,n}  :=\max _{z \in G_n}  X(z)\,.
 \]
 It is the maximum of the process $X$ when indexing by the grid. We can also define the maximum of the regression when indexing by the grid, namely
 \[
 \lambda_{2,n}  :=\max _{y\in G_n\setminus\{\widehat z_n\}}  X^{\widehat z_n}(y)\,.
 \]
The Hessian at the maximum \eqref{def:hatz} on $\bbT$ is denoted by $X'':=X''(\widehat z)$ (in particular it does not depend on the grid but on the maximum $\hat z$ of $X$). By Assumption~\eqref{e:NonDegenerated}, it is a random variable with values in the set of non degenerated negative definite matrices of size $2\times 2$.  We can define a non degenerated positive quadratic form (\textit{i.e.,} a metric) on~$\bbR^2$ by $\|v\|_{X''} = -v^\top X''v$, for $v\in\bbR^2$. Using this metric, we can consider the corresponding Voronoi tessellation of~$\bbZ ^2$. It is a regular partition of $\bbR^2$ by parallelograms, invariant by translations~$(1,0)$ and $(0,1)$. Denote by $V_\mathrm{o}\subset[-1,1]^2$  the Voronoi cell of the origin in this partition and by $\mathcal U:=\mathcal U(V_\mathrm o)$ the uniform distribution on this cell. We understand the law $\mathcal U$ as a conditional law with respect to $X''$ and, conditionally to $X''$, this law is taken independent of $(\lambda_1,\lambda_2)$, see Lemma~\ref{lem:z1}. Conditionally to $X''$, define the randomized statistics
\eq
\label{eq:limit_l2}
  \overline \lambda_2 :=
  \lambda_2 \vee \Bigg\{\lambda_1+\sup _{k \in\bbZ^2\setminus\{0\}} \frac{k^\top}{ \| {\widetilde \Lambda}^\frac12 k\|} X'' \Big(\frac{ k-2 \mathcal{U }}{\|{\widetilde \Lambda}^\frac12 k\|}\Big)\Bigg\}\,,
\qe
where ${\widetilde \Lambda}^\frac12$ is the square root of $\widetilde \Lambda=-\rho''(0)$ and $a\vee b=\max(a,b)$. A proof of the following result is given in Appendix~\ref{proof:grid_known_variance}.

\begin{remark}
Remark that we have taken dyadic grids here. Following the proof in Appendix~\ref{proof:grid_known_variance}, one can exhibit how $\bar\lambda_2$ depend on the sequence of grids. The key result is Lemma~\ref{lem:z1} and we borrow its notation in this remark. In the general case where one consider a different type of sequence of grids, one still have independence between $( \widehat z- \overline{z}_n)$ and $(\lambda_1,\lambda_2)$ but the law of the limit of $\Delta_n^{-1}( \widehat z- \overline{z}_n)$ (for some $\Delta_n$ that may depend on the grid sequence) may differ from $\cU$. We refer to this law (if it exists) as $\cV_k$ where $k\in\bbZ^2$. The dependence in $k$ depicts the fact cells defined by joining adjacent points of the grid might be topologically different (which is not the case in the dyadic case). It results that the definition of $\bar\lambda_2$ should be modified changing $\cU$ by $\cV_k$. It does not change the main message here: the resulting test is randomized and \eqref{e:naive} is non-conservative and should be avoided in pratice.
\end{remark}

\begin{theorem}
   \label{thm:grid_known_variance}
   
 Under~$\bbH_0$, Assumptions \eqref{e:Normalization} and \eqref{e:NonDegenerated}, the randomized test statistics 
 \[
 S^{\mathrm{Grid}}:= \frac{\overline \Phi(\lambda_1/\sigma)}{\overline \Phi ( \overline\lambda_2/\sigma)} \sim\mathcal U([0,1])\,,
 \] 
 where $\overline\Phi$ denotes the standard Gaussian survival function. Moreover, the test with $p$-value $S^{\mathrm{Grid}}$ is unbiased: under the alternative $\bbH_1$, it holds  $ \bbP \{S^{\mathrm{Grid}} \leqslant  \alpha\} \geqslant \alpha$ for all $\alpha\in(0,1)$.
   \end{theorem}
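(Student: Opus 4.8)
The plan is to reduce everything to the known spacing-test identity in a finite grid and then pass to the limit. First I would recall from \cite{ADCM16} (or re-derive it) that, for each fixed grid $G_n$, conditionally on the identity of the argmax point $\widehat z_n$, the random variable $\lambda_{1,n}/\sigma$ and the process $X^{\widehat z_n}(\cdot)/\sigma$ restricted to $G_n\setminus\{\widehat z_n\}$ are independent Gaussians (this is the crucial "independence of the regression" statement, already recorded in Section~\ref{sec:prob_formulation} for the continuum), so that the truncated-Gaussian mechanism gives exactly
\[
\frac{\overline\Phi(\lambda_{1,n}/\sigma)}{\overline\Phi(\lambda_{2,n}/\sigma)}\sim\mathcal U([0,1])
\]
under $\bbH_0$, for every $n$. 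Since this law does not depend on $n$, it suffices to show that the pair $(\lambda_{1,n},\lambda_{2,n})$ converges almost surely (or in distribution, jointly) to $(\lambda_1,\overline\lambda_2)$ as $n\to\infty$, and then invoke the continuous mapping theorem together with continuity of $\overline\Phi$ and the fact that $\overline\Phi(\overline\lambda_2/\sigma)>0$ a.s.

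The convergence $\lambda_{1,n}\to\lambda_1$ is the easy part: $X$ has continuous paths and $G_n$ becomes dense, so $\max_{G_n}X\to\max_\bbT X$, and by Assumption~\eqref{e:Normalization} and Proposition~\ref{prop:Unicity} the continuum argmax $\widehat z$ is a.s. unique, forcing $\widehat z_n\to\widehat z$. The heart of the proof is the behaviour of $\lambda_{2,n}$. Here I would split $G_n\setminus\{\widehat z_n\}$ into a "far" region (points at macroscopic distance from $\widehat z_n$) and a "near" region (points within $O(\Delta_n)$ of $\widehat z_n$). On the far region, $X^{\widehat z_n}(y)\to X^{\widehat z}(y)$ uniformly and the denominator $1-\rho(\widehat z_n-y)$ stays bounded away from $0$, so the far-region maximum converges to $\lambda_2=\max_{y\neq\widehat z}X^{\widehat z}(y)$. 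The near region is where the randomization enters: for $y=\widehat z_n+\Delta_n k + (\text{offset})$ with $k\in\bbZ^2\setminus\{0\}$, a second-order Taylor expansion of $X$ around $\widehat z$ (using $X'(\widehat z)=0$, so $X(y)-X(\widehat z)\approx\tfrac12(y-\widehat z)^\top X''(y-\widehat z)$) combined with the analogous expansion $1-\rho(\widehat z-y)\approx\tfrac12(y-\widehat z)^\top\widetilde\Lambda(y-\widehat z)$ shows that
\[
X^{\widehat z_n}(y)-\lambda_{1,n}\;\longrightarrow\;\frac{k^\top}{\|\widetilde\Lambda^{1/2}k\|}\,X''\!\Big(\frac{k-2\mathcal U}{\|\widetilde\Lambda^{1/2}k\|}\Big),
\]
where the offset of $\widehat z_n$ inside its grid cell, rescaled by $\Delta_n^{-1}$, converges to the uniform law $\mathcal U$ on the Voronoi cell $V_\mathrm o$ and is asymptotically independent of $(\lambda_1,\lambda_2)$; this last fact is exactly Lemma~\ref{lem:z1}. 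Taking the sup over $k\in\bbZ^2\setminus\{0\}$ of these near-region contributions and the max with the far-region limit $\lambda_2$ produces precisely $\overline\lambda_2$ as defined in~\eqref{eq:limit_l2}.

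The main obstacle is making the near/far decomposition rigorous and uniform: one must check that the contributions from intermediate scales (distance $\to 0$ but $\gg\Delta_n$) do not create a larger limit than $\overline\lambda_2$, which requires a quantitative two-sided Taylor bound on $X$ near $\widehat z$ valid uniformly on a shrinking neighbourhood (here the $\mathcal C^2$ assumption and, for the remainder control, the hypothesis~\eqref{ass:LARS}/smoothness of $\Gamma$ are used), plus a tightness argument ensuring the sup over $k$ is attained at bounded $\|k\|$ with high probability. One also has to verify that the argmax defining $\lambda_{2,n}$ is eventually either in the far region or at a bounded-$k$ near point, so that the limit is genuinely the stated max-of-sup. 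Once the joint a.s. convergence $(\lambda_{1,n},\lambda_{2,n})\to(\lambda_1,\overline\lambda_2)$ is established, the distributional identity $S^{\mathrm{Grid}}\sim\mathcal U([0,1])$ follows by passing to the limit in the exact finite-$n$ identity. Finally, for unbiasedness under $\bbH_1$: I would argue monotonicity of $\lambda\mapsto\overline\Phi(\lambda/\sigma)$ together with a stochastic-domination comparison between the shifted-mean process and the centered one — under $\bbH_1$ the first knot $\lambda_1$ is stochastically larger while the conditional law of $\overline\lambda_2$ given $\lambda_1$ is only mildly affected — yielding $\bbP\{S^{\mathrm{Grid}}\le\alpha\}\ge\alpha$; the cleanest route is to transfer the unbiasedness already known for the finite grid spacing test in \cite{ADCM16} through the same limiting procedure.
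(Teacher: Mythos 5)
Your plan follows essentially the same route as the paper's proof: exact uniformity (and unbiasedness) of the spacing statistic on each grid from \cite{ADCM16}, the exact uniform-offset/independence statement of Lemma~\ref{lem:z1}, a near/far decomposition with Taylor expansions around $\widehat z$ showing $\lambda_{1,n}\to\lambda_1$ and $\lambda_{2,n}\to\overline\lambda_2$ (the latter only in distribution, since $\overline\lambda_2$ is randomized), and then passage to the limit. The ``intermediate scales'' obstacle you flag is precisely what the paper's Lemma~\ref{lem:z2} handles by placing the near/far cutoff at $\Delta_n^{\beta}$ with $\beta=1/4$ (so the far region's denominators are $\gtrsim\Delta_n^{2\beta}$ while numerator errors are $O_P(\Delta_n^{2-2\beta})$), which is exactly the quantitative bound your plan calls for.
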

   
   Theorem \ref{thm:grid_known_variance} shows in particular  that  the  statistics\textemdash referred to as the Spacing test statistics in the introduction\textemdash given by
 \begin{equation} \label{e:naive}
 S^{\mathrm{ST} }= \frac{\overline \Phi(\lambda_1/\sigma)}{\overline \Phi ( \lambda_2/\sigma)} 
  \end{equation}
   does not follows a $\mathcal U([0,1])$ distribution under~$\bbH_0$ ans leads to a non-conservative test. Indeed, observe that almost surely $\lambda_2\leq\bar\lambda_2$ so that $S^{\mathrm{ST} }\geq S^{\mathrm{Grid}}$ almost surely. Note that the two test statistics differ on the event $\{\lambda_2\neq\bar\lambda_2\}=\{\lambda_2<\bar\lambda_2\}$.

Now, when the variance $\sigma^2$ is unknown, we can build an estimator $\widehat\sigma^2$ defined in~\eqref{eq:sighatz} and obtain a studentized version of the previous theorem. Please consult Section~\ref{sec:VarianceEstimation} for further details on the construction of the estimator $\widehat\sigma$ and on Conditions \eqref{a:KLN} and \eqref{a:nonDegeneratefiniteComplex}.

\begin{theorem}  \label{t:t}  
Assume \eqref{e:Normalization}, \eqref{e:NonDegenerated}, \eqref{a:KLN} and \eqref{a:nonDegeneratefiniteComplex} where $2\leqslant N<\infty$, then the following test statistics $T^{\mathrm{Grid}}$ satisfies 
	 $$
	  T^{\mathrm{Grid}}:= \frac{ \overline F _{m-1} \left({\lambda_1}/{\widehat \sigma} \right) }{\overline F _{m-1} \big( {\overline{\lambda}_2}/{\widehat \sigma}  \big) } \sim\mathcal U([0,1])\,$$
under~$\bbH_0$ where $m = 2 N$, $  F_{m-1}$ is the Student cumulative distribution function with $m-1$ degrees of freedom, $ \overline  F _{m-1}  = 1-F_{m-1} $ its survival function and $ \widehat \sigma^2$ is  defined by~\eqref{eq:sighatz}. 
\end{theorem}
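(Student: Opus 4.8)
The plan is to reduce Theorem~\ref{t:t} to Theorem~\ref{thm:grid_known_variance} by a conditioning argument, exactly mirroring the way a studentized spacing test is obtained from its known-variance counterpart in the finite-dimensional setting (see~\cite{taylor2014exact,ADCM16}). The starting point is the observation that, under Conditions~\eqref{a:KLN} and~\eqref{a:nonDegeneratefiniteComplex} with $N<\infty$, the process $Z$ (equivalently $X$) lives in a finite-dimensional Gaussian shell: writing out the Karhunen-Lo\`eve expansion with $N$ terms, $X$ is a smooth functional of a Gaussian vector in $\bbR^{m}$ with $m=2N$ (the factor $2$ for the real and imaginary parts). Under $\bbH_0$ this vector is centered with covariance $\sigma^2$ times a known matrix, so its Euclidean norm $\|\cdot\|$ and its direction $\cdot/\|\cdot\|$ on the sphere $\mathds S^{m-1}$ are independent, the former with $\sigma\chi_m$ distribution and the latter uniform. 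The estimator $\widehat\sigma^2$ from~\eqref{eq:sighatz} should be (a constant times) the squared norm of the component of this vector orthogonal to the direction actually used to build $\widehat z$, $\lambda_1$, $\lambda_2$ and $\overline\lambda_2$ — i.e.\ it carries exactly one ``extra'' degree of freedom beyond those statistics, which is why a Student law with $m-1$ degrees of freedom appears.

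The key steps, in order, are as follows. First I would recall from the known-variance proof (Appendix~\ref{proof:grid_known_variance}) the precise structure established there: conditionally on $X''$ and on the unit direction $u\in\mathds S^{m-1}$ of the Gaussian vector, the pair $(\lambda_1,\overline\lambda_2)$ is a deterministic increasing function of the radial coordinate $r$, and $\widehat\sigma$ is a function of the orthogonal complement only. Second, I would invoke the scaling/homogeneity of all the quantities involved: $\lambda_1$, $\lambda_2$, $\overline\lambda_2$ are positively homogeneous of degree $1$ in the underlying Gaussian vector (this is visible from~\eqref{def:hatz}, \eqref{eq:second_knot} and~\eqref{eq:limit_l2}, since $X^{z}$ and the supremum term scale linearly), while $X''$, being the Hessian at the \emph{argmax}, is scale-invariant in the relevant sense (the argmax does not move under global rescaling). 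Hence the triple $(\lambda_1/\widehat\sigma,\overline\lambda_2/\widehat\sigma, X'')$ depends on the Gaussian vector only through its direction. Third, I would use the standard fact that for the known-variance statistic one actually has the stronger conclusion that $S^{\mathrm{Grid}}=\overline\Phi(\lambda_1/\sigma)/\overline\Phi(\overline\lambda_2/\sigma)$ is uniform \emph{conditionally} on $(X'',\widehat z, \text{direction of the nuisance part})$ — this is what the proof of Theorem~\ref{thm:grid_known_variance} really shows, because there the randomness driving uniformity is only the radial coordinate $r=\|\cdot\|/\sigma\sim\chi_m$ through the pair $(\lambda_1,\overline\lambda_2)$. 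Fourth, replacing the deterministic $\sigma$ by $\widehat\sigma$ amounts to replacing the $\chi_m$ radial variable, conditioned on the orthogonal direction, by a $\chi_m/\chi_{m-1}$-type ratio, i.e.\ (up to a constant) a Student $t_{m-1}$ variable; substituting $\overline\Phi$ by the matching survival function $\overline F_{m-1}$ exactly compensates, so $T^{\mathrm{Grid}}=\overline F_{m-1}(\lambda_1/\widehat\sigma)/\overline F_{m-1}(\overline\lambda_2/\widehat\sigma)$ is again uniform. Finally I would integrate out the conditioning to get the unconditional $\mathcal U([0,1])$ statement.

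Concretely, the cleanest way to organize this is to show that, conditionally on $X''$, the pair $(\lambda_1/\widehat\sigma,\overline\lambda_2/\widehat\sigma)$ has the same joint law as $(\lambda_1/\sigma,\overline\lambda_2/\sigma)$ would have \emph{if} the radial variable were $\sqrt{m-1}\,\lambda_1'/(\lambda_1'\text{-independent }\chi_{m-1})$ rather than $\chi_m$ — and then quote the elementary distributional identity that if $G$ has density proportional to $r^{m-1}e^{-r^2/2}$ and one conditions appropriately, the map $r\mapsto \overline\Phi(r)$ pushed through the randomization producing $\overline\lambda_2$ is uniform, with the Student replacement being the exact analogue. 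The main obstacle I anticipate is \textbf{not} the distributional bookkeeping but verifying that the estimator $\widehat\sigma^2$ of~\eqref{eq:sighatz} is genuinely independent of the sigma-algebra generated by $(\lambda_1,\lambda_2,X'',\widehat z)$ and carries precisely $m-1$ (not $m-2$, not $m$) degrees of freedom — that is, that $\widehat\sigma$ is measurable with respect to the orthogonal complement of the \emph{single} direction $u$ along which the tested statistics are built, so that $(\lambda_1,\overline\lambda_2,\widehat\sigma)$ together exhaust the $m$ coordinates with the correct split $1\!:\!(m-1)$. This is a computation with the explicit form~\eqref{eq:sighatz} and the Karhunen-Lo\`eve coordinates under Conditions~\eqref{a:KLN}--\eqref{a:nonDegeneratefiniteComplex}, and it is where the hypothesis $N<\infty$ is essential; once it is in place, the uniformity of $T^{\mathrm{Grid}}$ follows from Theorem~\ref{thm:grid_known_variance} by conditioning on $\widehat\sigma$ and on $X''$ and applying the scaling invariance noted above.
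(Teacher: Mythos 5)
Your plan diverges from the paper's proof (which never conditions on a polar decomposition: it applies the finite\hyphen{}dimensional studentized spacing test of \cite{ADCM16} to the grid values $(X(z))_{z\in G_n}$ with the grid estimator $\widehat\sigma_n$, getting exact uniformity of $\overline F_{m-1}(\lambda_{1,n}/\widehat\sigma_n)/\overline F_{m-1}(\lambda_{2,n}/\widehat\sigma_n)$ for each $n$, and then passes to the limit using $\lambda_{1,n}\to\lambda_1$ a.s., $\lambda_{2,n}\to\overline\lambda_2$ in distribution from Lemma~\ref{lem:z2}, and $\widehat\sigma_n\to\widehat\sigma_{\widehat z}$ a.s.\ by continuity in $\widehat z_n$). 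More importantly, your route contains a genuine gap at its central step. You claim that the proof of Theorem~\ref{thm:grid_known_variance} ``really shows'' that $S^{\mathrm{Grid}}$ is uniform conditionally on $(X'',\widehat z,\text{direction})$, the residual randomness being only the radial coordinate $r\sim\chi_m$. This is false. Since $\lambda_1$ and $\overline\lambda_2$ are both positively homogeneous of degree $1$ in the underlying Gaussian vector, conditioning on its direction (and on the auxiliary randomization $\mathcal U$) fixes two constants $a>b>0$ and reduces the statistic to $\overline\Phi(ra)/\overline\Phi(rb)$ with $r\sim\chi_m$, which is not uniform (e.g.\ it degenerates to $1$ as $a\to b$). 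The exactness of spacing-type statistics is not a radial phenomenon: it comes from conditioning on \emph{less} (essentially on the second knot and the selected direction/randomization only), so that $\lambda_1$ retains a truncated Gaussian conditional law whose CDF transform is uniform; that is precisely the structure exploited, in the finite-dimensional setting, by \cite{taylor2014exact,ADCM16}, and it cannot be recovered once you have frozen the full direction.

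The second load-bearing claim, which you yourself flag as the main obstacle, also fails: $\widehat\sigma^2=\widehat\sigma^2_{\widehat z}$ of \eqref{eq:sighatz} is \emph{not} independent of $(\lambda_1,\lambda_2,\overline\lambda_2,X'')$. It is the Karhunen--Lo\`eve estimator built from $X_{\mathrm{norm}}^{\widehat z}(\cdot)$, i.e.\ from the very residual process whose supremum-type functionals produce $\lambda_2$ (and, through the grid limit, $\overline\lambda_2$); Proposition~\ref{prop:am} only gives, for \emph{fixed} $z$, independence of $\widehat\sigma_z$ from $X(z)$ and of the \emph{ratio} process $X_{\mathrm{norm}}^{z}(\cdot)/\widehat\sigma_z$ from $\widehat\sigma_z$ — not independence of $\widehat\sigma$ from the knots, and not at the random point $\widehat z$. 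Even granting such independence, conditioning on $\widehat\sigma$ and invoking Theorem~\ref{thm:grid_known_variance} would only give uniformity of $\overline\Phi(\lambda_1/\sigma)/\overline\Phi(\overline\lambda_2/\sigma)$, which says nothing directly about $\overline F_{m-1}(\lambda_1/\widehat\sigma)/\overline F_{m-1}(\overline\lambda_2/\widehat\sigma)$; the Gaussian-to-Student replacement requires the joint-law computation (truncated Gaussian becoming truncated Student after integrating the independent $\chi_{m-1}$-type factor), exactly as in the proof of Theorem~\ref{thm:rice_unknown_variance} or in Theorem~3 of \cite{ADCM16}. To repair your argument you would essentially have to redo that computation at the grid level and pass to the limit — which is the paper's proof.
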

A proof can be found in Appendix~\ref{proof:t:t}.
	  
	  
\begin{remark}
Only the first point of \eqref{a:nonDegeneratefiniteComplex} is required for the proof. Moreover, if  $ ~m= +\infty$, the Student distribution is to be replaced by a standard normal distribution. 
\end{remark} 

\section[The Rice method: a grid-less approach]{The Rice method: a grid-less approach} \label{s:riz}
In this section, we build our test statistic directly on the entire path of the process $X$ in a grid-less manner. We assume that the process $X$ satisfies  Assumptions \eqref{e:Normalization} and \eqref{e:NonDegenerated}, and is {centered} (namely~$\bbH_0$).
  As in the preceding section,  we consider $\lambda_1$  and $\lambda_2$ defined by~\eqref{def:hatz} and \eqref{eq:second_knot} respectively.
  
We denote $X = \sigma   \widetilde X$ so that the covariance function of $\widetilde X$ is the correlation function $\rho$ of $X$, namely~$\widetilde X$ is the standardized version of $X$. Note that, by regression formulas and stationarity, it holds
  $$
  \forall z\in\bbT,\quad
  \E\big[\widetilde X''(z)\big|(\widetilde X(z), \widetilde X'(z))\big] = -\widetilde\Lambda  \widetilde X(z)\,,
  $$
  so that  we can define the process $\widetilde R$ by the decomposition 
  $$ \widetilde X''(z) = -\widetilde\Lambda  \widetilde X( z) +    \widetilde R(z)$$
 where~$ \widetilde R(z)$ and~$\widetilde X(z)$ are independent for any $z\in\bbT$ and $\widetilde\Lambda=-\rho''(0)$ is the variance-covariance matrix of~$\widetilde X'(t)$. In particular, observe that
 \beq
 \notag
X''(\widehat z) = -\widetilde\Lambda X(  \widehat z) +   R(\widehat z)\,,
  \eeq
  where $R(\widehat z)=\sigma\widetilde R(\widehat z)$. Using the  Rice method of \cite[Theorem 7.2]{Azais_Wschebor_09} (see also \cite{lockhart2014significance}),  it follows that the maximum $  \lambda_1$ has for density  w.r.t the Lebesgue measure on~$\bbR^+$ at point $\ell>0$
\eq
\notag
\mathtt{(cst)}
(-1)^d\E\big[ \det(-\widetilde\Lambda X( 0) +   R(0)) \1_{A_{\ell}} \big| X(0) = \ell, X'(0) = 0\big]  \sigma^{-1}\phi( \sigma^{-1}\ell), 
\qe
where $\phi$~denotes the standard Gaussian density, $A_\ell$  is the event $\{X(y) \leqslant  \ell,\  \forall y\in\bbT\} $ and~$\mathtt{(cst)}$, as in the following, denotes a positive constant. The numerical values $\mathtt{(cst)}$ may vary from an occurence to another and it may depend  on $m$ and $\sigma$ which are assumed fixed in our framework.

  \subsection{The known variance case}
  We begin by the known variance case. The main observation is that the method of  \cite[Theorem~7.2]{Azais_Wschebor_09} can be extended to compute the joint distribution of $ (\lambda_1, \lambda_2,  R(\widehat z)) $ as follows.     
  \begin{itemize}  
 \item Denote $\bbS\!$ the set of symmetric matrices and pick a Borel set $B$ on $\mathcal D :=\bbR^2\!\times\bbS$.

\item For every $z \in \bbT$, recall that
$$
\forall y\in\bbT\setminus\{z\},\quad 
 X^{|z}(y) :=\frac{ X(y) -\rho(z-y) X(z) +  \langle \rho'(z-y) , \widetilde \Lambda^{-1} X'(z) \rangle }{1-\rho(z-y) }
$$
and define
\eq
\label{eq:lambdaT}
\forall z\in\bbT,\quad
\lambda_2^z:=\sup_{y\in\bbT\setminus\{ z\}}X^{|z}(y)\,.
\qe
Remark that, for fixed $z\in\bbT$, $\lambda_2^z$ is a.s. finite by Lemma \ref{l:jm:l1}, $X^{|z}(\cdot)$ is independent of~$(X(z),X'(z))$ and, by way of consequence, $\lambda_2^z$ is independent of $(X(z),X'(z))$. Furthermore, note that since~$\bbT$ is without boundary, for $z=\widehat z$, one has $X'(z)=0$ and $\lambda_2^z=\lambda_2$ as defined by~\eqref{eq:second_knot}.
\item Observe that on the event $\{ \forall y \neq z,\ X(y) < X(z)\}$ one has almost surely that  $z=\widehat z$, $X(z)=\lambda_1$, $ \lambda^z_2 =\lambda_2$ and $R(z)=R(\widehat z)$. Also, a simple computation shows that
  \[
  \forall z\in\bbT\ \mathrm{s.t.}\ X'(z)=0,\quad
  \1_{\{z=\widehat z\}}=
  \1_{\{ \forall y \neq z,\ X(y) < X(z)\}}=\1_{\{0<\lambda_2^z< X(z)\}},
  \] 
  almost surely. Hence, by unicity of $\widehat z$ and recalling that the set $\{{z\, ;\  X'(z) =0}\}$ is finite under~\eqref{e:NonDegenerated}, we deduce that
  \[
  \sum_{z:X'(z) =0}\1_{\{( X(z),  \lambda^z_2 ,  R(z))\in B\}\cap\{0<\lambda_2^z< X(z)\}}=\1_{\{(\lambda_1 , \lambda_2 ,  R(\widehat z)) \in B\}}.
  \]
 \item   On $\mathcal D$ define smooth lower approximations $ \varphi^{(n)}_B$  of the indicator function of $B$  that converge when~$n$ goes to infinity i.e. 
 \[
 \forall (\ell_1,\ell_2,r)\in\Omega,\quad\varphi^{(n)}_B(\ell_1,\ell_2,r)\longrightarrow \1_{\{(\ell_1,\ell_2,r)\in B\}\cap\{0<\ell_2<\ell_1\}}\,.
 \]

 \item   Apply Rice formula with weights  \cite[Theorem 6.4]{Azais_Wschebor_09} (see also the proof of  \cite[Theorem~7.2]{Azais_Wschebor_09}) to compute
\begin{align*}
\E&\Big[
\sum_{z:X'(z) =0}  \varphi^{(n)}_B \big( X(z),  \lambda^z_2 ,  R(z) \big )\Big]
\\&= \mathtt{(cst)} \int_{\bbT}    \E\Big[  |\det( -  \widetilde \Lambda  X(z) + R(z))|\,  \varphi^{(n)}_B(X(z), \lambda_2^z, R(z))\ \Big|\ X'(z) =0\Big] \mathrm dz  \,
\\&= \mathtt{(cst)} \int_{\bbT}    \E\Big[  |\det( -  \widetilde \Lambda  X(z) + R(z))|\,  \varphi^{(n)}_B(X(z), \lambda_2^z, R(z))\ \Big]   \,
 \mathrm dz 
\end{align*}
where the last equality relies on the fact that  $(X(z),\lambda_2^z,R(z))$ is independent of $X'(z)$.
  \item  Combining the previous observations and passing to the monotone  limit  as $n$ tends to~$\infty$  in the aforementioned Rice formula with weights, we get that 
\begin{align} 
\P  &\big\{  \big(\lambda_1 , \lambda_2 ,  R(\widehat z) \big) \in B \big\} \notag
\\& =\E \Big[\sum_{z:X'(z) =0}\1_{\{( X(z),  \lambda^z_2 ,  R(z))\in B\}\cap\{0<\lambda^z_2<X(z)\}}\Big]\notag
\\& = \mathtt{(cst)} \int_{\bbT}    \E\Big[  |\det( -  \widetilde \Lambda  X(z) + R(z)) | 
 \1_{ \{(X(z), \lambda_2^z, R(z))\in B\}\cap\{0<\lambda_2^z< X(z)\}} \Big]  \ 
 \mathrm dz  \notag
\\&
 =\mathtt{(cst)} \,  \E\Big[ | \det( -  \widetilde \Lambda
  X(0)   +R(0)) |
   \1_{ \{(X(0), \lambda_2^0, R(0))\in B\}\cap\{0<\lambda_2^0< X(0)\}}  \Big]   \,,\notag\\
   &=\mathtt{(cst)} \,   \E\Big[  \det( -  \widetilde \Lambda
  X(0)   +R(0)) 
 \1_{ \{0< \lambda_2^0<X(0)\} } \1_{\{ (X(0), \lambda_2^0, R(0))\in B\}}   \Big]   \,,\label{eq:BigStepRice}
\end{align}
by stationarity and using that, on the event $\{0< \lambda_2^0<X(0)\}$, the matrix $-X''(0)=   \widetilde \Lambda
  X(0)   -R(0)$ belongs to the set of positive definite symmetric matrices, namely $\bbS^+$.
\end{itemize}

Before stating the key result on the joint density of $ (\lambda_1, \lambda_2, R(\widehat z)) $ we need to introduce a dominating measure. First, recall that $X(0)$ is independent of the pair $( \lambda_2^0, R(0)) $. Then, observe that $( \lambda_2^0, R(0))=\sigma\times( \widetilde\lambda_2^0, \widetilde R(0)) $ where $ \widetilde\lambda_2^0$ is defined as in \eqref{eq:lambdaT} for the process~$\widetilde X$. Denote~$\mu_1$ the law of $( \widetilde\lambda_2^0, \widetilde R(0))$ and note that it does not depend on $\sigma$. Denote $\mu_\sigma$ the law of~$( \lambda_2^0, R(0))$ and remark that for any Borel set~$B$ of $\bbR\times\bbS$, it holds $\mu_\sigma(\sigma B)=\mu_1(B)$. Eventually, remark that
\eq
\label{eq:domin}
\mbox{The law of } (X(0),\lambda_2^0, R(0)) \mbox{ is dominated by } \mathrm{Leb}(\bbR)\otimes\mu_\sigma\,.
\qe 
where $\mathrm{Leb}(\bbR)$ denotes the Lebesgue measure on~$\bbR$. As a consequence we can prove the following proposition.

  \begin{proposition}
\label{prop:Rice_density_known_variance}
Under~$\bbH_0$, the joint law $\mathcal L((\lambda_1, \lambda_2, R(\widehat z)))$ of  $ (\lambda_1, \lambda_2, R(\widehat z)) $ satisfies for all $(\ell_1,\ell_2,r)\in\bbR^2\times\bbS$,
 \begin{align*}
 \frac{\mathrm d\mathcal L((\lambda_1, \lambda_2, R(\widehat z)))}{\mathrm d \mathrm{Leb}(\bbR)\otimes\mu_\sigma}&(\ell_1,\ell_2,r)= \mathtt{(cst)} \det( -\widetilde \Lambda
  \ell_1  +r) \1_{\{0<\ell_2<\ell_1\}}  \sigma^{-1}\phi( \sigma^{-1}\ell_1)\,,
 \end{align*}  
 where $\mathrm{Leb}(\bbR)\otimes\mu_\sigma$ is defined by~\eqref{eq:domin} and $\bbS\!$ denotes the set of symmetric matrices.
\end{proposition}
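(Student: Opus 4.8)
The plan is to extract the density from the measure identity \eqref{eq:BigStepRice} by testing it against Borel sets $B$ of the form $B = B_1 \times B_2$ with $B_1 \subset \bbR$ (the $\ell_1$-coordinate) and $B_2 \subset \bbR \times \bbS$ (the $(\ell_2, r)$-coordinates). First I would recall from the discussion preceding the statement that $X(0)$ is independent of the pair $(\lambda_2^0, R(0))$, so that the law of $(X(0), \lambda_2^0, R(0))$ factorizes as $\sigma^{-1}\phi(\sigma^{-1}\ell_1)\,\mathrm{d}\mathrm{Leb}(\ell_1) \otimes \mu_\sigma(\mathrm{d}\ell_2, \mathrm{d}r)$; this is exactly the domination statement \eqref{eq:domin} made precise, and it is the natural reference measure appearing in the claim. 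Then the right-hand side of \eqref{eq:BigStepRice} becomes
\[
\mathtt{(cst)}\int_{\bbR}\int_{\bbR\times\bbS} \det(-\widetilde\Lambda\,\ell_1 + r)\,\1_{\{0<\ell_2<\ell_1\}}\,\1_{\{(\ell_1,\ell_2,r)\in B\}}\;\mu_\sigma(\mathrm{d}\ell_2,\mathrm{d}r)\;\sigma^{-1}\phi(\sigma^{-1}\ell_1)\,\mathrm{d}\ell_1\,,
\]
where I have used the independence to replace the expectation over $(X(0), \lambda_2^0, R(0))$ by the product measure, and I have used (as noted at the end of the bulleted derivation) that on the event $\{0 < \lambda_2^0 < X(0)\}$ the matrix $-X''(0) = \widetilde\Lambda X(0) - R(0)$ lies in $\bbS^+$, so that the determinant there is positive and $|\det(\cdot)| = \det(\cdot)$.

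Since this holds for every Borel set $B$ of $\mathcal D = \bbR^2 \times \bbS$, and since the integrand on the right is a genuine (nonnegative, once restricted to $\{0<\ell_2<\ell_1\}$ where the determinant is positive) density against $\mathrm{Leb}(\bbR)\otimes\mu_\sigma$, the Radon--Nikodym theorem gives immediately that $\mathcal L((\lambda_1,\lambda_2,R(\widehat z)))$ is absolutely continuous with respect to $\mathrm{Leb}(\bbR)\otimes\mu_\sigma$ with the stated density
\[
\frac{\mathrm{d}\mathcal L((\lambda_1,\lambda_2,R(\widehat z)))}{\mathrm{d}\,\mathrm{Leb}(\bbR)\otimes\mu_\sigma}(\ell_1,\ell_2,r) = \mathtt{(cst)}\,\det(-\widetilde\Lambda\,\ell_1 + r)\,\1_{\{0<\ell_2<\ell_1\}}\,\sigma^{-1}\phi(\sigma^{-1}\ell_1)\,.
\]
I would double-check measurability of the map $(\ell_1, \ell_2, r) \mapsto \det(-\widetilde\Lambda \ell_1 + r)\1_{\{0<\ell_2<\ell_1\}}\sigma^{-1}\phi(\sigma^{-1}\ell_1)$ (obvious, it is a polynomial in the entries times smooth and indicator factors) and $\sigma$-finiteness of $\mathrm{Leb}(\bbR)\otimes\mu_\sigma$ (clear, since $\mu_\sigma$ is a probability measure), which are the hypotheses needed for Radon--Nikodym uniqueness; then the identification of the density is forced.

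\textbf{Main obstacle.} The substantive content is entirely upstream, in establishing the master identity \eqref{eq:BigStepRice} via the Rice formula with weights and the monotone passage to the limit through the approximations $\varphi^{(n)}_B$; that work has already been carried out in the bulleted derivation preceding the statement, so for the proposition itself the only point requiring genuine care is the bookkeeping of which measure is the correct dominating measure. Concretely, the delicate step is justifying that one may pull the expectation apart as an integral against the product $\mathrm{Leb}(\bbR)\otimes\mu_\sigma$: this uses the independence of $X(0)$ from $(\lambda_2^0, R(0))$ together with the fact that the law $\mu_\sigma$ of $(\lambda_2^0, R(0))$ is genuinely singular in its $R(0)$-component (its support is constrained by the relation $\lambda_2^0 < X(0)$ coupling it to $X(0)$ on the relevant event, but as an unconditional law it is just some fixed law on $\bbR\times\bbS$), so one cannot hope to write the density against Lebesgue measure on all of $\bbR^2\times\bbS$ — the measure $\mu_\sigma$ is precisely the device that absorbs this singularity. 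Once one accepts $\mathrm{Leb}(\bbR)\otimes\mu_\sigma$ as the reference measure, the proof is a one-line application of Radon--Nikodym to the identity \eqref{eq:BigStepRice}.
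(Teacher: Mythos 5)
Your proposal is correct and takes essentially the same route as the paper's own proof: starting from \eqref{eq:BigStepRice}, use the independence of $X(0)$ and $(\lambda_2^0,R(0))$ encapsulated in \eqref{eq:domin} to rewrite the expectation as $\int_B \det(-\widetilde\Lambda\ell_1+r)\1_{\{0<\ell_2<\ell_1\}}\sigma^{-1}\phi(\sigma^{-1}\ell_1)\,\mathrm{d}\ell_1\,\mu_\sigma(\mathrm{d}(\ell_2,r))$ for Borel sets $B$, and identify the Radon--Nikodym derivative. The only cosmetic difference is your aside that $\mu_\sigma$ is ``genuinely singular'' in its $R(0)$-component, which is neither needed nor justified --- the role of $\mu_\sigma$ is simply that no Lebesgue density for $(\lambda_2^0,R(0))$ need be asserted.
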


\begin{proof}
Observe that the density at point $\ell_1$ of $X(0)$ with respect to the Lebesgue measure is~$ \sigma^{-1}\phi( \sigma^{-1}\ell_1)$ and recall \eqref{eq:domin}. Now, for any Borel set $B$ of $\bbR^2\times\bbS$, note that
\begin{align*}
\E&\Big[  \det( -  \widetilde \Lambda  X(0)   +R(0))  \1_{ \{(0<\lambda_2^0<X(0)\}}   \1_{ \{(X(0), \lambda_2^0, R(0))\in B\}}  \Big] \\
&=\int_B  \det( -  \widetilde \Lambda  \ell_1   +r) \1_{ \{0<\ell_2<\ell_1\}}\sigma^{-1}\phi( \sigma^{-1}\ell_1)\mathrm{d}\ell_1\mu_\sigma(\mathrm{d}(\ell_2,r))
\end{align*}
thanks to \eqref{eq:BigStepRice}, which prove the result.
\end{proof}

\noindent
We can now state our result when the variance is known.
\begin{theorem}
 \label{thm:rice_known_variance}
Set
  $$
  \forall r\in\bbS^+,\ 
  \forall \ell>0,\quad 
  \overline G_r(\ell) :=\int_\ell ^{+\infty}  \det (-\widetilde\Lambda u +r)    \phi(u \sigma^{-1}) \mathrm{d}u\,,
  $$
where  $\widetilde \Lambda$ denotes the Hessian of the correlation function $\rho$ of $X$ at the origin.
  Under Assumptions \eqref{e:Normalization} and~\eqref{e:NonDegenerated}, the test statistic  
  $$
 S^{\mathrm{Rice}} :=\frac{\overline  G_{R(\widehat z)} (\lambda_1)}{  \overline  G_{R(\widehat z)} (\lambda_2)} \sim \mathcal U([0,1])
  $$
 under~$\bbH_0$.

    \end{theorem}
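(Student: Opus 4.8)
The plan is to deduce the distribution of $S^{\mathrm{Rice}}$ from the joint density obtained in Proposition~\ref{prop:Rice_density_known_variance} by a change-of-variables / conditioning argument. First I would condition on the pair $(\lambda_2, R(\widehat z)) = (\ell_2, r)$. On the event $\{0<\lambda_2<\lambda_1\}$ the matrix $-\widetilde\Lambda\ell_1 + r = -X''(\widehat z)$ lies in $\bbS^+$, so $r\in\bbS^+$ and the map $u\mapsto \det(-\widetilde\Lambda u + r)$ is exactly the integrand defining $\overline G_r$. From Proposition~\ref{prop:Rice_density_known_variance}, the conditional density of $\lambda_1$ given $(\lambda_2,R(\widehat z))=(\ell_2,r)$, with respect to Lebesgue measure on $(\ell_2,+\infty)$, is proportional to $\det(-\widetilde\Lambda\ell_1 + r)\,\sigma^{-1}\phi(\sigma^{-1}\ell_1)$; the normalizing constant is precisely $\overline G_r(\ell_2)$ (after absorbing the harmless $\sigma$-dependent factor, since $\phi(u\sigma^{-1})$ and $\sigma^{-1}\phi(\sigma^{-1}u)$ differ by the constant $\sigma$). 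Hence
\[
\P\big(\lambda_1 > t \,\big|\, \lambda_2=\ell_2, R(\widehat z)=r\big) = \frac{\overline G_r(t)}{\overline G_r(\ell_2)}, \qquad t\ge \ell_2 .
\]

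\textbf{Main step.} Now observe that $S^{\mathrm{Rice}} = \overline G_{R(\widehat z)}(\lambda_1)/\overline G_{R(\widehat z)}(\lambda_2)$ is, conditionally on $(\lambda_2, R(\widehat z)) = (\ell_2,r)$, simply $\overline G_r(\lambda_1)/\overline G_r(\ell_2)$, i.e.\ a fixed decreasing continuous function applied to $\lambda_1$. A standard fact about the probability integral transform: if $U$ is a random variable with continuous survival function $\overline H$ on its support, then $\overline H(U)\sim\mathcal U([0,1])$. Here $t\mapsto \overline G_r(t)/\overline G_r(\ell_2)$ is exactly the conditional survival function of $\lambda_1$ given $(\lambda_2,R(\widehat z))=(\ell_2,r)$ (it equals $1$ at $t=\ell_2$, decreases continuously to $0$), so $S^{\mathrm{Rice}}\,\big|\,(\lambda_2,R(\widehat z)) \sim \mathcal U([0,1])$. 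Since this conditional law does not depend on the conditioning value $(\ell_2,r)$, the unconditional law of $S^{\mathrm{Rice}}$ is also $\mathcal U([0,1])$, which is the claim.

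\textbf{Where the difficulties lie.} The genuinely delicate points are not in this final argument but are imported from what precedes: (i) that $\overline G_r(\ell_2)$ is finite and strictly positive for a.e.\ realization, which requires $R(\widehat z)\in\bbS^+$ a.s.\ on the relevant event together with integrability of $\det(-\widetilde\Lambda u + r)\phi(u\sigma^{-1})$ at $+\infty$ (polynomial-times-Gaussian, hence fine) — this legitimizes dividing by $\overline G_r(\ell_2)$; (ii) that Proposition~\ref{prop:Rice_density_known_variance} indeed factorizes the way I used it, i.e.\ that the $X(0)$-marginal separates cleanly from $(\lambda_2^0,R(0))$ in the dominating measure $\mathrm{Leb}(\bbR)\otimes\mu_\sigma$ — this is exactly the content of the independence of $X^{\widehat z}(\cdot)$ and $R(\widehat z)$ from $X(\widehat z)$, already established; and (iii) the measurability of $r\mapsto\overline G_r$ and the use of Fubini to pass from the conditional to the unconditional statement, which is routine. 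I would therefore present the proof as: recall Proposition~\ref{prop:Rice_density_known_variance}; read off the conditional law of $\lambda_1$; identify $t\mapsto\overline G_r(t)/\overline G_r(\lambda_2)$ as its conditional survival function; invoke the probability integral transform conditionally; conclude by the tower property. The only subtlety worth spelling out carefully is the continuity/strict monotonicity of $\overline G_r$ on $(\ell_2,\infty)$ needed for the integral transform, which follows since its integrand is a.e.\ positive and continuous on that range.
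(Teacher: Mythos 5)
Your proposal is correct and follows essentially the same route as the paper's proof: read off from Proposition~\ref{prop:Rice_density_known_variance} the conditional density of $\lambda_1$ given $(\lambda_2,R(\widehat z))=(\ell_2,r)$, apply the probability integral transform to identify $\overline G_r(\lambda_1)/\overline G_r(\ell_2)$ as uniform, and conclude unconditionally since the conditional law does not depend on $(\ell_2,r)$. Your added remarks on the finiteness and strict positivity of $\overline G_r(\ell_2)$ and the continuity of the conditional survival function only make explicit technicalities the paper leaves implicit.
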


\noindent
\begin{proof}

Using Proposition \ref{prop:Rice_density_known_variance}, we know that  the density of $\lambda_1$ at $\ell_1$  and conditional to $ (\lambda_2,R( \widehat z))=(\ell_2,r)$ is equal to 
$$
\mathtt{(cst)}\, \det(-  \widetilde \Lambda \ell_1 +r)  \phi(\sigma^{-1}\ell_1)  \1_{\{0<\ell_2 <\ell_1\} } .
$$
It is well known  that, if a random variable $Z$ has for cumulative density function $\bbF$ then $\bbF(Z)$ follows an uniform distribution  on~$[0,1]$. This implies  that, conditionally to  $ (\lambda_2,R( \widehat z))=(\ell_2,r)$, 
$$
\frac{\bar  G_r (\lambda_1)}{\bar  G_r (\ell_2)} \sim \mathcal U([0,1]).
$$
Since the conditional distribution does not depend on $(\ell_2,r)$, it is also the non conditional distribution and it yields
$$
\frac{\bar  G_{R(\widehat z )}(\lambda_1)}{\bar  G_{R(\widehat z )} (\lambda_2)} \sim \mathcal U([0,1])\,,
$$
as claimed.
\end{proof}

\subsection{The unknown variance case}

\subsubsection{Estimating the variance}
\label{sec:VarianceEstimation}
When the variance $\sigma^2$ is unknown in~\eqref{eq:variance}, we precise here the assumptions and the estimator we use to estimate the variance. In this section, except for explicit examples, we consider a real valued centered Gaussian process $Y$ {not necessarily stationary} defined on the $2$-dimensional torus~$\bbT$. Let $m\geq2$ (possibly infinite) and assume that $Y$ admits an order $m$ Karhunen-Lo\`eve expansion in the sense that
\begin{align*}
\label{a:Klm}
 Y = \sum_{i=1}^{m} \zeta_i f_i \mbox{ with } \bbV\!\mathrm{ar}(\zeta_i) = \sigma^2 \mbox{ and}~\forall t\in\bbT,\ \sum_{i=1}^{m}| f_i(t)|^2 = 1\,,
 \tag{${\mathrm{KL}(m)}$}
\end{align*}
where the equality holds in $\bbL^2(\Omega)$ and $(f_1,\ldots,f_m)$ is a system of non-zero functions orthogonal on~$\bbL^2(\bbT)$. Through our analysis, we need to consider one of the following assumptions.
\begin{itemize}
\item If $m$ is finite, 
\begin{align*}
&\exists (z_1,\dots,z_m) \in\bbT^m ~\text{ pairwise distincts s.t.} 
\\
&(Y(z_1),\dots,Y(z_m))\ \mathrm{is\ non\ degenerated}.
\label{a:nonDegeneratem}
\tag{${\mathrm{ND}(m)}$}
\end{align*}
\item If $m = \infty$,
\begin{align*}
\notag
&\forall p \in \mathbb{N}^\star,\ \exists (z_1,\ldots,z_{p})\in{\bbT^{p}} ~\text{ pairwise distincts s.t.}\\
&(Y(z_1),\ldots,Y(z_{p}))\ \mathrm{is\ non\ degenerated}.
\label{a:nonDegenerateInfinite}
\tag{${\mathrm{\,ND}(\infty)}$}
\end{align*}
\end{itemize}
Recall that a Gaussian vector is called non-degenerated if its variance-covariance matrix is non-degenerated, \textit{i.e.,} it has full rank.

 \medskip
 
Some examples  of process $Y$ satisfying \eqref{a:Klm} and \eqref{a:nonDegeneratem} with $m=\infty$ are given by
 the normalized Brownian motion and any Gaussian stationary process  with a spectrum that admits an accumulation point, see~\cite[Page 203]{cramerstationary}. For instance, the process corresponding to the Super-Resolution problem satisfies \eqref{a:Klm} and~\eqref{a:nonDegeneratem} with $m$ finite, namely $m$ is  twice the number of observed frequencies, see Section~\ref{sec:SR}.
  
    \begin{definition} 
    \label{def:kl}
Let $Y$ be a Gaussian process with constant variance $\sigma^2  =\mathds V \mathrm{ar}(Y(\cdot))$  and  satisfying Assumptions~\eqref{a:Klm} and~\eqref{a:nonDegeneratem} with $m$ finite. The quantity 
	 \beq \notag
	 \widehat \sigma^2_{\mathrm{KL}}(Y):= \frac1m \sum_{i=1}^m \zeta_i^2,
	 \eeq
	 is called the   Karhunen-Lo\`eve estimator of $\sigma^2$.
	 \end{definition}
	 
\begin{remark}
An  explicit expression  of the estimator $\widehat \sigma ^2_{\mathrm{KL}}$ is always  possible  from {some} set of pairwise disjoint points $z_1,\ldots,z_{m'}$ with $m'\geqslant m$. We only need to check that the variance-covariance matrix of the $(Y(z_1),\ldots,Y(z_{m'}))$ has rank $m$.
\end{remark}
	 
\begin{remark}
Sufficiency considerations  imply that  $\widehat \sigma ^2_{\mathrm{KL}}$  is an optimal unbiased estimator for the mean-squared error by Rao–Blackwell theorem.
\end{remark} 

Given the aforementioned definition, we are now able to construct variance estimators for the process~$X$. We assume that the complex Gaussian process $Z$ that define $X$ satisfies the following hypotheses for some $N \in \mathbb{N}$.
\begin{equation*}
Z ~\text{admits a complex Karhunen-Lo\`eve expansion of order } N
\label{a:KLN}
\tag{$\mathrm{KL}_Z(N)$}
\end{equation*}
and satisfies the following non-degeneracy conditions:
\begin{align*}
&\forall (t_1,\ldots,t_{N})\in{[0,2\pi)^{N}} ~\text{ pairwise distincts,}\\
&(Z(t_1),Z(t_2),\ldots,Z(t_{N}))\ \mathrm{is\ non\ degenerated\ and}\tag{$\mathrm{ND}_Z(N)$} 
\label{a:nonDegeneratefiniteComplex}
\\
&(Z(t_1),Z'(t_1),Z(t_3),\ldots,Z(t_{N}))\ \mathrm{is\ non\ degenerated}.
\notag
\end{align*}

Our aim is to build, for each $z \in \bbT$, two estimators of the variance $\sigma^2$ independently from~$X(z)$ or $(X(z),X'(z))$. Indeed, in the following, we will distinguish two kind of statistics. The first one is the limit of the finite dimensional statistic $S^{\mathrm{Grid}}$, see Section \ref{s:asador}. The second one is the case of the maximum over $\bbT$, see Section \ref{s:riz}. Both cases won't use the same estimation of $\sigma^2$.
\begin{itemize}
\item In the grid situation, we define
                \beq
	    	    X_{\mathrm{norm}}^{z}(y) := \frac{X(y) - \rho(z - y) X(z)}{\sqrt{1 - \rho^2(z-y)}}\,,\notag
	    	    \eeq
	    	    where $y$ belongs to $\bbT\setminus\{z\}$, $\rho(\cdot)$ denotes the  correlation function of the process $X$ and~set
         \beq\notag
	     \widehat {\sigma}^2_{ z} := \widehat \sigma ^2_{\mathrm{KL}} \big(X_{\mathrm{norm}}^{z}(\cdot) \big) \eeq
	     which is well defined, independent of $X(z)$ and with constant variance $\sigma^2$. 
	     Finally, we consider the variance estimator
	     \beq \label{eq:sighatz} \widehat {\sigma}^2= \widehat {\sigma}^2_{\widehat z}, \eeq
	     defined at point $\hat z$ given by \eqref{def:hatz}.
\item In the continuous case, we define
	     \begin{equation}
	    	    X_{\mathrm{norm}}^{|z}(y) := \frac{X(y) - \rho(z - y) X(z) + \langle \rho'(z-y) , \widetilde \Lambda^{-1} X'(z) \rangle}{\sqrt{1 - \rho^2(z-y) + \langle \rho'(z-y), \widetilde \Lambda^{-1}\rho'(z-y) \rangle}}\,,\notag
	    	    \end{equation}
	    	    where $ y$ belongs to $\bbT\setminus\{z\}$ and set
		    	    	             \beq\notag
	    	    	     \widehat {\sigma}^2_{| z} := \widehat \sigma ^2_{\mathrm{KL}} \big(X_{\mathrm{norm}}^{|z}(\cdot) \big) \eeq
	    	    	     which is well defined, independent of  $(X(z),X'(z))$ and with constant variance $\sigma^2$. 
			     Finally, we consider the variance estimator
	    	    	     \beq \label{eq:sighatz2} \widehat {\sigma}_{|}^2= \widehat {\sigma}^2_{| \widehat z}\,, \eeq
			      defined at point $\hat z$ given by \eqref{def:hatz}.
\end{itemize}	    
		    
   \begin{proposition} 
   \label{prop:am}  Let $Z$ satisfy~\eqref{a:KLN} and \eqref{a:nonDegeneratefiniteComplex} and set $z \in \bbT$ then the following claims are true under~$\bbH_0$.
   \begin{itemize}
   \item[$(a)$] $\widehat {\sigma}^2_{z}$ is well defined and follows a  $\frac{\sigma^2 \chi^2_{2N-1}}{2N-1}$ distribution.
    \item[$(b)$] $\widehat {\sigma}^2_{|z}$ is well defined and follows a  $\frac{\sigma^2 \chi^2_{2N-3}}{2N-3}$ distribution.
     \item[$(c)$] The process $X_{\mathrm{norm}}^{z}(\cdot)/\widehat\sigma_z$ is independent of $\widehat\sigma_z$, and the process $X_{\mathrm{norm}}^{|z}(\cdot)/\widehat\sigma_{|z}$ is independent of the random variable $\widehat\sigma_{|z}$.
   \end{itemize}
   \end{proposition}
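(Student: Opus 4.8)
The plan is to reduce everything to a statement about a single real-valued Gaussian process, namely the process $X_{\mathrm{norm}}^{z}(\cdot)$ (resp.\ $X_{\mathrm{norm}}^{|z}(\cdot)$), and then invoke the standard Karhunen-Lo\`eve estimator theory assembled in Section~\ref{sec:VarianceEstimation} (Definition~\ref{def:kl} and the two surrounding remarks). The key observation is that, under Assumption~\eqref{a:KLN}, the complex process $Z$ has a complex Karhunen-Lo\`eve expansion of order $N$, so that $X(z) = \mathrm{Re}(e^{-\imath\theta}Z(t))$ inherits a \emph{real} Karhunen-Lo\`eve expansion of order $m = 2N$: writing $Z = \sum_{i=1}^N \xi_i g_i$ with $\xi_i$ i.i.d.\ complex centered with $\bbV\!\mathrm{ar}(\mathrm{Re}\,\xi_i) = \sigma^2$, splitting each $\xi_i$ and $g_i$ into real and imaginary parts produces $2N$ real coefficients each of variance $\sigma^2$ and $2N$ orthogonal real functions on $\bbT$, whose squared moduli sum to $1$ by \eqref{eq:normK}-type normalization. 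This establishes that $X$ satisfies \eqref{a:Klm} with $m = 2N$. The first step of the proof is therefore to carry out this real/imaginary bookkeeping carefully and record the constant-variance and partition-of-unity identities.

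The second step is to check that the two normalized processes $X_{\mathrm{norm}}^{z}$ and $X_{\mathrm{norm}}^{|z}$ are themselves Gaussian, centered (under $\bbH_0$), with \emph{constant} variance $\sigma^2$, and admit Karhunen-Lo\`eve expansions of the appropriate reduced order. For $X_{\mathrm{norm}}^{z}$: it is obtained from $X$ by a pointwise affine operation conditioning out the single Gaussian variable $X(z)$, hence it lives in the $L^2(\Omega)$-orthocomplement of $X(z)$; since $X(z)$ is a non-degenerate linear functional of the $m = 2N$ coefficients (using the non-degeneracy part of \eqref{a:nonDegeneratefiniteComplex}), the remaining process has a KL expansion of order $m-1 = 2N-1$. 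The normalization by $\sqrt{1-\rho^2(z-y)}$ is exactly what restores constant variance $\sigma^2$ (this is the standard regression-residual variance formula). Similarly $X_{\mathrm{norm}}^{|z}$ conditions out the \emph{three}-dimensional Gaussian vector $(X(z), X'(z))$ — here one uses the second line of \eqref{a:nonDegeneratefiniteComplex}, which guarantees $(Z(t),Z'(t),Z(t_3),\ldots,Z(t_N))$ is non-degenerate, hence $(X(z),X'(z))$ spans a $3$-dimensional subspace of the span of the $2N$ coefficients — so the residual process has KL order $2N - 3$, and the denominator $\sqrt{1-\rho^2(z-y)+\langle\rho'(z-y),\widetilde\Lambda^{-1}\rho'(z-y)\rangle}$ is the corresponding residual-variance normalization. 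One must verify $2N - 3 \geq 1$, i.e.\ $N \geq 2$, which is the standing hypothesis.

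The third step is then purely mechanical: apply Definition~\ref{def:kl} and the classical Cochran-type lemma. Once $X_{\mathrm{norm}}^{z}(\cdot)$ is known to be a centered Gaussian process satisfying \eqref{a:Klm} with $m' = 2N-1$ (and the non-degeneracy \eqref{a:nonDegeneratem}, inherited from \eqref{a:nonDegeneratefiniteComplex} by choosing suitable evaluation points, as in the Remark after Definition~\ref{def:kl}), the estimator $\widehat\sigma_z^2 = \widehat\sigma^2_{\mathrm{KL}}(X_{\mathrm{norm}}^{z})$ is by construction $\frac{1}{m'}\sum_{i=1}^{m'}\zeta_i^2$ with i.i.d.\ $\zeta_i \sim \mathcal N(0,\sigma^2)$, hence $m'\widehat\sigma_z^2/\sigma^2 \sim \chi^2_{m'} = \chi^2_{2N-1}$, giving $(a)$; the same argument with $m' = 2N - 3$ gives $(b)$. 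For $(c)$, the independence of the normalized process $X_{\mathrm{norm}}^{z}(\cdot)/\widehat\sigma_z$ from $\widehat\sigma_z$ is the standard fact that for a Gaussian vector $(\zeta_1,\ldots,\zeta_{m'})$ of i.i.d.\ $\mathcal N(0,\sigma^2)$ coordinates, the direction $\zeta/\|\zeta\|$ is independent of the norm $\|\zeta\|$; since every value $X_{\mathrm{norm}}^{z}(y)$ is a fixed linear combination of the $\zeta_i$'s and $\widehat\sigma_z$ is a function of $\|\zeta\|$ only, the claim follows (and identically for the $|z$ version).

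The only genuine obstacle is the second step, specifically the rank bookkeeping: one has to be sure that conditioning out $X(z)$, respectively $(X(z),X'(z))$, removes exactly $1$, respectively $3$, dimensions from the $2N$-dimensional coefficient space — neither fewer (which would need the vectors to be genuinely non-degenerate, supplied by \eqref{a:nonDegeneratefiniteComplex}) nor more (which is automatic). The translation of the \emph{complex} non-degeneracy hypothesis \eqref{a:nonDegeneratefiniteComplex} into the \emph{real} non-degeneracy needed here — including the slightly delicate point that $X'(z)$ is a $2$-dimensional object $(\partial_t X, \partial_\theta X)$ whose span, together with $X(z)$, must be $3$-dimensional — is where care is required; everything else is the textbook $\chi^2$ / direction-independent-of-norm argument.
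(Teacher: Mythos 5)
Your overall route is the same as the paper's: regress out $X(z)$ (resp.\ $(X(z),X'(z))$), show that the normalized residual process satisfies the Karhunen--Lo\`eve and non-degeneracy conditions of Section~\ref{sec:VarianceEstimation} with $m'=2N-1$ (resp.\ $2N-3$), and then conclude by the standard $\chi^2$ computation and the norm/direction independence of a Gaussian vector; parts $(a)$ and $(c)$ of your plan, and the real/imaginary bookkeeping giving \eqref{a:Klm} with $m=2N$ via the angle shift $\theta\mapsto\theta+\pi/2$, are essentially what the paper does.

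The gap is in part $(b)$: you assert that the residual after conditioning on the three-dimensional vector $(X(z),X'(z))$ ``has KL order $2N-3$'' and that non-degeneracy is ``inherited from \eqref{a:nonDegeneratefiniteComplex} by choosing suitable evaluation points,'' but this is precisely the step that is not automatic and that you leave undone (you yourself flag it as ``where care is required''). Definition~\ref{def:kl} requires condition $\mathrm{ND}(2N-3)$ for $X_{\mathrm{norm}}^{|z}$, i.e.\ the existence of $2N-3$ pairwise distinct \emph{points of $\bbT$} at which the evaluations of the residual process are jointly non-degenerate; knowing that the residual lives in a $(2N-3)$-dimensional Gaussian space does not by itself exhibit such points, and the complex hypothesis \eqref{a:nonDegeneratefiniteComplex} only speaks about $(Z(t_1),Z'(t_1),Z(t_3),\ldots,Z(t_N))$. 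The paper spends most of its proof of $(b)$ on exactly this: starting from the $2N$ points $(t_i,\theta_1)$, $(t_i,\theta_1+\pi/2)$, it shows that $\mathrm{Im}(e^{-\imath\theta_1}Z'(t_1))$ is a combination of the evaluations with coefficients $(\gamma_1,\gamma_2)\neq(0,0)$ on the two points above $t_2$, and uses the structural identity $\gamma_1 X(t_2,\theta_1)+\gamma_2 X(t_2,\theta_1+\pi/2)=\sqrt{\gamma_1^2+\gamma_2^2}\,X(t_2,\theta_1+\psi)$ to trade those two points for the single new point $(t_2,\theta_1+\psi)$, thereby producing an explicit family of $2N-3$ points at which $X_{\mathrm{norm}}^{|z}$ is non-degenerate (and then Gram--Schmidt gives $\mathrm{KL}(2N-3)$). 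Your argument would need either this construction or a substitute (e.g.\ an abstract argument that the point evaluations of $X$ span the full $2N$-dimensional Gaussian space, that the regression residuals are their orthogonal projections onto the $(2N-3)$-dimensional orthocomplement, and that one can then extract $2N-3$ linearly independent point evaluations); as written, the well-definedness of $\widehat\sigma^2_{|z}$ in claim $(b)$ is not established.
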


	      \begin{proof} $(a)$ Fix $z = (t_1,\theta_1) \in \bbT$. Since $Z$ satisfies \eqref{a:nonDegeneratefiniteComplex}, there exists $(t_2,\ldots ,t_{N})\in[0,2\pi)^{N-1}$ pairwise different such that $(Z(t_1),Z(t_2),\ldots,Z(t_{N}))$ is non degenerated. Then, considering $z_1 = z$, $z_{N+1} = z + (0,\pi/2)$ and
	      $$
	      \forall i \in \{2,\dots,N\}, ~ z_i = (t_i,\theta_1) ~\text{ and }~ z_{N+i} = (t_i,\theta_1 + \pi/2),
	      $$
	      the vector $V_1 := (X(z_1),\dots,X(z_{2N}))$ satisfies
	      \begin{align*}
	      2N = & \rank(X(z_1),X(z_2),\dots,X(z_{2N})) \\
	      =& \rank(X(z_1),X_{\mathrm{norm}}^{z}(z_2),\ldots,X_{\mathrm{norm}}^{z}(z_{2N})) \\
	      =& ~ 1 + \rank(X_{\mathrm{norm}}^{z}(z_2),\ldots,X_{\mathrm{norm}}^{z}(z_{2N}))
	      \end{align*}
	      where $\rank$ denotes the rank of the covariance matrix of a random vector. Deduce that~$X_{\mathrm{norm}}^{z}(\cdot)$ satisfies $({\mathrm{ND}(2N-1)})$. This, in turn, implies that the $2N$  functions 
	      $$
	      g_i(\cdot) = f_i(\cdot) -\rho(z-\cdot) f_i(z)
	      $$
	        are in fact in a space of dimension $2N-1$ and a Gram-Schmidt orthogonalization in $\bbL^2(\bbT)$ gives  $({\mathrm{KL}(2N-1)})$ for the process $X_{\mathrm{norm}}^{z}(\cdot)$. Finally, from $(X_{\mathrm{norm}}^{z}(z_2),\ldots,X_{\mathrm{norm}}^{z}(z_{2N}))$, 	        we compute $\widehat {\sigma}_{z}^2$ that follows the desired distribution.
	        
\medskip 

	        $(b)$ In the case of the regression over $(X(z),X'(z))$, remark that
	        $$
	        \partial_{\theta} X(z) = X(t_1,\theta_1+\pi/2) = X(z_2)
	        $$
	        and $\partial_{t} X(z) = \mathrm{Re}(e^{-\imath\theta_1} Z'(t_1))$ where $\partial_{\theta}$ (resp.~$\partial_{t}$) denote the partial derivative with respect to $\theta$ (resp.~$t$). Because of hypothesis \eqref{a:nonDegeneratefiniteComplex}, the two vectors $V_1$ and 
	        $$
	        V_2 := (X(z_1),X(z_2),\mathrm{Re}(e^{-\imath\theta_1} Z'(t_1)),\mathrm{Im}(e^{-\imath\theta_1} Z'(t_1)),X(z_5),\dots,X(z_{2N}))
	        $$
	        have rank $2N$ so both are invertible functions of $(\mathrm{Re}(\zeta_1),\mathrm{Im}(\zeta_1),\dots,\mathrm{Re}(\zeta_N),\mathrm{Im}(\zeta_N))$. In particular, $\mathrm{Im}(e^{-\imath \theta_1} Z'(t_1))$ is a linear combination of $V_2$. Let $\gamma_1$ and $\gamma_2$ be the coefficients associated to $X(z_3)$ and~$X(z_4)$. By triangular combination, we deduce that the distribution~of
	        $$
	        (X(z_1),\partial_\theta X(z_1),\partial_t X(z_1), \gamma_1 X(z_3) + \gamma_2 X(z_4),X(z_5),\dots,X(z_{2N}))
	        $$
	        is non-degenerated and so that $(\gamma_1,\gamma_2) \neq (0,0)$. Setting $\psi$ such that
	        $$
	        \cos(\psi) = \frac{\gamma_1}{\sqrt{\gamma_1^2 + \gamma_2^2}} ~~ \text{and} ~~ \sin(\psi) = \frac{\gamma_2}{\sqrt{\gamma_1^2 + \gamma_2^2}}
	        $$
	        we get the non-degeneracy of
	        $$
	        (X(z_1),\partial_\theta X(z_1),\partial_t X(z_1), X(z_{2N+1}),X(z_5),\dots,X(z_{2N}))
	        $$
	        where $z_{2N+1} = (t_2,\theta_1 + \psi)$. Finally, similarly to the proof of the previous point, regression, scaling and independence prove that the rank of $(X_{\mathrm{norm}}^{|z}(z_5),\dots,X_{\mathrm{norm}}^{|z}(z_{2N+1}))$ is $2N-3$ so that $X_{\mathrm{norm}}^{|z}(\cdot)$ satisfies $\mathrm{KL}(2N-3)$ and $\mathrm{ND}(2N-3)$ and that $\widehat {\sigma}_{|z}^2$ is well defined and distributed as $\frac{\sigma^2 \chi^2_{2N-3}}{2N-3}$.

\medskip

	       $(c)$ This is a direct consequence of the independence of the angle and the norm for each marginal Gaussian vector build from $X_{\mathrm{norm}}^z$ or $X_{\mathrm{norm}}^{|z}$.
	     \end{proof}

\begin{remark}
When the complex process $Z$ admits an infinite Karhunen-Lo\`eve decomposition, we need the following modified hypothesis
\begin{align*}
&\forall p \in \mathbb{N}^\star,\ \forall (t_1,\ldots,t_{p})\in{[0,2\pi)^{p}} ~\text{ pairwise distincts,}\\
&(Z(t_1),Z(t_2),\ldots,Z(t_{p}))\ \mathrm{is\ non\ degenerated\ and} \label{a:nonDegenerateinfiniteComplex}
\tag{$\mathrm{ND}_Z(\infty)$}
\\
&(Z(t_1),Z'(t_1),Z(t_3),\ldots,Z(t_{p}))\ \mathrm{is\ non\ degenerated}.
\end{align*}
Indeed, for every enter $p\geq1$, note that from  the observation  of the vector $(Z(t_1), Z(t_2),\ldots, Z(t_p))$ $($resp. $(Z(t_1),Z'(t_1),\dots,Z(t_p)))$ for pairwise disjoint points $t_1,\dots,t_p$, we can construct an estimator, say~$ \widehat \sigma^2_{2p}$ $($resp. $ \widehat \sigma^2_{|2p})$, of $\sigma^2$ with  distribution $ \sigma^2 \chi^2_{2p-1}/(2p-1)$ $($resp.~$ \sigma^2 \chi^2_{2p-3}/(2p-3))$ under~$\bbH_0$. Making~$p$ tend  to infinity, classical concentration inequalities and Borel-Cantelli lemma  prove that~$ \widehat \sigma^2_{2p}$ $($resp.~$ \widehat \sigma^2_{|2p})$  converges almost surely to $\sigma^2$ under~$\bbH_0$. Thus the variance $\sigma^2$  is directly observable from the entire path of~$X$. We still denote  $\widehat \sigma ^2_{z}$ $($resp.~$ \widehat \sigma^2_{|z})$ this observation, where $z= z_1 = (t_1,\theta_1)$. 
\end{remark}

\subsubsection{Computing the Joint Law}

Hence, suppose that we observe $X = \sigma   \widetilde X$ where $\sigma >0 $ is unknown. Assume that $Z$ satisfies~\eqref{a:KLN} and \eqref{a:nonDegeneratefiniteComplex}, and set $m=2N$. The regression of the Hessian on $(X(z),X'(z))$ reads now 
$$
\forall z\in\bbT,\quad
X''(z) = - \widetilde \Lambda X(z)  + \sigma  \widetilde R(z).
$$
because $X'(z)$ is independent of $(X(z),X''(z))$ by stationarity. The variance being unknown, we estimate it using $\widehat{\sigma}_{|}^2$ which is defined by~\eqref{eq:sighatz2}. For fixed $z\in\bbT$, by Claims~$(b)$ and $(c)$ of Proposition \ref{prop:am}, we know that the following random variables or random processes 
 $$
 X(z) \,,  X'(z) \,, \ \frac{X_{\mathrm{norm}}^{|z}(\cdot)}{\widehat \sigma_{|z}} \mbox{ and } \ \widehat \sigma_{|z}
 $$
 are mutually independent. As $X_{\mathrm{norm}}^{|z}(\cdot) = h_z(\cdot)\, X^{|z}(\cdot)$ where $h_z(\cdot)$ is a deterministic function and as Lemma \ref{l:jm1} shows that~$R(z)$ can be expressed as radial limits of  $X^{|z}(\cdot)$ at point $z$, we get that 
  $$
 X(z) \,, X'(z) \,, \ \Big( \frac{X^{|z}(\cdot)}{\widehat \sigma_{|z}}, \frac{R(z)}{\widehat \sigma_{|z}}  \Big) \mbox{ and } \ \widehat \sigma_{|z} \mbox{ are mutually independent,}
 $$
and by consequence
$$
 X(z) \,, X'(z) \,,\ \Big( \frac{\lambda_2^z}{\widehat \sigma_{|z}}, \frac{R(z)}{\widehat \sigma_{|z}} \Big) \mbox{ and } \ \widehat \sigma_{|z} \mbox{ are mutually independent.}
 $$
We turn now to the Rice formula described previously and introduce the notation 
$$ 
T_{2,z}:=    \frac{\lambda_2^z}{\widehat \sigma_{|z}} \  \mathrm{and} \ \ T_2:= T_{2, \widehat z }.
$$
Denote $\mathrm{Leb}(\bbR^2)$ the Lebesgue measure on $\bbR^2$ and let $ \bar{\mu}_1$  be the joint law of the couple of random variables~$(T_{2,0}, {R(0)}/{\widehat \sigma_{|0}})$. 
Under $\mathbb H_0$, note that $X(0)$ is a centered Gaussian variable with variance~$\sigma^2$ and~${\widehat \sigma_{|0}}/{\sigma}$ is distributed as a $chi$-distribution with $m-3$ degrees of freedom, {\it i.e.}, the law of density
$$
f_{\chi_{m-3}}(s) = \frac{2^{1-\frac{m-3}{2}}}{\bar{\Gamma}\left(\frac{m-3}{2}\right)} s^{m-4} \exp(-s^2/2)
$$
where $\bar{\Gamma}$ is the Gamma function. Then the  quadruplet $(X(0), {\widehat \sigma_{|0}}/{\sigma},T_{2,0}, {R(0)}/{\widehat \sigma_{|0}})$ has a density with respect to $\mathrm{Leb}(\bbR^2)\otimes\bar\mu_1$ at point $(\ell_1,s,t_2,r)\in\bbR^3\times\bbS$
 equal to
 $$
 \mathtt{(cst)}\, s^{m-4} \exp\left(-\frac{s^2(m-3)}{2}\right)  \sigma^{-1} \phi(\sigma^{-1}\ell_1).
 $$ 
 Using the same method as for the proof of Proposition 
 \ref{prop:Rice_density_known_variance} we have the following proposition.

\begin{proposition}
\label{prop:Rice_density_unknown_variance}
Assume that $Z$ satisfies \eqref{e:Normalization}, \eqref{e:NonDegenerated}, \eqref{a:KLN} and \eqref{a:nonDegeneratefiniteComplex}, and set $m=2N$. Then, under~$\bbH_0$, the joint distribution of  $ \big(\lambda_1, { \widehat{ \sigma}_{|}}/{\sigma},T_2, {R(\widehat z)}/{ \widehat \sigma}_{|} \big)$  has a density with respect to $\mathrm{Leb}(\bbR^2)\otimes \bar \mu_1$ at point $(\ell_1,s,t_2,r)\in\bbR^3\times\bbS^+$
 equal to
 $$
\mathtt{(cst)} \det(- \widetilde \Lambda \ell_1+ \sigma s r)\, s^{m-4} \exp\left(-\frac{s^2(m-3)}{2}\right) \phi(\sigma^{-1} \ell_1) \1_{\{0<\sigma s t_2 < \ell_1\}},
 $$
 where $\mathtt{(cst)}$ is a positive constant that may depend on $m$ and $\sigma$.
\end{proposition}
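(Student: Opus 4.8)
\noindent
The plan is to rerun the Rice-formula-with-weights argument of Proposition~\ref{prop:Rice_density_known_variance}---the one that produced~\eqref{eq:BigStepRice}---essentially verbatim, carrying the extra coordinate $\widehat\sigma_{|}/\sigma$ through every step. Fix a Borel set $B$ on $\bbR^3\times\bbS$, and for a critical point $z$ (that is, $X'(z)=0$) set $Q(z):=\big(X(z),\widehat\sigma_{|z}/\sigma,T_{2,z},R(z)/\widehat\sigma_{|z}\big)$; by~\eqref{eq:sighatz2} one has $Q(\widehat z)=\big(\lambda_1,\widehat\sigma_{|}/\sigma,T_2,R(\widehat z)/\widehat\sigma_{|}\big)$. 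As in the bullet list before Proposition~\ref{prop:Rice_density_known_variance}, on $\{X'(z)=0\}$ the identity $\1_{\{z=\widehat z\}}=\1_{\{0<\lambda_2^z<X(z)\}}$ holds almost surely; since $\widehat z$ is a.s.\ the unique maximiser (Proposition~\ref{prop:Unicity}) and the critical set is a.s.\ finite under~\eqref{e:NonDegenerated}, I obtain
\[
\sum_{z:X'(z)=0}\1_{\{Q(z)\in B\}\cap\{0<\lambda_2^z<X(z)\}}=\1_{\{Q(\widehat z)\in B\}}\,.
\]
Note that, because $\lambda_2^z=\widehat\sigma_{|z}\,T_{2,z}=\sigma\,(\widehat\sigma_{|z}/\sigma)\,T_{2,z}$, the selection event $\{0<\lambda_2^z<X(z)\}$ is itself a function of $Q(z)$ for the fixed $\sigma$, so the approximation scheme below makes sense.

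\noindent
Next I would choose smooth lower approximations $\varphi^{(n)}_B$ on $\bbR^3\times\bbS$ converging to $\1_{\{(\ell_1,s,t_2,r)\in B\}}\1_{\{0<\sigma s t_2<\ell_1\}}$ and apply the Rice formula with weights \cite[Theorem~6.4]{Azais_Wschebor_09} to $\E\big[\sum_{z:X'(z)=0}\varphi^{(n)}_B(Q(z))\big]$. The weight at a critical point is $|\det X''(z)|$, and since $X''(z)=-\widetilde\Lambda X(z)+R(z)$ with $R(z)=\widehat\sigma_{|z}\big(R(z)/\widehat\sigma_{|z}\big)$, the integrand is a fixed function of $\big(X(z),X'(z),\widehat\sigma_{|z},\lambda_2^z,R(z)/\widehat\sigma_{|z}\big)$. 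By Proposition~\ref{prop:am}(b)--(c) and the mutual-independence statement following it, the variables $X(z)$, $X'(z)$, $(T_{2,z},R(z)/\widehat\sigma_{|z})$ and $\widehat\sigma_{|z}$ are mutually independent, and $X'(z)$ is independent of all the others; hence the conditioning on $\{X'(z)=0\}$ produced by Rice's formula is harmless, and by stationarity the resulting integrand no longer depends on $z$. Passing to the monotone limit $n\to\infty$ (a.s.\ finiteness of $\lambda_2^z$ being Lemma~\ref{l:jm:l1}) gives, exactly as in~\eqref{eq:BigStepRice},
\[
\P\{Q(\widehat z)\in B\}=\mathtt{(cst)}\,\E\Big[\det\!\big(-\widetilde\Lambda X(0)+R(0)\big)\,\1_{\{0<\lambda_2^0<X(0)\}}\1_{\{Q(0)\in B\}}\Big]\,,
\]
where, as in Proposition~\ref{prop:Rice_density_known_variance}, on $\{0<\lambda_2^0<X(0)\}$ the matrix $-X''(0)=\widetilde\Lambda X(0)-R(0)$ lies in $\bbS^+$, which removes the absolute value.

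\noindent
Finally I would substitute the law of $Q(0)=\big(X(0),\widehat\sigma_{|0}/\sigma,T_{2,0},R(0)/\widehat\sigma_{|0}\big)$ established just above the statement, namely the density $\mathtt{(cst)}\,s^{m-4}\exp(-s^2(m-3)/2)\,\sigma^{-1}\phi(\sigma^{-1}\ell_1)$ with respect to $\mathrm{Leb}(\bbR^2)\otimes\bar\mu_1$. Writing $\widehat\sigma_{|0}=\sigma s$ and $R(0)/\widehat\sigma_{|0}=r$, using that we are dealing with $2\times2$ matrices so that $\det(-\widetilde\Lambda X(0)+R(0))=\det(-\widetilde\Lambda\ell_1+\sigma s r)$, and noting $\lambda_2^0=\sigma s t_2$ converts $\{0<\lambda_2^0<X(0)\}$ into $\{0<\sigma s t_2<\ell_1\}$, one reads off exactly the announced density, which ends the proof. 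I expect the only genuinely delicate point---and it is already settled, by Proposition~\ref{prop:am} together with Lemma~\ref{l:jm1}, which expresses $R(z)$ as radial limits of $X^{|z}(\cdot)$ so that the independence structure of Proposition~\ref{prop:am} really does apply to the Rice-formula integrand---to be this measurability and independence bookkeeping; there is no new analytic obstacle beyond the known-variance case, so the actual write-up should be short.
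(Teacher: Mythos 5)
Your argument is correct and follows essentially the same route as the paper: the paper proves this proposition by invoking the mutual independence of $X(z)$, $X'(z)$, $(T_{2,z},R(z)/\widehat\sigma_{|z})$ and $\widehat\sigma_{|z}$ (via Proposition~\ref{prop:am} and Lemma~\ref{l:jm1}), computing the density of the quadruplet with respect to $\mathrm{Leb}(\bbR^2)\otimes\bar\mu_1$, and then rerunning the Rice-formula-with-weights computation of \eqref{eq:BigStepRice} verbatim, which is exactly what you do. Your write-up merely makes explicit the steps the paper compresses into ``using the same method as for the proof of Proposition~\ref{prop:Rice_density_known_variance}''.
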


\noindent
Consequently, we derive the following result.
 \begin{theorem}
 \label{thm:rice_unknown_variance}
 Assume that $Z$ satisfies \eqref{e:Normalization}, \eqref{e:NonDegenerated}, \eqref{a:KLN} and \eqref{a:nonDegeneratefiniteComplex}, and set $m=2N$. For all $r\in\mathbb S^+$, define $ \overline H_r(\cdot)$ as 
  $$
  \forall \ell>0,\quad
  \overline H_r(\ell) := \int_{\ell} ^{+\infty}\! \det\big( - \widetilde \Lambda t_1 + r \big)  f_{m-1}\left(t_1 \sqrt{\frac{m-1}{m-3}} \right)\, \mathrm dt_1,
  $$
  where $f_{m-1}$ is the density of the Student distribution with $m-1$ degrees of freedom. Under the null $\mathds{H}_0$, the test statistic  
 $$
   T^{\mathrm{Rice}} :=\frac{\overline  H_{R(\widehat  z)} (T_1)}{  \overline  H_{R(\widehat z)} (T_2)} \sim \mathcal U([0,1]),
  $$
  where $T_1 := \lambda_1/\widehat\sigma_{|}$, $T_2 = \lambda_2/\widehat\sigma_{|}$ and $\widehat\sigma_{|}$ is defined by \eqref{eq:sighatz2}.

    \end{theorem}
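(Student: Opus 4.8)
The plan is to follow the argument of Theorem~\ref{thm:rice_known_variance} almost verbatim, replacing Proposition~\ref{prop:Rice_density_known_variance} by Proposition~\ref{prop:Rice_density_unknown_variance}. Concretely, I would: (i) pass from the joint law of $(\lambda_1,\widehat\sigma_{|}/\sigma,T_2,R(\widehat z)/\widehat\sigma_{|})$ given there to the joint law of the studentized triple $(T_1,T_2,R(\widehat z)/\widehat\sigma_{|})$ by a change of variables and an integration over $\widehat\sigma_{|}/\sigma$; (ii) recognize the resulting factor as a Student density evaluated at the rescaled argument $t_1\sqrt{(m-1)/(m-3)}$; and (iii) identify $\overline H_r$ with the conditional survival function of $T_1$ given $(T_2,R(\widehat z)/\widehat\sigma_{|})$, so that the probability integral transform concludes. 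All the genuine probabilistic content — Rice's formula with weights, and the mutual independence of $X(\widehat z)$, $X'(\widehat z)$, $(\lambda_2/\widehat\sigma_{|},R(\widehat z)/\widehat\sigma_{|})$ and $\widehat\sigma_{|}$, see Proposition~\ref{prop:am}$(c)$ — is already packaged into Proposition~\ref{prop:Rice_density_unknown_variance}; what remains is a deterministic computation.

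First I would start from Proposition~\ref{prop:Rice_density_unknown_variance}: the density of $(\lambda_1,\widehat\sigma_{|}/\sigma,T_2,R(\widehat z)/\widehat\sigma_{|})$ at $(\ell_1,s,t_2,r)$ with respect to $\mathrm{Leb}(\bbR^2)\otimes\bar\mu_1$ is
\[
\mathtt{(cst)}\,\det(-\widetilde\Lambda\ell_1+\sigma s\, r)\,s^{m-4}\exp(-s^2(m-3)/2)\,\phi(\sigma^{-1}\ell_1)\,\1_{\{0<\sigma s\, t_2<\ell_1\}}\,.
\]
I then change variables at fixed $(s,t_2,r)$ from $\ell_1$ to the studentized value $t_1$ via $\ell_1=\widehat\sigma_{|}\,t_1=\sigma s\, t_1$. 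The Jacobian is $\sigma s$; homogeneity of the $2\times2$ determinant gives $\det(-\widetilde\Lambda\,\sigma s\,t_1+\sigma s\, r)=(\sigma s)^2\det(-\widetilde\Lambda t_1+r)$; moreover $\phi(\sigma^{-1}\sigma s\, t_1)=\phi(s t_1)$ and the indicator becomes $\1_{\{0<t_2<t_1\}}$. Since $\mathtt{(cst)}$ is allowed to depend on $\sigma$ and $m$, the powers of $\sigma$ are harmless, and the joint density of $(T_1,\widehat\sigma_{|}/\sigma,T_2,R(\widehat z)/\widehat\sigma_{|})$ at $(t_1,s,t_2,r)$, still with respect to $\mathrm{Leb}(\bbR^2)\otimes\bar\mu_1$, becomes $\mathtt{(cst)}\,s^{m-1}\det(-\widetilde\Lambda t_1+r)\exp(-s^2(m-3)/2)\,\phi(s t_1)\,\1_{\{0<t_2<t_1\}}$.

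Next I would integrate out $s\in(0,\infty)$. Using $\phi(st_1)=(2\pi)^{-1/2}\exp(-s^2 t_1^2/2)$ and the substitution $s=v/\sqrt{m-3+t_1^2}$, one gets $\int_0^\infty s^{m-1}\exp(-s^2(m-3+t_1^2)/2)\,\mathrm ds=c_m\,(m-3+t_1^2)^{-m/2}$ with $c_m$ depending only on $m$. Hence $(T_1,T_2,R(\widehat z)/\widehat\sigma_{|})$ has density $\mathtt{(cst)}\,\det(-\widetilde\Lambda t_1+r)\,(m-3+t_1^2)^{-m/2}\,\1_{\{0<t_2<t_1\}}$ with respect to $\mathrm{Leb}(\bbR)\otimes\bar\mu_1$. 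The point of the rescaling $t_1\mapsto t_1\sqrt{(m-1)/(m-3)}$ in the definition of $\overline H_r$ is exactly that it turns the Student factor $(1+x^2/(m-1))^{-m/2}$ into a constant-in-$m$ multiple of $(m-3+t_1^2)^{-m/2}$, because evaluating at $x=t_1\sqrt{(m-1)/(m-3)}$ gives $1+x^2/(m-1)=(m-3+t_1^2)/(m-3)$. Thus, conditionally on $(T_2,R(\widehat z)/\widehat\sigma_{|})=(t_2,r)$, the density of $T_1$ is proportional in $t_1$ to $\det(-\widetilde\Lambda t_1+r)\,f_{m-1}(t_1\sqrt{(m-1)/(m-3)})\,\1_{\{t_1>t_2\}}$, whose total mass over $\{t_1>t_2\}$ is precisely $\overline H_r(t_2)$; so the conditional survival function of $T_1$ is $\ell\mapsto\overline H_r(\ell)/\overline H_r(t_2)$. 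By the probability integral transform applied to this continuous survival function, $\overline H_r(T_1)/\overline H_r(t_2)\sim\mathcal U([0,1])$ conditionally on $(T_2,R(\widehat z)/\widehat\sigma_{|})=(t_2,r)$; since $T_2=t_2$ on this event and the conditional law is free of $(t_2,r)$, I conclude $T^{\mathrm{Rice}}=\overline H_{R(\widehat z)}(T_1)/\overline H_{R(\widehat z)}(T_2)\sim\mathcal U([0,1])$, the Hessian argument in the subscript being understood, consistently with Proposition~\ref{prop:Rice_density_unknown_variance}, as the studentized residual $R(\widehat z)/\widehat\sigma_{|}$.

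I expect the proof to be essentially bookkeeping, so the ``hard part'' is only to keep the constants straight: tracking the factors $\sigma$, $\widehat\sigma_{|}$ and the $m$-dependent constants through the change of variables and the $s$-integral, and checking that the determinant weight $\det(-\widetilde\Lambda t_1+r)$ is nonnegative on the support — which holds because on the event $\{0<\lambda_2<\lambda_1\}$ one has $-X''(\widehat z)=\widetilde\Lambda\lambda_1-R(\widehat z)\in\bbS^+$, hence $\widetilde\Lambda t_1-r\in\bbS^+$ for all $t_1\ge T_1$ — together with $\overline H_r(T_2)>0$ almost surely, which follows from $T_1>T_2>0$ a.s.\ by Proposition~\ref{prop:Unicity}. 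Finally, the case $m=\infty$ is not reached by the computation above and must be treated separately by the limiting argument indicated in the remark, the Student density being replaced by the standard normal one.
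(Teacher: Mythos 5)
Your proposal is correct and follows essentially the same route as the paper: change of variables $\ell_1=\sigma s\,t_1$ in Proposition~\ref{prop:Rice_density_unknown_variance}, integration over $s$ to produce the factor $f_{m-1}\bigl(t_1\sqrt{(m-1)/(m-3)}\bigr)$, and then the probability integral transform exactly as in Theorem~\ref{thm:rice_known_variance}. The only difference is cosmetic — you evaluate the $s$-integral directly as a Gaussian-type integral and recognize $(m-3+t_1^2)^{-m/2}$ as the rescaled Student density, whereas the paper invokes the ratio-density formula with the $\chi_{m-1}$ density — and your reading of the subscript $R(\widehat z)$ as the studentized residual $R(\widehat z)/\widehat\sigma_{|}$ matches what the paper's proof actually establishes.
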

\begin{proof}
First, using Proposition \ref{prop:Rice_density_unknown_variance} and the change of variable $t_1 = \frac{\ell_1}{\sigma s}$, the joint distribution of the quadruplet $(T_1, { \widehat{ \sigma}_{|}}/{\sigma},T_2, {R(\widehat z)}/{ \widehat \sigma}_{|}))$ at point $(t_1,s,t_2,r)$ is given by
\begin{align*}
& \mathtt{(cst)} \det(\sigma s (- \widetilde \Lambda t_1 + r)) s^{m-3} \exp\left(-\frac{s^2(m-3)}{2}\right) \phi(s t_1) \1_{\{0<t_2 <t_1\}} \\
&= \mathtt{(cst)} \det(- \widetilde \Lambda t_1 + r) s^{m-1} \exp\left(-\left(s \sqrt{\frac{m-3}{m-1}}\right)^2 \frac{m-1}{2}\right) \phi(s t_1) \1_{\{0<t_2 <t_1\}}.
\end{align*}
Second, note that if $X$ and $Y$ are two independent random variables of density $f_X$ and $f_Y$ then the density of $X/Y$ satisfies
$$ f_{X/Y}(z) = \int_{\mathbb{R}} f_X(z y) y f_Y(y) \mathrm{dy}.
$$
In our case, integrating over $s$ and with the change of variable $s\leftarrow s\sqrt{(m-1)/(m-3)}$, it holds
\begin{align*}
& \int_{\mathbb{R}^+} \phi(s t_1) s^{m-1} \exp\left[- \Bigg(s\sqrt{\frac{m-3}{m-1}}\Bigg)^2 \frac{m-1}{2}\right] \mathrm{d}s \\
&= \mathtt{(cst)} \int_{\mathbb{R}^+} \phi\left(s t_1 \sqrt{\frac{m-1}{m-3}} \right) s{ s^{m-2} \exp\left[-\frac{s^2 (m-1)}{2}\right]}\mathrm{d}s \\
&= \mathtt{(cst)} \int_{\mathbb{R}^+} \phi\left(s t_1 \sqrt{\frac{m-1}{m-3}} \right) s f_{\frac{\chi_{m-1}}{\sqrt{m-1}}}(s) \mathrm{d}s \\
&= f_{m-1}\left(t_1 \sqrt{\frac{m-1}{m-3}} \right).
\end{align*}
Putting together, the density of $(T_1,T_2,R(\widehat{z})/\hat{\sigma})$ at point $(t_1,t_2,r)$ is now given by
$$ \mathtt{(cst)} \det(-\widetilde \Lambda t_1 + r) f_{m-1}\left( t_1 \sqrt{\frac{m-1}{m-3}} \right) \1_{\{0<t_2 <t_1\}}\,,
$$
and we conclude using the same trick as the one of Theorem \ref{thm:rice_known_variance}.
\end{proof}
  
  
  \section{Applications to the Super-Resolution Theory }

\label{sec:SR}

\subsection{Framework and results}
Deconvolution over the space of complex-valued Radon measure has recently attracted a lot of attention in the ``Super-Resolution'' community\textemdash and its companion formulation in ``Line spectral estimation''. A standard aim is to recover fine scale details of an image from few low frequency measurements\textemdash ideally the observation is given by a low-pass filter. The novelty in this body of work relies on new theoretical guarantees of the $\ell_{1}$-minimization over the space of Radon measures with finite support. Some recent works on this topic can be found in the papers \cite{DeCastro_Gamboa_12,Bredis_Pikkarainen_13,Tang_Bhaskar_Shah_Recht_13,Candes_FernandezGranda_14,Azais_DeCastro_Gamboa_15,FernandezGranda_13,Bendory_Dekel_Feuer_14,Duval_Peyre_JFOCM_15} and references therein. 

An important example throughout this paper is given by the Super-Resolution problem which can be stated as follows. Let $\nu^0\in(\mathcal M([0,2\pi),\mathds C),\|\cdot \|_{1})$ a complex-valued Radon measure on the one dimensional torus identified to $[0,2\pi)$ equipped with the natural circle-wise metric. Note that $||\cdot||_1$ denotes the total variation norm on $\mathcal M([0,2\pi))$. The space $(\mathcal M([0,2\pi),\mathds C),\|\cdot\|_{1})$ can be defined as the topological dual space of continuous functions on $[0,2\pi)$ equipped with the $L^\infty$-norm. 

Let $N=2f_c+1$ where $f_c\geq1$ is referred to as the ‘‘frequency cut-off''. Denote by $\mathbf D_{N}$ the Dirichlet kernel defined by
\[
\forall t\in[0,2\pi),\quad\mathbf D_{N}(t):=\frac{\sin(Nt/2)}{\sin(t/2)}.
\]
Consider the linear operator $\mathcal F_N:\mathcal M([0,2\pi),\mathds C)\to\mathds C^N$ that maps any complex-valued Radon measure~$\nu$ onto its Fourier coefficients $c_k(\nu)$ where
\[
c_k(\nu):=\int_{\bbT}\exp(-\imath kx)\nu(\mathrm dx)
\] 
for integers $k$ such that $|k|\leqslant f_c$. Consider $\zeta = (\zeta_k)_k$ where $\zeta_k=\zeta_{1,k}+\imath \zeta_{2,k}$ and $\zeta_{\ell,k}$ are i.i.d. standard Gaussian random variables for $|k|\leqslant f_c$ and $\ell=1,2$. In the Super-Resolution frame, we observe a perturbed version of the Fourier coefficients, namely
\eq
\notag
y=\frac{1}{\sqrt N}\mathcal F_N(\nu^0)+\sigma\zeta\,.
\qe
Applying $\mathcal F^\star_N$\textemdash the dual operator of $\mathcal F_N$, remark that we observe the trigonometric polynomial 
\[
Z:=\frac{1}{\sqrt N}\mathcal F^\star_N(y)
\] 
which reads as
\eq
\label{eq:CorrZSR}
\forall t\in[0,2\pi),\quad Z(t)=\frac1N\int_{\bbT}\mathbf D_N(t-x)\nu^0(\mathrm dx)+\sigma\sum_{k=-f_c}^{f_c}\frac{1}{\sqrt N}\zeta_k\exp(\imath kt).
\qe
Hence, one observes $Z$ and infers on $\nu^0$ assuming that it has finite support. To this purpose, consider the process $X$ defined for all $(t,\theta)\in\bbT$ by
\eq
\label{eq:X_SR}
X(t,\theta):=\mathrm{Re}(e^{-\imath\theta}Z(t))=\cos(\theta)\,\mathrm{Re}(Z(t))+\sin(\theta)\,\mathrm{Im}(Z(t)),
\qe
where $\mathrm{Re}$ and $\mathrm{Im}$ denote the real and imaginary part of a complex number. When $\nu^0 \equiv 0$, remark that the processes $A_1=\mathrm{Re}(Z)$ and $A_2=\mathrm{Im}(Z)$ are two independent and identically distributed real-valued processes  with $\mathcal C^\infty$-paths. An elementary computation shows that $X$ has correlation function $\rho$ and $A_1$ has correlation function $\Gamma$ with
\begin{align*}
\rho(z-y)&=\cos(\theta-\alpha)\Gamma(t-s)\quad\mbox{where}\quad\Gamma(t-s)={\mathbf D_{N}(t-s)}/N
\end{align*}
for all $z=(t,\theta)$ and $y=(s,\alpha)$ in $\bbT$. Remark that \eqref{e:Normalization} holds true for $\Gamma$. In this case, we are testing 
\eq
\notag
\mathds H_0: \mbox{``}\mathcal F^\star_N(\mathcal F_N(\nu^0))\equiv 0\,\mbox{''} \quad\text{against}\quad  \mathds H_1: \mbox{``}\exists t \in [0,2\pi),~ \mathcal F^\star_N(\mathcal F_N(\nu^0))(t)\neq0\,\mbox{''}\,,
\qe
or equivalently
\eq
\notag
\mathds H_0: \mbox{``}\nu^0\equiv 0\,\mbox{''} \quad\text{against}\quad  \mathds H_1: \mbox{``}\exists t \in [0,2\pi),~ \nu^0(t)\neq0\,\mbox{''}\,.
\qe
Subtracting the known measure $\nu^0$, remark that this framework encompasses testing problem whose null hypothesis is any single hypothesis $\mathds H_0: \mbox{``}\nu^0\equiv\nu_0\,\mbox{''}$ against alternatives of the form $\mathds H_1: \mbox{``}\exists t \in [0,2\pi),~ \nu^0(t)\neq\nu_0(t)\,\mbox{''}$.

Furthermore, we have the following propositions. First, we check that we can apply our results to the Super-Resolution process.
    \begin{proposition} \label{l:jm:l1} 
    The process $X$ defined by \eqref{eq:X_SR} satisfies Condition~\eqref{a:Klm} and Condition~\eqref{a:nonDegeneratem} with $m=2N= 4f_c +2$.
   \end{proposition}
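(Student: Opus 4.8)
The plan is to exhibit an explicit Karhunen--Loève system for $X$, which gives \eqref{a:Klm}, and then to write down $2N$ points of $\bbT$ at which the Gaussian vector built from $X$ is non\nobreakdash-degenerate, which gives \eqref{a:nonDegeneratem}. Both conditions only constrain the covariance of $X$, so it suffices to treat the centered case $\nu^0\equiv0$, i.e. $X$ under $\mathds H_0$.

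Taking real parts in $e^{-\imath\theta}Z(t)$ in \eqref{eq:CorrZSR}--\eqref{eq:X_SR}, I would first record that, with $N=2f_c+1$,
\[
X(t,\theta)=\sum_{k=-f_c}^{f_c}\big[(\sigma\zeta_{1,k})\,f_{1,k}(t,\theta)+(\sigma\zeta_{2,k})\,f_{2,k}(t,\theta)\big],
\]
where $f_{1,k}(t,\theta):=\tfrac{1}{\sqrt N}\cos(kt-\theta)$ and $f_{2,k}(t,\theta):=-\tfrac{1}{\sqrt N}\sin(kt-\theta)$. This is a finite sum of $2N=4f_c+2$ terms whose coefficients $\sigma\zeta_{\ell,k}$ are centered with variance $\sigma^2$, so \eqref{a:Klm} reduces to two elementary verifications. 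The normalisation is immediate: $\sum_{\ell,k}|f_{\ell,k}(t,\theta)|^2=\tfrac1N\sum_{k=-f_c}^{f_c}\big(\cos^2(kt-\theta)+\sin^2(kt-\theta)\big)=1$ for every $(t,\theta)\in\bbT$. For orthogonality in $\bbL^2(\bbT)$ I would expand the relevant products by product\nobreakdash-to\nobreakdash-sum formulas and integrate in $\theta$ first: every term carrying a phase $\pm2\theta$ integrates to zero over $[0,2\pi)$, leaving $\int_0^{2\pi}\cos\!\big((k\mp l)t\big)\,dt$ for the cosine--cosine and sine--sine pairs, or $\int_0^{2\pi}\sin\!\big((k\mp l)t\big)\,dt$ for the cosine--sine pairs; the latter vanish identically, while the former vanish unless $k=l$, in which case they equal $2\pi^2/N\neq0$. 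This simultaneously shows that the $2N$ functions are non\nobreakdash-zero and pairwise orthogonal, hence \eqref{a:Klm} holds with $m=2N$.

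For \eqref{a:nonDegeneratem} I would fix pairwise distinct $t_1,\dots,t_N\in[0,2\pi)$ and set $z_i:=(t_i,0)$ and $z_{N+i}:=(t_i,\pi/2)$ for $i=1,\dots,N$, which are $2N$ pairwise distinct points of $\bbT$. Since $X(t,\theta)=\cos\theta\,A_1(t)+\sin\theta\,A_2(t)$ with $A_1=\mathrm{Re}\,Z$ and $A_2=\mathrm{Im}\,Z$ independent and identically distributed, the vector $(X(z_1),\dots,X(z_{2N}))$ equals the reordering $(A_1(t_1),\dots,A_1(t_N),A_2(t_1),\dots,A_2(t_N))$, i.e. the list of real and imaginary parts of $(Z(t_1),\dots,Z(t_N))$. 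Under $\mathds H_0$, $(Z(t_1),\dots,Z(t_N))^{\top}=\tfrac{\sigma}{\sqrt N}\,W\,\zeta$ with $\zeta=(\zeta_{-f_c},\dots,\zeta_{f_c})^{\top}$ and $W=(e^{\imath kt_i})_{1\le i\le N,\,-f_c\le k\le f_c}$; multiplying row $i$ of $W$ by the unimodular constant $e^{\imath f_c t_i}$ turns $W$ into the Vandermonde matrix $\big((e^{\imath t_i})^{\,j}\big)_{1\le i\le N,\,0\le j\le N-1}$, which is invertible because the nodes $e^{\imath t_1},\dots,e^{\imath t_N}$ are pairwise distinct. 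Hence $(Z(t_1),\dots,Z(t_N))$, viewed over $\bbR$ through its $2N$ real and imaginary coordinates, is a real\nobreakdash-linear bijective image of the standard Gaussian vector $(\mathrm{Re}\,\zeta_k,\mathrm{Im}\,\zeta_k)_{|k|\le f_c}\in\bbR^{2N}$, hence non\nobreakdash-degenerate (equivalently, its covariance is $\operatorname{diag}(\Sigma,\Sigma)$ with $\Sigma=\sigma^2(\Gamma(t_i-t_j))_{ij}$, $\Gamma=\mathbf D_N/N$, and $\Sigma$ is positive definite). This gives \eqref{a:nonDegeneratem} with $m=2N=4f_c+2$.

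The argument is essentially a computation; the two spots deserving care are the $\theta$\nobreakdash-integration bookkeeping that kills all cross terms in the $\bbL^2(\bbT)$ inner products, and the Vandermonde/discrete\nobreakdash-Fourier invertibility of $W$ --- the latter being the same kind of rank argument already used in the proof of Proposition~\ref{prop:am}. Neither presents a genuine obstacle.
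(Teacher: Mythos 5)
Your proof is correct for the statement as written, but it takes a genuinely different route from the paper's. You verify \eqref{a:Klm} and \eqref{a:nonDegeneratem} directly on the real process $X$: an explicit orthogonal system $\tfrac{1}{\sqrt N}\cos(kt-\theta)$, $-\tfrac{1}{\sqrt N}\sin(kt-\theta)$ (your $\theta$-integration bookkeeping and the normalisation $\sum_i|f_i|^2=1$ are right), together with the $2N$ points $(t_i,0)$, $(t_i,\pi/2)$ and the invertibility of the Vandermonde-type matrix $(e^{\imath k t_i})$. The paper instead works at the level of the complex process $Z$: its proof establishes \eqref{a:KLN} from the expansion $Z(t)=\tfrac{\sigma}{\sqrt N}\sum_k\zeta_k e^{\imath k t}$ and \emph{both} non-degeneracy requirements of \eqref{a:nonDegeneratefiniteComplex} --- the one you prove via Vandermonde, and in addition the non-degeneracy of $(Z(t_1),Z'(t_1),Z(t_3),\ldots,Z(t_N))$, obtained by replacing $Z'(t_1)$ with the divided difference $(Z(t_1+h)-Z(t_1))/h$ and showing the corresponding determinant tends to a non-zero limit as $h\to0$ (the factor $\sin^2(h/2)/h^2\to 1/4$); the conditions on $X$ claimed in the statement then follow by the transfer construction already used in the proof of Proposition~\ref{prop:am}. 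What the paper's route buys is precisely that extra derivative condition: it is the hypothesis invoked, through \eqref{a:nonDegeneratefiniteComplex}, by Proposition~\ref{prop:am}$(b)$, Theorem~\ref{thm:rice_unknown_variance} and Proposition~\ref{cor:rice_unknown_variance_blasso} for the studentized (unknown-variance) statistics; your argument, while fully sufficient for the literal claim, does not yield it, so if the proposition is to serve its downstream role in Section~\ref{sec:SR} the non-degeneracy of $(Z(t_1),Z'(t_1),Z(t_3),\ldots,Z(t_N))$ would still have to be checked separately. What your route buys is a more elementary and self-contained verification that addresses the real process $X$ named in the statement directly, with an explicit Karhunen--Lo\`eve basis rather than an appeal to the complex expansion plus transfer.
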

  
   
   \noindent
 Then, we derive a first result when the noise level $\sigma$ si known.  
   \noindent
\begin{proposition}
 \label{cor:rice_known_variance_blasso}
Under the null $\mathds{H}_0$, the test statistic
  $$
 S_{\mathrm{SR}}^{\mathrm{Rice}} =\frac{\sigma (\alpha_1 \lambda_1 + \alpha_2) \phi(\lambda_1/\sigma) + (\alpha_1 \sigma^2- \alpha_3^2) \overline{\Phi}(\lambda_1/\sigma)  }{\sigma (\alpha_1 \lambda_2 + \alpha_2) \phi(\lambda_2/\sigma) + (\alpha_1 \sigma^2- \alpha_3^2) \overline{\Phi}(\lambda_2/\sigma)} \sim \mathcal U([0,1]),
  $$
  where  ${\Phi}$ is the standard Gaussian cumulative distribution function , $ \overline{\Phi} = 1-\Phi$ its survival function, $\phi$ its density function, $(\lambda_1,\lambda_2)$ is defined by $($\eqref{def:hatz}, \eqref{eq:second_knot}$)$ and
  $$
  \left\{ \begin{array}{l}
  \alpha_1 = \frac{1}{3} f_c (f_c + 1), \\
  \alpha_2 = \frac{1}{\sqrt{N}} \sum\limits_{k=-f_c}^{f_c} (k^2 - \alpha_1) \times \mathrm{Re}(y_{k} e^{\imath (k \hat{t} - \hat{\theta})}),  \\
  \alpha_3 = \frac{1}{\sqrt{N}} \sum\limits_{k=-f_c}^{f_c} k \times \mathrm{Re}(y_{k} e^{\imath (k \hat{t} - \hat{\theta})}).
  \end{array} \right.
  $$
\end{proposition}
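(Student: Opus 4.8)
The plan is to obtain Proposition~\ref{cor:rice_known_variance_blasso} as an explicit specialization of Theorem~\ref{thm:rice_known_variance} to the Super-Resolution process $X$ of \eqref{eq:X_SR}. First I would record that, under $\mathds H_0$, $X$ meets the hypotheses of that theorem: \eqref{e:Normalization} holds since $\Gamma=\mathbf D_N/N$ satisfies $|\Gamma(t)|<1$ on $(0,2\pi)$, and \eqref{e:NonDegenerated} follows from the sufficient condition \cite[Proposition~6.5]{Azais_Wschebor_09} because, for the non‑trivial trigonometric‑polynomial process $X$ (with $N=2f_c+1\geq 3$ frequencies), the joint law of the three entries of $X''(z)$ is non‑degenerate — a consequence of the non‑degeneracy built into \eqref{a:nonDegeneratem} (see Proposition~\ref{l:jm:l1}). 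Hence $S^{\mathrm{Rice}}=\overline G_{R(\widehat z)}(\lambda_1)/\overline G_{R(\widehat z)}(\lambda_2)\sim\mathcal U([0,1])$ under $\mathds H_0$, and the remaining work is to compute $\widetilde\Lambda$ and the residual Hessian $R(\widehat z)$ explicitly and then carry out the integral defining $\overline G_{R(\widehat z)}$.

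Next I would compute $\widetilde\Lambda=-\rho''(0)$. Since $\rho(t,\theta)=\Gamma(t)\cos\theta$ with $\Gamma(t)=\tfrac1N\sum_{|k|\le f_c}e^{\imath kt}$, differentiating twice at the origin gives $\rho''(0)=\diag(\Gamma''(0),-1)$, whence $\widetilde\Lambda=\diag(-\Gamma''(0),1)$ with $-\Gamma''(0)=\tfrac1N\sum_{|k|\le f_c}k^2=\tfrac{f_c(f_c+1)(2f_c+1)}{3N}=\tfrac13 f_c(f_c+1)=\alpha_1$ (using $N=2f_c+1$). Then I would evaluate $R(\widehat z)=X''(\widehat z)+\widetilde\Lambda\,\lambda_1$ from $X(t,\theta)=\tfrac1{\sqrt N}\sum_k\mathrm{Re}\big(y_k e^{\imath(kt-\theta)}\big)$. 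Writing $c_k:=\mathrm{Re}(y_k e^{\imath(k\widehat t-\widehat\theta)})$ and $s_k:=\mathrm{Im}(y_k e^{\imath(k\widehat t-\widehat\theta)})$, the optimality conditions $X'(\widehat z)=0$ read $\sum_k s_k=\sum_k k s_k=0$, and the phase condition $Z(\widehat t)=\lambda_1 e^{\imath\widehat\theta}$ gives $\tfrac1{\sqrt N}\sum_k c_k=\lambda_1$; differentiating twice yields $\partial_{tt}X(\widehat z)=-\tfrac1{\sqrt N}\sum_k k^2c_k$, $\partial_{t\theta}X(\widehat z)=\tfrac1{\sqrt N}\sum_k k c_k=\alpha_3$ and $\partial_{\theta\theta}X(\widehat z)=-\tfrac1{\sqrt N}\sum_k c_k=-\lambda_1$. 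Consequently
\[
R(\widehat z)=\begin{pmatrix}\alpha_1\lambda_1-\tfrac1{\sqrt N}\sum_k k^2c_k & \alpha_3\\ \alpha_3 & 0\end{pmatrix}=\begin{pmatrix}-\alpha_2 & \alpha_3\\ \alpha_3 & 0\end{pmatrix},
\]
where the last equality uses $\alpha_2=\tfrac1{\sqrt N}\sum_k(k^2-\alpha_1)c_k=\tfrac1{\sqrt N}\sum_k k^2c_k-\alpha_1\lambda_1$. This step — in particular the observation that the $(\theta,\theta)$‑entry of the residual Hessian is \emph{exactly} zero, because $(\widetilde\Lambda)_{22}=1$ cancels $-\partial_{\theta\theta}X(\widehat z)/X(\widehat z)=1$ — is the only non‑routine part of the argument, and it is what makes $\det R(\widehat z)=-\alpha_3^2$ so that the closed form collapses to a three‑term combination.

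Finally I would evaluate $\overline G_{R(\widehat z)}$. With $r=R(\widehat z)$,
\[
\det(-\widetilde\Lambda u+r)=\det\begin{pmatrix}-\alpha_2-\alpha_1 u & \alpha_3\\ \alpha_3 & -u\end{pmatrix}=\alpha_1u^2+\alpha_2u-\alpha_3^2,
\]
and integrating this quadratic against $\phi(\cdot/\sigma)$ over $[\ell,+\infty)$ via the elementary identities $\int_\ell^\infty\phi(u/\sigma)\,\mathrm du=\sigma\overline\Phi(\ell/\sigma)$, $\int_\ell^\infty u\phi(u/\sigma)\,\mathrm du=\sigma^2\phi(\ell/\sigma)$ and $\int_\ell^\infty u^2\phi(u/\sigma)\,\mathrm du=\sigma^2\ell\phi(\ell/\sigma)+\sigma^3\overline\Phi(\ell/\sigma)$ gives
\[
\overline G_{R(\widehat z)}(\ell)=\sigma\Big[\sigma(\alpha_1\ell+\alpha_2)\phi(\ell/\sigma)+(\alpha_1\sigma^2-\alpha_3^2)\overline\Phi(\ell/\sigma)\Big].
\]
Forming the ratio at $\ell=\lambda_1$ and $\ell=\lambda_2$, the prefactor $\sigma$ cancels and one recovers exactly the stated expression for $S^{\mathrm{Rice}}_{\mathrm{SR}}$; by Theorem~\ref{thm:rice_known_variance} it is $\mathcal U([0,1])$‑distributed under $\mathds H_0$. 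Apart from the evaluation of $R(\widehat z)$, everything reduces to bookkeeping with the $1/\sqrt N$ normalizations and standard truncated Gaussian moments.
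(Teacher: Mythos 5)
Your proposal is correct and follows essentially the same route as the paper: verify the hypotheses via Proposition~\ref{l:jm:l1}, compute $\widetilde\Lambda=\diag(\alpha_1,1)$ and $R(\widehat z)=\left(\begin{smallmatrix}-\alpha_2&\alpha_3\\ \alpha_3&0\end{smallmatrix}\right)$, evaluate $\overline G_{R(\widehat z)}$ with the truncated Gaussian moment identities, and then invoke Theorem~\ref{thm:rice_known_variance}. The only difference is presentational: you derive the entries of $R(\widehat z)$ explicitly from the first-order conditions at $\widehat z$, whereas the paper simply records the resulting matrix decomposition and the integral identities.
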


Finally, we have the following result when the noise level $\sigma$ is unknown.
\begin{proposition}
\label{cor:rice_unknown_variance_blasso}
Under the null $\mathds{H}_0$, the test statistic  
  $$
 T_{\mathrm{SR}}^{\mathrm{Rice}} =\frac{\alpha_1 \overline{F}_{m-3}(T_1) + (\alpha_1 T_1 + \alpha_2) f_{m-3}(T_1) - \gamma_m^{-1} \alpha_3^2 \overline{F}_{m-1}(T_1)  }{\alpha_1 \overline{F}_{m-3}(T_2) + (\alpha_1 T_2 + \alpha_2) f_{m-3}(T_2) - \gamma_m^{-1} \alpha_3^2 \overline{F}_{m-1}(T_2)} \sim \mathcal U([0,1]),
  $$
where  $F_{d}$ is the Student cumulative distribution function with $d$ degrees of freedom, $ \overline  F _{d}  = 1-F_{d} $ its survival function, $f_d$ its density function, $T_1 = \lambda_1/\widehat{\sigma}_{|}$, $T_2 = \lambda_2/\widehat{\sigma}_{|}$, $\widehat{\sigma}_{|}$ is defined by \eqref{eq:sighatz2} and $ \gamma_m = \frac{m-3}{m-2} \frac{\bar{\Gamma}\left(\frac{m}{2}\right)\bar{\Gamma}\left(\frac{m-3}{2}\right)}{\bar{\Gamma}\left(\frac{m-1}{2}\right)\bar{\Gamma}\left(\frac{m-2}{2}\right)} $.

\end{proposition}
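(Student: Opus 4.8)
The plan is to deduce Proposition~\ref{cor:rice_unknown_variance_blasso} from Theorem~\ref{thm:rice_unknown_variance} by computing the tail function $\overline{H}_r$ of that theorem in closed form for the Super-Resolution process. First I would verify the hypotheses: Proposition~\ref{l:jm:l1} gives \eqref{a:Klm} and \eqref{a:nonDegeneratem} for $X$ with $m = 2N = 4f_c+2$, and \eqref{eq:CorrZSR} exhibits \eqref{a:KLN} and \eqref{a:nonDegeneratefiniteComplex} for $Z$ with this $N$; so Theorem~\ref{thm:rice_unknown_variance} applies and $T^{\mathrm{Rice}} = \overline{H}_r(T_1)/\overline{H}_r(T_2) \sim \mathcal{U}([0,1])$ with $r = R(\widehat z)/\widehat\sigma_{|}$, where $\overline{H}_r(\ell) = \int_\ell^{+\infty} \det(-\widetilde\Lambda\,t + r)\, f_{m-1}\big(t\sqrt{(m-1)/(m-3)}\big)\,dt$. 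It then remains to make $\widetilde\Lambda = -\rho''(0)$ and $R(\widehat z) = X''(\widehat z) + \widetilde\Lambda X(\widehat z)$ explicit and to evaluate $\overline{H}_r$; the overall positive constant cancels in the ratio.

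For the two matrices: from $\rho(t,\theta) = \Gamma(t)\cos\theta$ with $\Gamma(t) = \mathbf{D}_N(t)/N = N^{-1}\sum_{|k|\le f_c} e^{\imath kt}$, differentiating at the origin gives $\rho''(0) = \mathrm{diag}(\Gamma''(0),-1)$, hence $\widetilde\Lambda = \mathrm{diag}(-\Gamma''(0),1) = \mathrm{diag}(\alpha_1,1)$ because $-\Gamma''(0) = N^{-1}\sum_{|k|\le f_c} k^2 = \tfrac{1}{3}f_c(f_c+1) = \alpha_1$. For $R(\widehat z)$, I would write (under $\mathds{H}_0$) $X(t,\theta) = N^{-1/2}\sum_{|k|\le f_c}\mathrm{Re}\big(y_k e^{\imath(kt-\theta)}\big)$, whence $\partial_\theta^2 X = -X$, $\partial_t^2 X(t,\theta) = -N^{-1/2}\sum k^2\,\mathrm{Re}(y_k e^{\imath(kt-\theta)})$ and $\partial_t\partial_\theta X(t,\theta) = N^{-1/2}\sum k\,\mathrm{Re}(y_k e^{\imath(kt-\theta)})$. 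Evaluating at $\widehat z = (\widehat t,\widehat\theta)$, using $X(\widehat z) = \lambda_1$ and the definitions of $\alpha_2$, $\alpha_3$, the $(2,2)$ entry of $R(\widehat z)$ vanishes and
\[
R(\widehat z) = \begin{pmatrix} -\alpha_2 & \alpha_3 \\ \alpha_3 & 0 \end{pmatrix}, \qquad \det\big(-\widetilde\Lambda\,u + R(\widehat z)\big) = \alpha_1 u^2 + \alpha_2 u - \alpha_3^2 .
\]
(The same determinant fed into $\overline{G}_r$ of Theorem~\ref{thm:rice_known_variance}, together with $\int_\ell^{\infty}\phi(u/\sigma)\,du = \sigma\overline{\Phi}(\ell/\sigma)$, $\int_\ell^{\infty} u\,\phi(u/\sigma)\,du = \sigma^2\phi(\ell/\sigma)$ and $\int_\ell^{\infty} u^2\phi(u/\sigma)\,du = \sigma^2\ell\,\phi(\ell/\sigma) + \sigma^3\overline{\Phi}(\ell/\sigma)$, yields $\overline{G}_{R(\widehat z)}(\ell) = \sigma\big[\sigma(\alpha_1\ell + \alpha_2)\phi(\ell/\sigma) + (\alpha_1\sigma^2 - \alpha_3^2)\overline{\Phi}(\ell/\sigma)\big]$, i.e.\ Proposition~\ref{cor:rice_known_variance_blasso}, as a by-product of the same computation.)

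For the studentized statistic, $\det(-\widetilde\Lambda\,t + r) = \alpha_1 t^2 + (\alpha_2/\widehat\sigma_{|})t - (\alpha_3/\widehat\sigma_{|})^2$, so $\overline{H}_r(\ell) = \alpha_1 I_2(\ell) + (\alpha_2/\widehat\sigma_{|}) I_1(\ell) - (\alpha_3/\widehat\sigma_{|})^2 I_0(\ell)$ with $I_j(\ell) := \int_\ell^{\infty} t^j f_{m-1}\big(t\sqrt{(m-1)/(m-3)}\big)\,dt$. I would compute these by the substitution $v = t\sqrt{(m-1)/(m-3)}$ and the antiderivative $\tfrac{d}{dv}\big(1 + v^2/(m-1)\big)^{-(m-2)/2} \propto -v\, f_{m-1}(v)$: this gives $I_1$ immediately and, after one integration by parts, $I_2$, while the leftover $\int\big(1 + v^2/(m-1)\big)^{-(m-2)/2}\,dv$ turns, under a second rescaling $v \mapsto v\sqrt{(m-1)/(m-3)}$, into a Student survival function with $m-3$ degrees of freedom up to an explicit ratio of $\bar\Gamma$-values, and $I_0$ produces $\overline{F}_{m-1}$. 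Collecting the three contributions,
\[
\overline{H}_r(\ell) = \mathtt{(cst)}\big[\alpha_1\overline{F}_{m-3}(\ell) + (\alpha_1\ell + \alpha_2/\widehat\sigma_{|})\,f_{m-3}(\ell) - \gamma_m^{-1}(\alpha_3/\widehat\sigma_{|})^2\,\overline{F}_{m-1}(\ell)\big],
\]
where $\gamma_m = \tfrac{m-3}{m-2}\tfrac{\bar\Gamma(m/2)\bar\Gamma((m-3)/2)}{\bar\Gamma((m-1)/2)\bar\Gamma((m-2)/2)}$ is exactly the value of $\sqrt{(m-1)/(m-3)}\cdot\tfrac{m-3}{m-2}\cdot K_{m-1}/K_{m-3}$ (with $K_d$ the normalizing constant of $f_d$) that emerges when the $I_0$-term is renormalized against the $I_1$- and $I_2$-terms. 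Plugging $\ell = T_1$ and $\ell = T_2$, cancelling $\mathtt{(cst)}$, and invoking Theorem~\ref{thm:rice_unknown_variance} then gives the announced identity for $T^{\mathrm{Rice}}_{\mathrm{SR}} = \overline{H}_r(T_1)/\overline{H}_r(T_2)$.

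The two matrix computations are routine. The main obstacle is the last step: keeping careful track of the two rescalings of the integration variable and of the normalizing constants of the Student densities with $m-1$ and $m-3$ degrees of freedom, so that the three integrals $I_0, I_1, I_2$ recombine into the printed closed form with the single factor $\gamma_m$; in particular, reconciling the exact argument carried by the $\overline{F}_{m-1}$-term (which in the raw computation comes with the factor $\sqrt{(m-1)/(m-3)}$) against the stated expression is precisely the bookkeeping point to settle.
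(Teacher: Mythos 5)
Your proposal is correct and follows essentially the same route as the paper's own proof in Appendix~A.4: verify the hypotheses via Proposition~\ref{l:jm:l1}, compute $\widetilde\Lambda=\mathrm{diag}(\alpha_1,1)$ and $R(\widehat z)=\bigl(\begin{smallmatrix}-\alpha_2&\alpha_3\\ \alpha_3&0\end{smallmatrix}\bigr)$ so that $\det(-\widetilde\Lambda u+R(\widehat z))=\alpha_1u^2+\alpha_2u-\alpha_3^2$, evaluate the Gaussian and Student tail integrals in closed form (your $I_0,I_1,I_2$ agree with the integrals the paper simply lists, including the emergence of $\gamma_m$ as the ratio of normalizing constants), and conclude by Theorems~\ref{thm:rice_known_variance} and~\ref{thm:rice_unknown_variance}, with Proposition~\ref{cor:rice_known_variance_blasso} obtained as the same by-product. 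The ``bookkeeping point'' you flag is not a gap in your argument but a looseness of the printed statement itself: the exact conditioning in Theorem~\ref{thm:rice_unknown_variance} is on $R(\widehat z)/\widehat\sigma_{|}$, which produces $\alpha_2/\widehat\sigma_{|}$, $(\alpha_3/\widehat\sigma_{|})^2$ and the argument $T_i\sqrt{(m-1)/(m-3)}$ inside $\overline F_{m-1}$ (exactly as in the paper's own listed integral), so your derivation is at least as complete as the paper's.
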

 
 \noindent   
 A proof of these propositions can be found in Appendix~\ref{proof:PropSR}.

\subsection{A numerical study}
\label{sec:num}

A Python code (and a Jupyter notebook) illustrating the following numerical experiments can be found at: \textit{https://github.com/ydecastro/super-resolution-testing}.

\subsubsection{Computation of $\lambda_2$} \label{s:l2}
To build our test statistic $S^{\mathrm{Rice}}$ in the Super-Resolution context (namely $S_{\mathrm{SR}}^{\mathrm{Rice}}$), we need to compute three quantities. The first one is $\lambda_1$, the maximum of~$X(\cdot)$ over the torus $\bbT$. Its simple form allow us to use classical optimization routines, for instance \textbf{scipy.optimize.minimize} on \textbf{Python}, \textbf{fminsearch} on \textbf{MATLAB} or \textbf{optim} on~\textbf{R} both combined with global resolution options on $\bbT$. The second one is $R = R(\hat{z})$ which appears in the test statistic through the coefficients $\alpha_1,\alpha_2$ and $\alpha_3$ that are simple functions of the observation~$y$ and $\hat z$. 
Finally, the third one is
$$
\lambda_2 = \lambda_2^{\hat{z}} = \lambda_1 + \max _{y \in \bbT} \left\{\frac{X(y) - X(\hat{z})}{1 - \rho(\hat{z}-y)} \right\}.
$$
Contrary to $\lambda_1$, there is some indetermination problem when $y$ is close to $\hat{z}$. In particular, the approximation of $\hat{z}$ is by definition not exact and the radial limits of $X^{|}$ are not numerically achieved. A way to get around that is the integral form of the remainder in Taylor's theorem. In full generality, we compute
$$
\lambda_2^{\hat{z}} - \lambda_1 = \max_{y \in \bbT} \left\{\frac{\displaystyle\int_0^1 (1-h) ~ (y-\hat{z})^T  X''(\hat{z} + h(y-\hat{z})) (y-\hat{z}) ~ \mathrm{d}h}{\displaystyle\int_0^1 (1-h) ~ (y-\hat{z})^T  \rho''(\hat{z} + h(y-\hat{z})) (y-\hat{z}) ~ \mathrm{d}h} \right\}\,.
$$
Denote by $r = ||y - \hat{z} ||_2$ the distance between $y$ and $ \hat{z}$. The numerical indetermination occurs for small values of $r$. But remark that one can factorize $r^2$ in both the numerator and the denominator. This leads to the expression
$$
\lambda_2^{\hat{z}} - \lambda_1 = \max_{y \in \bbT} \left\{\frac{\displaystyle\int_0^1 (1-h) ~ (\frac{y-\hat{z}}r)^T  X''(\hat{z} + h(y-\hat{z})) (\frac{y-\hat{z}}r) ~ \mathrm{d}h}{\displaystyle\int_0^1 (1-h) ~ (\frac{y-\hat{z}}r)^T  \rho''(\hat{z} + h(y-\hat{z})) (\frac{y-\hat{z}}r) ~ \mathrm{d}h} \right\}\,,
$$
which is more robust in practice. In the Super-Resolution case, elementary trigonometry identities give the following simpler form of the denominator
$$
 \sum\limits_{k=-f_c}^{f_c} \left( k \cos(\alpha) - \sin(\alpha) \right)^2 \times \mathrm{sinc}\left( \frac{r(k \cos(\alpha) - \sin(\alpha))}{2} \right)^2
$$
where $y-\hat{z} = (r \cos(\alpha), r \sin(\alpha))$ and \textbf{sinc} denote the cardinal sine function, i.e.
$$
\mathrm{sinc}(x) = \left\{  \begin{array}{cl}
\displaystyle\frac{\sin x}{x}&\text{if }x \neq 0, \\
&\\
1 &\text{if } x=0,
\end{array} \right.
$$ 
which is a numerically robust function. We conclude the optimization using the same routine as the one of $\lambda_1$.

\subsubsection{Monte-Carlo experiment}

In this section we compare  the cumulative distribution  of several statistics of test in the case where the variance is known, namely
\begin{itemize}
\item The statistics of the Rice test $ S^{\mathrm{Rice}}  $, given by Theorem \ref{thm:rice_known_variance}, are displayed in blue.
\item The statistics of the Spacing test $ S^{\mathrm{ST}} $, given   by \eqref{e:naive}, are displayed  in  green.
\item The statistics  of the Spacing test on  grids $G_n$  given by $ \overline \Phi(\lambda_{1,n}) / \overline\Phi(\lambda_{2,n}) $ are displayed with a color  that take the respective values green, red, purple and cyan for sizes equal to $ 3^2,10^2,32^2,50^2$. 
 \item The grid test, based on $S^{\mathrm{Grid}}$ of Theorem \ref{thm:grid_known_variance}
  can be viewed as the limit of the  discrete grid tests above  as the size  growths to infinity. As one can see in the figures, there is some evidence  that this limit is numerically reached for a size $n=50^2$. 
 \end{itemize}
We complete each graph by the diagonal to the cumulative distribution function of the uniform law on $[0,1]$ displayed in black. All the figures are based on $2000$ simulations of the corresponding statistics.
  
 The first figure studies the distribution of $S^{Rice}$ and $S^{ST}$ under the  Null. This figure is displayed in the introduction (see Figure~\ref{unbiaised_naive}). The second figure deals with the grid statistic and $S^{Rice}$ under various alternatives defined by a single spike and compares the power of the Rice test with  the discrete grid tests, see Figure~\ref{Compa_1mean}. Finally, the third figure  performs  the same study but with an alternative defined by two atoms, see Figure~\ref{Compa_2mean}.

\begin{figure}[!h]
\includegraphics[width=4.5cm,height=4.5cm]{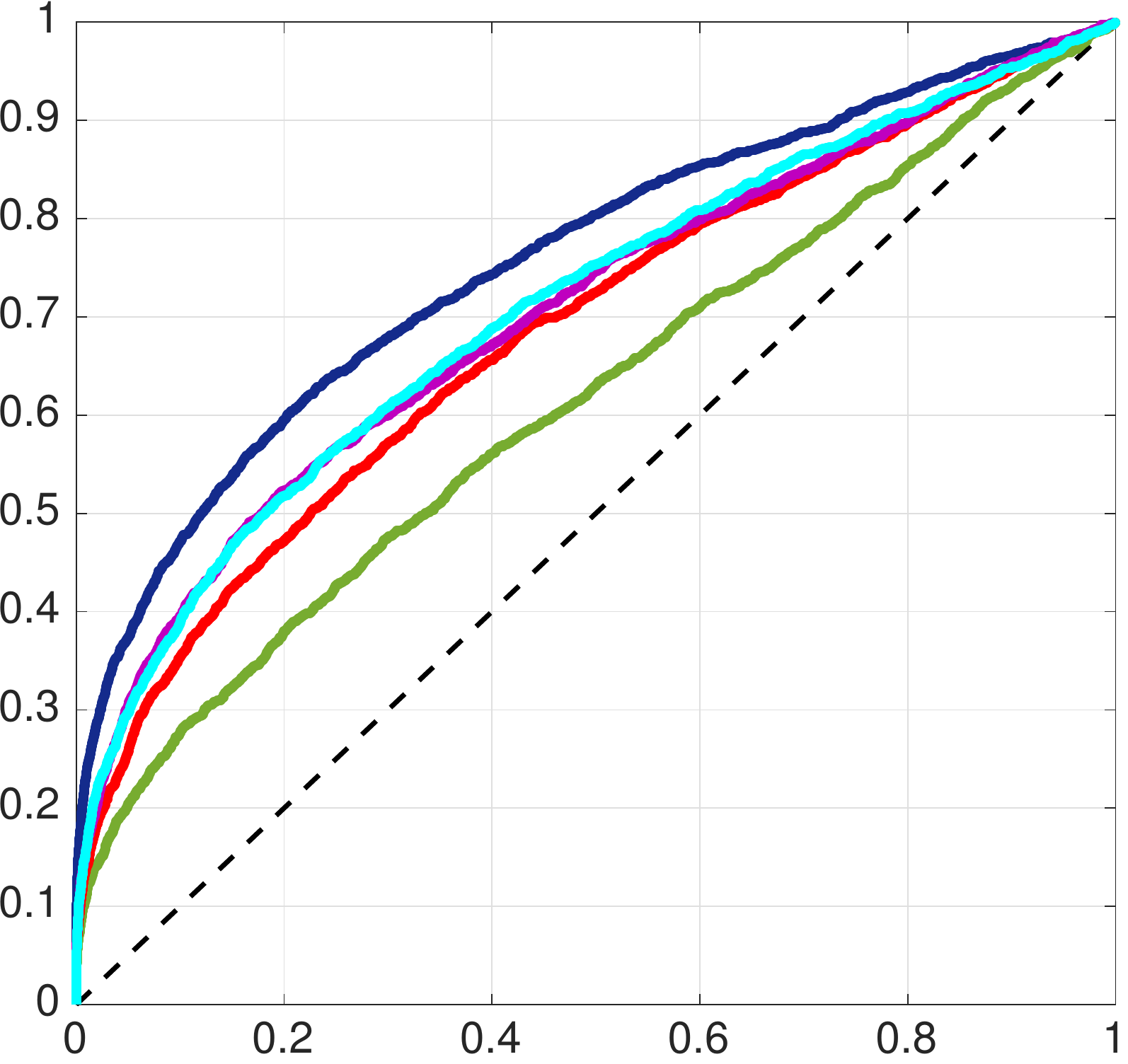}
\ \ \ \ \ 
\includegraphics[width=4.5cm,height=4.5cm]{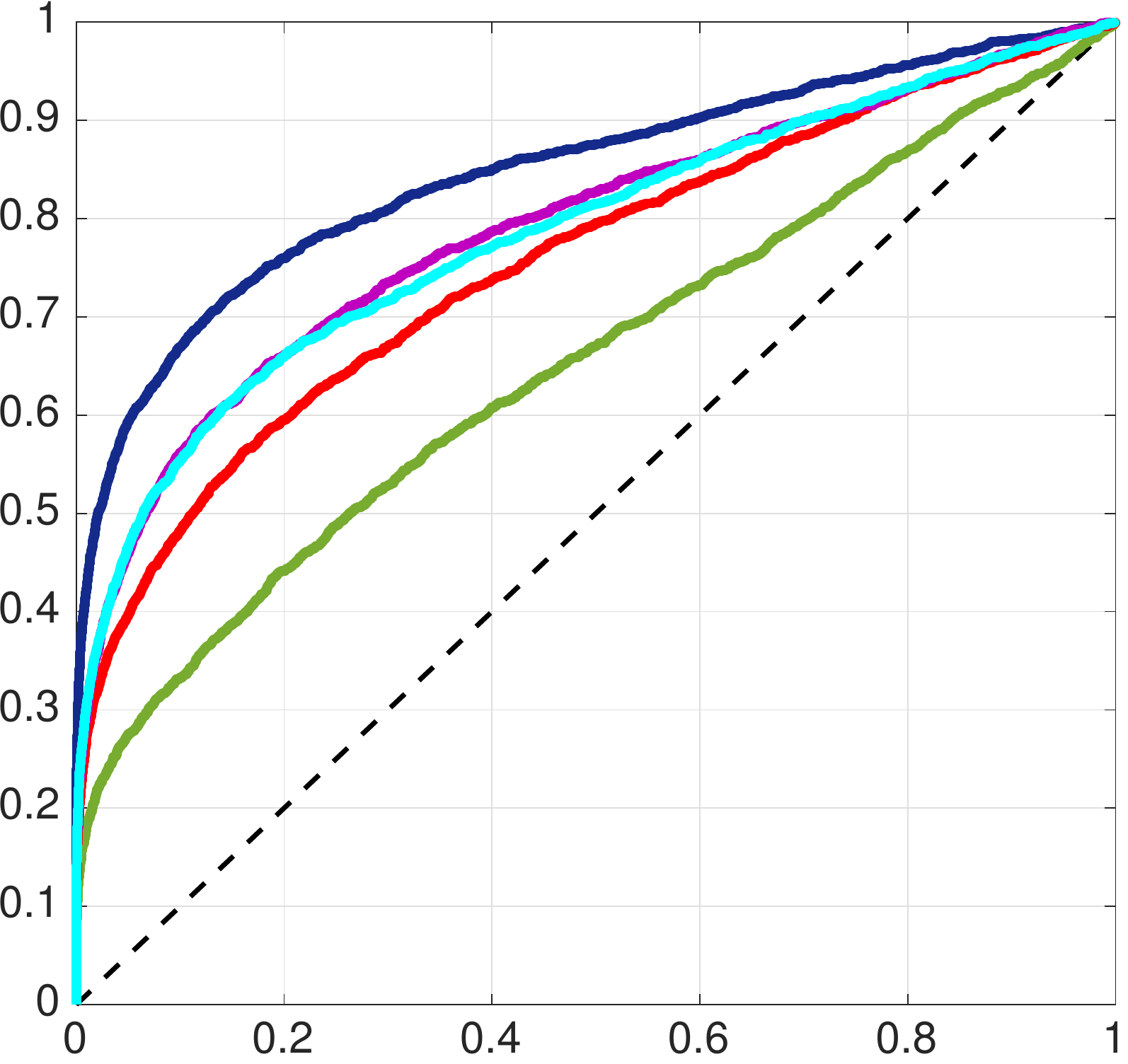}
\ \ \ \ \ 
\includegraphics[width=4.5cm,height=4.5cm]{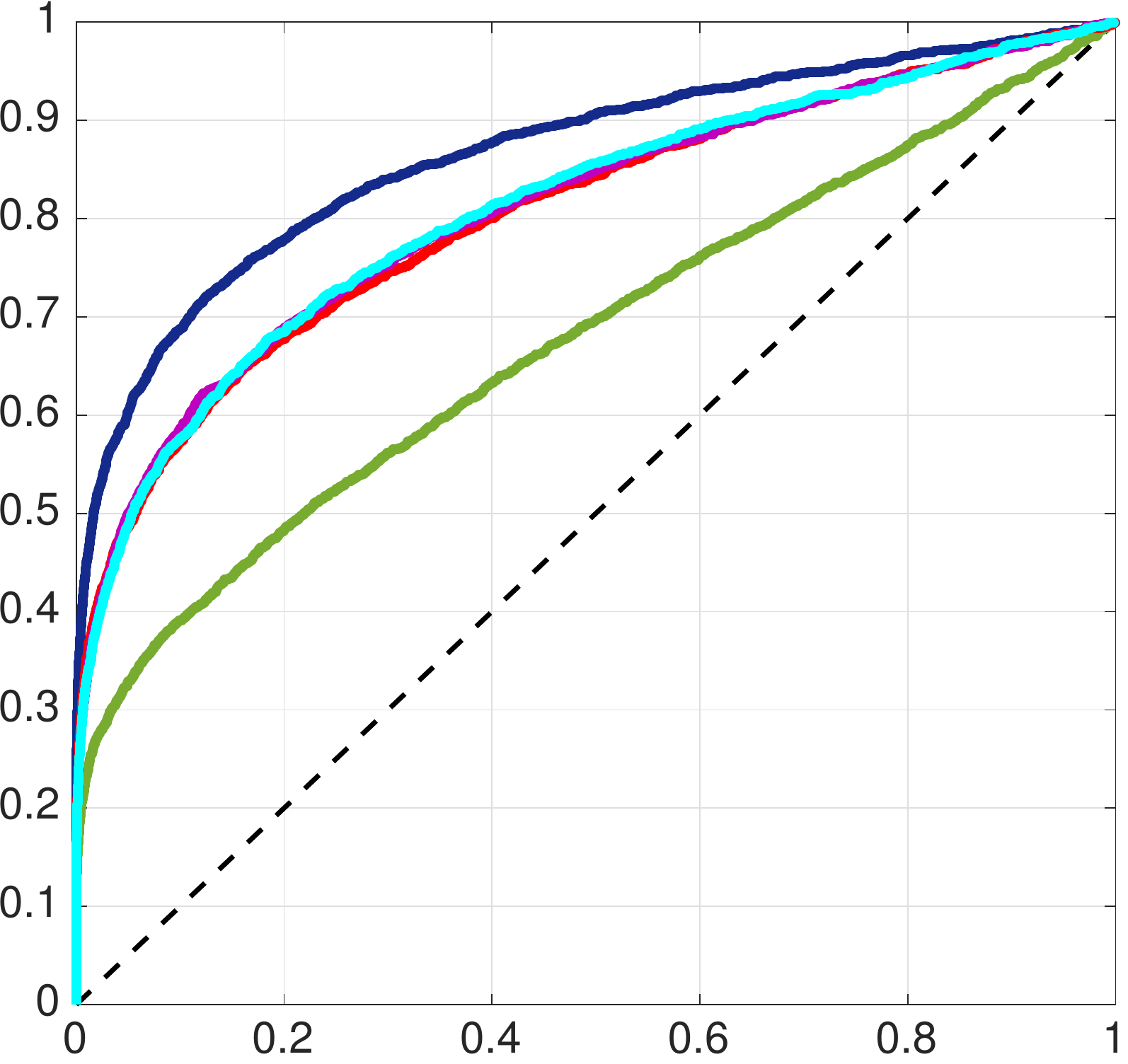}
\caption{ Same as Figure 2 except that  (a) $f_c=7$ , (b) the alternative is defined by two spikes at random locations (with a constraint of separation)  (c) the weights are now  from left to right $ (\log N,\log N)$; $ (\log N,\sqrt N)$; $ (\sqrt N ,\sqrt N)$.}
\label{Compa_2mean}
\end{figure}

A last set of experiments is devoted to the computation of the testing procedure when the noise level is unknown, see Figure~\ref{fig:student}. 

\begin{figure}[!h]
\includegraphics[width=0.48\textwidth]{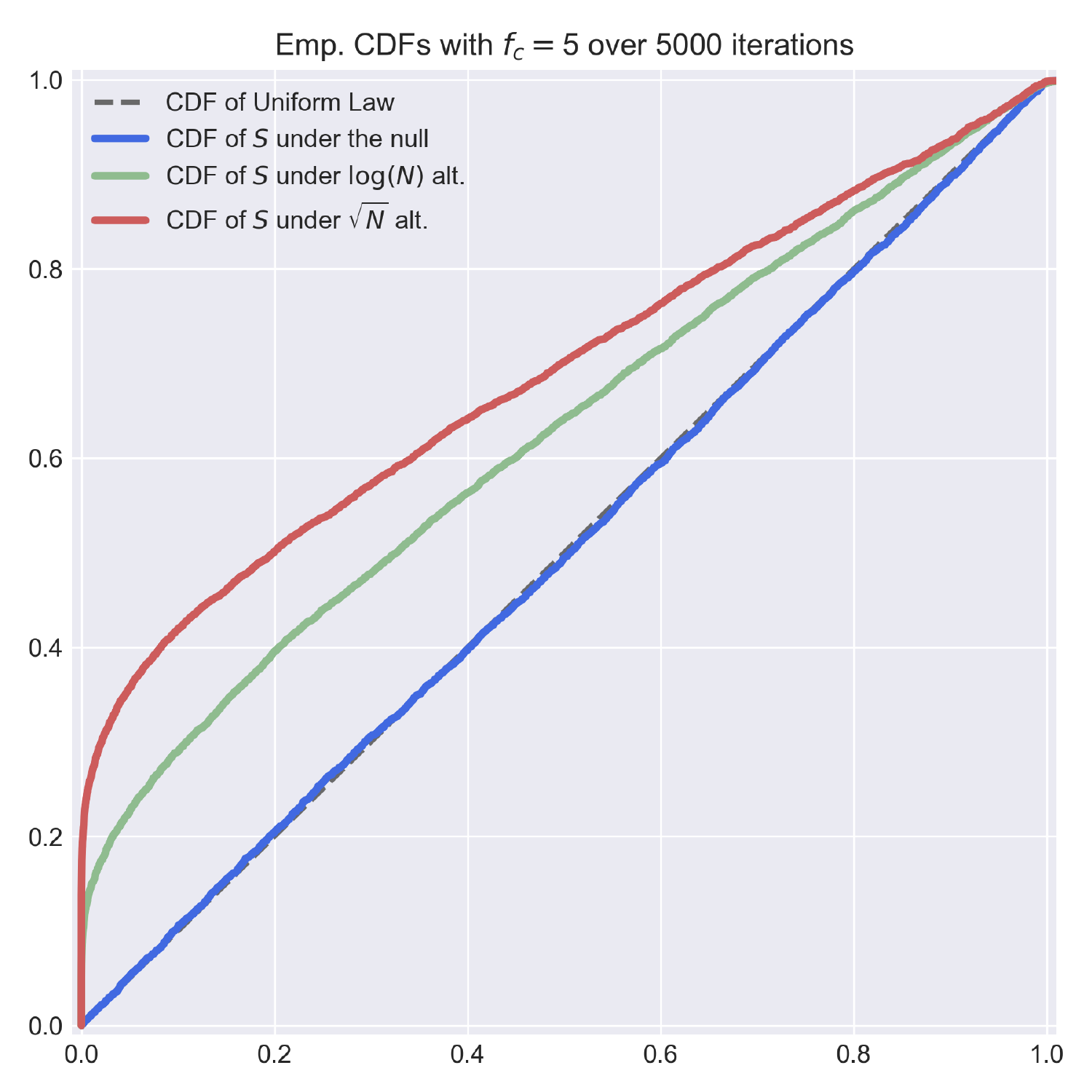}
\ \ \ \ \ 
\includegraphics[width=0.48\textwidth]{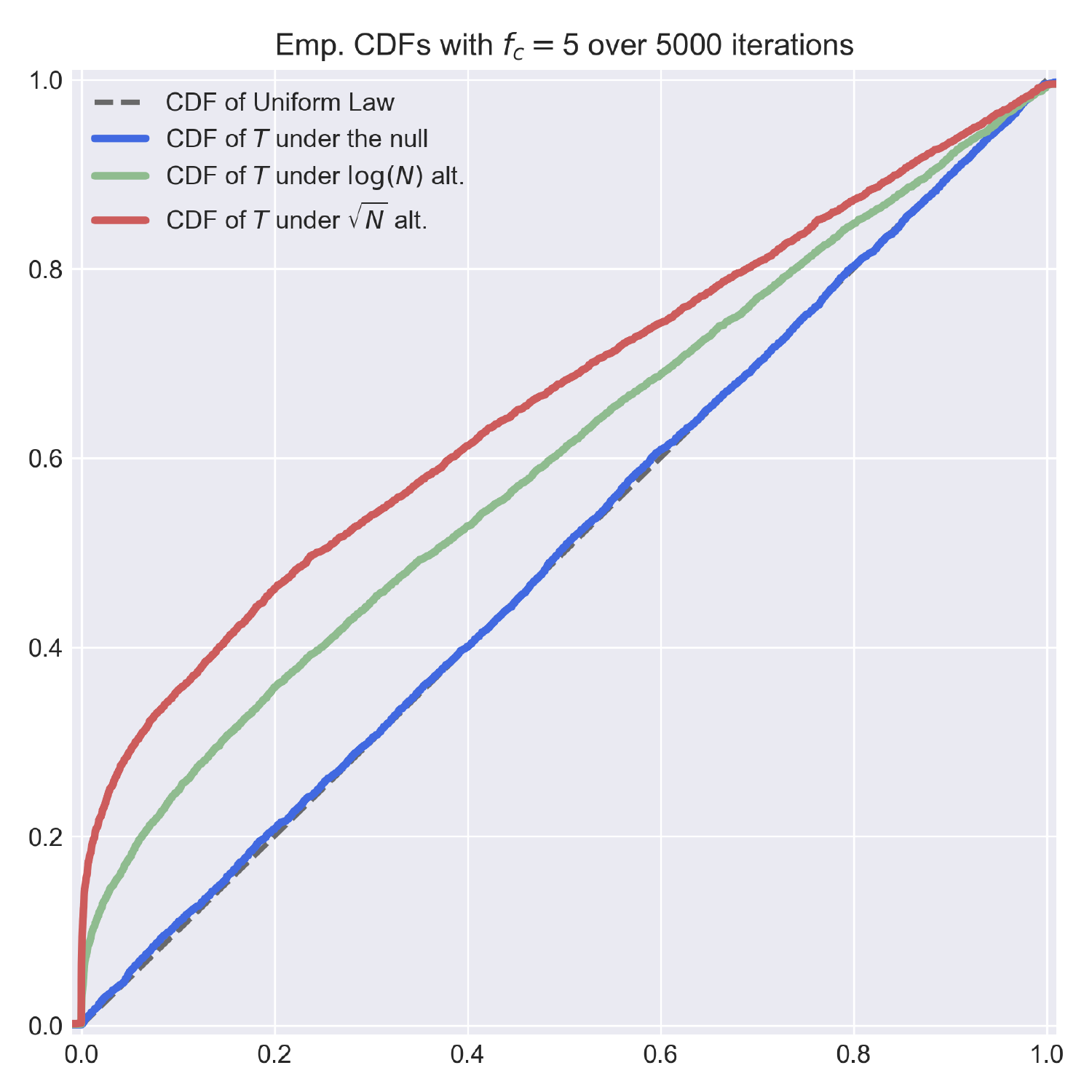}
\caption{ We compute three empirical cumulative distribution functions: under the null (dashed gray line), under one spike alternative of size $\log N$ (green line) and under one spike alternative of size $\sqrt N$. The left panel uses the statistic $S$ (known variance case) and the right panel the statistic $T$ (unknown variance case). We witness a slight loss of power in this later case.}
\label{fig:student}
\end{figure}

These latter numerical experiments were conducted using a Python code. The notebook \textbf{testing\_super\_resolution.ipynb} available at \textit{github.com/ydecastro/super-resolution-testing} allows to reproduce these experiments.
 \subsection*{Discussion}
 
  Figure \ref{unbiaised_naive}  suggests  that the Spacing test  is highly non-conservative  which is a major  drawback. For instance, when $f_c=7$, the empirical level  of the Spacing test  at a nominal  level  of 5\% is in fact 11,3\%, showing that this  test is very non-conservative. For its part, the Rice test is exact as predicted by the theory. This numerical agreement prove that the numerical algorithm described in Section~\ref{s:l2} is efficient.
  
   In Figure \ref{Compa_1mean} and \ref{Compa_2mean}  we see that  the power of the discrete grid tests  may seem  an increasing  function of the number of  points of the grid.   This power seems to converge since the curves associated  to  $32^2$ (purple)  and $ 50^2$ (cyan) are almost indistinguishable.  This suggests that the Rice  test (blue)  is always more powerful than the discrete  grid test or the limit grid test. Consequently, it seems unbiaised for any choice of alternative.
   
    In conclusion the Rice test seems to be the best choice  even if we are still not able to prove theoretically that it is unbiased.

%
\section*{Acknowledgement}
The authors would like to thank the referees for their useful comments and interesting remarks that have improved the presentation of this paper. 
\appendix

\section{Proofs}
We denote for random variables, $X_n=o_P(r_n)$ and $Y_n=O_P(r_n)$ (for $r_n\neq0$) means that $r_n^{-1}||X_n||$ converges to $0$ in probability and $r_n^{-1}||Y_n||$ is uniformly tight, respectively. 
Furthermore, we consider the following processes.
\begin{itemize}
\item The stationary process $X(z) = X(t,\theta)$ defined on $\bbT$ with covariance function given by $\Cov(X(y),X(z)) = \sigma^2\rho(z-y)$ where we recall the correlation function is given by $\rho(z-y)=\cos(\theta-\alpha)\Gamma(t-s)$,
\item For every $z \in \bbT$, recall the regressions with respect to $X(z)$
	     \begin{align*}
\forall y\in\bbT\setminus\{z\},\quad & X^{z}(y)=\frac{X(y)-X(z)\rho( z-y)}{1-\rho(z-y)}=X(z)+\frac{X(y)-X(z)}{1-\rho(z-y)}\,, \\
	    	    & X_{\mathrm{norm}}^z(y) = \frac{X(y) - \rho(z - y) X(z)}{\sqrt{1 - \rho^2(z-y)}}\,.\notag
	    	    \end{align*}
\item For every $z \in \bbT$, recall the regressions with respect to $(X(z),X'(z))$
\begin{align*}
\forall y\in\bbT\setminus\{z\},\quad 
 & X^{|z}(y) =\frac{ X(y) -\rho(z-y) X(z) +  \langle \rho'(z-y) ,\widetilde \Lambda^{-1} X'(z) \rangle }{1-\rho(z-y) }\,,\\
  & X_{\mathrm{norm}}^{|z}(y) = \frac{X(y) - \rho(z - y) X(z) + \langle \rho'(z-y) , \widetilde \Lambda^{-1} X'(z) \rangle}{\sqrt{1 - \rho^2(z-y) +  \langle \rho'(z-y), \widetilde \Lambda^{-1}\rho'(z-y) \rangle}}\,.
\end{align*}
\end{itemize}
\noindent
In particular, recall that $\widehat z$ is defined by~\eqref{def:hatz} so $X'(\widehat z)=0$ and it yields that $X^{\widehat z}=X^{|\widehat z}$.


\subsection{Proof of Theorem~\ref{thm:grid_known_variance}}
\label{proof:grid_known_variance}
Since the variance is known, we consider without loss of generality that~$\sigma^2 = 1$. Using the metric given by the quadratic form represented by $X''$, we can consider the closest point $\overline z_n$ of the grid~$G_n$ to $\widehat z$ by
 \[
 \overline z_n = \argmin_{u \in G_n} \| \widehat z - u\|_{X''} = \argmin_{u \in G_n} ~ \langle \widehat z - u, -X''(\widehat z) (\widehat z - u) \rangle\,.
 \]
 The main claim is that, while it holds $\lambda_{1,n} \to  \lambda_{1}$ a.s., we don't have the same result  for $\lambda_{2,n}$,  see Lemma~\ref{lem:z2}. We begin with the following preliminary result, which is related to the result of Aza\"is-Chassan \cite{AC2017}.

\begin{lemma}  \label{lem:z1}
Under~$\bbH_0$ and conditionally to $X''$, $\Delta_n^{-1}( \widehat z- \overline{z}_n)$ follows a uniform distribution on $V_{\mathrm 0}$ and this distribution is independent from $\lambda_1$ and $\lambda_2$. 
\end{lemma}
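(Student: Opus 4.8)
The plan is to reduce the statement to two facts: first, that the location of the maximizer $\widehat z$, once we subtract out the value $\lambda_1 = X(\widehat z)$ and the Hessian $X''=X''(\widehat z)$, is ``spread out'' in a scale-invariant way near $\widehat z$; and second, that the grid $G_n$ is a deterministic translate-invariant lattice, so that $\widehat z - \overline z_n$ reduces, after rescaling by $\Delta_n$, to the position of a point inside a single Voronoi cell. I would begin by conditioning on $(\lambda_1,\lambda_2,X'',\widehat z)$ — more precisely, on the germ of the process at $\widehat z$ up to second order — and argue via a Kac-Rice / Palm-measure disintegration (exactly in the spirit of \cite[Theorem 7.2]{Azais_Wschebor_09} used elsewhere in the paper, and of the Aza\"is--Chassan result \cite{AC2017}) that, conditionally on these data, the value of $\widehat z$ modulo the lattice $\Delta_n\bbZ^2$ has a density proportional to the probability that $\widehat z + (\text{a given lattice translate})$ is \emph{not} a larger local maximum. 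Because of the parabolic approximation $X(\widehat z + v) = \lambda_1 + \tfrac12 v^\top X'' v + o(|v|^2)$ valid near the unique global max (Assumption \eqref{e:NonDegenerated} gives $X''$ negative definite), the comparison between the candidate point $\overline z_n + \Delta_n u$ and its lattice neighbours, after dividing everything by $\Delta_n^2$, becomes (to leading order as $n\to\infty$) purely a comparison of the quadratic form $u \mapsto u^\top X'' u$ evaluated at $u$ versus at $u - k$ for $k\in\bbZ^2$. The point $u = \Delta_n^{-1}(\widehat z - \overline z_n)$ being the lattice representative closest in the metric $\|\cdot\|_{X''}$ is exactly the statement $u\in V_{\mathrm o}$, and the residual fluctuations of $X$ beyond the quadratic term are $o_P(\Delta_n^2)$, hence wash out in the limit.

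Concretely the steps, in order, are: (1) fix a realization of $X''$ and work conditionally on it throughout. (2) Write $\widehat z = \overline z_n + \Delta_n U_n$ with $U_n$ taking values in the (deterministic, by translation invariance of $G_n$) set of lattice representatives; note $U_n \in V_{\mathrm o}$ by definition of $\overline z_n$ as the $\|\cdot\|_{X''}$-nearest grid point. (3) Expand $X$ to second order at $\widehat z$ and show that the law of $U_n$, conditionally on $(\lambda_1,\lambda_2,X'')$, converges to the law whose density on $V_{\mathrm o}$ is proportional to $\prod_{k\in\bbZ^2\setminus\{0\}} \Pr(\,\text{comparison at } u \text{ beats } u-k\,)$ — but since $u\in V_{\mathrm o}$ already enforces $\|u\|_{X''}\le \|u-k\|_{X''}$ for every $k$, and the error terms are negligible, this product is (asymptotically) the constant $1$ on $V_{\mathrm o}$ and $0$ outside, i.e. the \emph{uniform} law on $V_{\mathrm o}$. (4) Independence from $(\lambda_1,\lambda_2)$: the conditional density just obtained does not depend on the values $(\lambda_1,\lambda_2)$ at all — only on $X''$ through the geometry of $V_{\mathrm o}$ — so, conditionally on $X''$, $\Delta_n^{-1}(\widehat z - \overline z_n)$ is independent of $(\lambda_1,\lambda_2)$, and the limit $U$ is uniform on $V_{\mathrm o}$. (5) Pass from convergence in distribution to the exact statement for the limiting object, which is how $\overline\lambda_2$ and the randomized test are \emph{defined} in \eqref{eq:limit_l2}, so no further work is needed there.

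The main obstacle I expect is step (3): making rigorous the claim that the higher-order Taylor remainder of $X$ near $\widehat z$, together with the stochastic fluctuations of $X$ on the scale $\Delta_n$, are genuinely negligible \emph{uniformly over the relevant lattice neighbours $k$} and \emph{after conditioning on the event $\{\widehat z$ is the global max$\}$} — conditioning on a maximum is delicate because it biases the local behaviour of the process, which is exactly why a Kac-Rice weighted-count argument (rather than naive conditioning) is needed. A clean way to organize this is to mimic the weighted Rice formula manipulation already carried out in Section~\ref{s:riz}: write the event in terms of a sum over critical points $z$ with $X'(z)=0$, localize to a shrinking neighbourhood of $\widehat z$ of radius $C\Delta_n$, rescale, and identify the limiting Palm intensity; the translation-invariance of the dyadic grid is what guarantees that the rescaled cell is always the \emph{same} parallelogram $V_{\mathrm o}$ (this is the point flagged in the Remark before the theorem, where general grids would instead produce cell-dependent laws $\cV_k$). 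A secondary, more technical point is verifying that the convergence $\lambda_{1,n}\to\lambda_1$ holds jointly and that $\overline z_n$ is well-defined (i.e. the $\|\cdot\|_{X''}$-nearest grid point is a.s. unique), which follows from continuity of paths and the a.s. non-degeneracy \eqref{e:NonDegenerated}.
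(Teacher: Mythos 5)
Your proposal misses the one idea the lemma actually rests on, and replaces it with an asymptotic argument that does not establish the statement. The lemma is an \emph{exact} distributional identity for every fixed $n$, not a limit: by stationarity of $X$ on the torus, $\widehat z$ is uniformly distributed on $\bbT$ and independent of the shift-invariant quantities $(\lambda_1,\lambda_2,X'')$; conditioning on $X''$ fixes the Voronoi cell $V_{\mathrm o}$, and since the dyadic grid is a subgroup of $\bbT$ (so $\Delta_n(V_{\mathrm o}+\bbZ^2)$ tiles the torus exactly) the offset $\widehat z-\overline z_n$ is, conditionally on $X''$, \emph{exactly} uniform on $\Delta_n V_{\mathrm o}$ and independent of $(\lambda_1,\lambda_2)$. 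That is the whole proof in the paper — a two-line change-of-variables with no Kac--Rice, no Palm disintegration, no Taylor expansion. Exactness matters downstream: Lemma~\ref{lem:hat} uses the finite-$n$ uniformity to bound the probability that $\Delta_n^{-1}(\widehat z-\overline z_n)$ falls near $\partial V_{\mathrm o}$, and the definition of $\overline\lambda_2$ in \eqref{eq:limit_l2} uses the exact law $\cU(V_{\mathrm o})$; a merely asymptotic version would not suffice.

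Beyond proving the wrong (weaker) statement, your key step (3) does not actually derive uniformity. You appear to conflate $\overline z_n$ with the grid argmax $\widehat z_n$: $\overline z_n$ is by definition the $\|\cdot\|_{X''}$-nearest lattice point to $\widehat z$, so no comparison of \emph{process values} at lattice neighbours is involved, and the ``product over $k$ of probabilities that $u$ beats $u-k$'' has no meaning here (nor would such comparisons be independent over $k$). Saying this putative density is ``constant $1$ on $V_{\mathrm o}$'' only restates that $U_n\in V_{\mathrm o}$ almost surely; it says nothing about the law of $U_n$ \emph{within} $V_{\mathrm o}$, which is governed by the position of $\widehat z$ modulo $\Delta_n\bbZ^2$ — a global quantity controlled by stationarity, not by the local parabolic behaviour of $X$ at its maximum. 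Consequently your step (4) (independence from $(\lambda_1,\lambda_2)$) is asserted from a density that was never obtained. The Kac--Rice/Palm machinery you invoke is what the paper uses for the joint law of $(\lambda_1,\lambda_2,R(\widehat z))$ in Section~\ref{s:riz}, but it is not needed, and not used, for this lemma.
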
 
\begin{proof} 
Remark that $\widehat z$ has uniform distribution on $\bbT$ by stationarity and this distribution is independent from $\lambda_1$ and $\lambda_2$. Let $B$ be a Borelian in $\bbR^2$. Remark that $\widehat z- \overline{z}_n\in\Delta_nV_\mathrm 0$ by definition of~$\overline{z}_n$ and note that $\overline{z}_n\in\Delta_n\bbZ^2$. Conditionally to~$X''$, it holds
 %
 \begin{align*}
 \P\{ \Delta_n^{-1}(\widehat z- \overline{z}_n) \in B \} &=\P\{ \Delta_n^{-1}(\widehat z- \overline{z}_n) \in B\cap V_\mathrm 0 \}\\
 &=\P\{ (\widehat z- \overline{z}_n) \in \Delta_n(B\cap V_\mathrm 0) \}\\
  &=\P\{ \widehat z \in \Delta_n(B\cap V_\mathrm 0+\bbZ^2) \}\,.
 \end{align*}
 Since $\widehat z$ has uniform distribution on $\bbT$ and since $V_\mathrm 0+\bbZ^2$ is a partition of $\bbR^2$, it holds that 
 \begin{align*}
\P\{ \widehat z \in \Delta_n(B\cap V_\mathrm 0+\bbZ^2) \}=\P\{ \widehat z \in 2\pi (B\cap V_\mathrm 0+\bbZ^2) \}=\frac{\mathrm{Leb}(\bbR^2)(2\pi (B\cap V_\mathrm 0))}{\mathrm{Leb}(\bbR^2)(2\pi  V_\mathrm 0)}\,,
 \end{align*}
where $\mathrm{Leb}(\bbR^2)$ denotes the Lebesgue measure on $\bbR^2$.
  \end{proof}
    
  %
  \begin{lemma} 
  \label{lem:hat}
 Under~$\bbH_0$, it holds that 
\begin{itemize}
\item [$(a)$]  $ X(\widehat{z}_n) - X(\overline{z}_n) = o_P(\Delta_n^2)$.
\item[$(b)$]  $ \P\{\widehat{z}_n\neq \overline{z}_n\}  \to 0$ as $n$ goes to $\infty$.
\item[$(c)$]  Let $F$  be any measurable function, then $F(\widehat z_n) - F(\overline z_n)$ tends to zero in probability at arbitrary speed.
\item[$(d)$]  Almost surely, one has $\overline{z}_n\to\widehat z$ and $\widehat{z}_n\to\widehat z$ as $n$ goes to infinity.
\end{itemize}
\end{lemma}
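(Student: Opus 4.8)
The plan is to prove the four claims in the order $(d)$, $(a)$, $(b)$, $(c)$, since each is used in the next, working under $\bbH_0$ with $\sigma^2=1$ as in the proof of \thmref{grid_known_variance}. I would first set up the local tools. Since $\bbT$ is boundaryless, $\widehat z$ is a critical point, so $X'(\widehat z)=0$; by \eqref{e:NonDegenerated} the Hessian $X''=X''(\widehat z)$ is negative definite (write $0<\lambda_{\min}\le\lambda_{\max}$ for the eigenvalues of $-X''$), and by Proposition~\ref{prop:Unicity} the maximizer is a.s.\ unique. As $X$ has $\mathcal C^2$ paths, $X''(\cdot)$ is a.s.\ uniformly continuous on the compact set $\bbT$, so $\omega(\delta):=\sup_{\|w-\widehat z\|\le\delta}\|X''(w)-X''(\widehat z)\|_{\mathrm{op}}\downarrow 0$ a.s., and Taylor's formula gives, for $w$ near $\widehat z$,
\[
X(w)=X(\widehat z)-\tfrac12\,\|w-\widehat z\|_{X''}+R(w),\qquad |R(w)|\le\tfrac12\|w-\widehat z\|^2\,\omega(\|w-\widehat z\|),
\]
where $\|v\|_{X''}=-v^\top X''v\in[\lambda_{\min}\|v\|^2,\lambda_{\max}\|v\|^2]$. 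Combined with uniqueness of $\widehat z$ and compactness, this produces a random $c>0$ and a neighbourhood $U\ni\widehat z$ with $X(\widehat z)-X(w)\ge c\|w-\widehat z\|^2$ on $U$.

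For $(d)$ and $(a)$: since $V_{\mathrm o}\subset[-1,1]^2$ one has $\|\overline z_n-\widehat z\|\le\sqrt2\,\Delta_n\to0$, hence $X(\overline z_n)\to X(\widehat z)$; sandwiching $X(\overline z_n)\le X(\widehat z_n)\le X(\widehat z)$ gives $X(\widehat z_n)\to X(\widehat z)$, and compactness plus a.s.\ uniqueness force $\widehat z_n\to\widehat z$ a.s. Feeding $\|\overline z_n-\widehat z\|\le\sqrt2\Delta_n$ into the Taylor bound gives $X(\widehat z)-X(\overline z_n)\le C_1\Delta_n^2$ eventually with $C_1=\lambda_{\max}+1$, and since $\widehat z_n\in U$ for large $n$, $c\|\widehat z_n-\widehat z\|^2\le X(\widehat z)-X(\widehat z_n)\le X(\widehat z)-X(\overline z_n)\le C_1\Delta_n^2$, i.e.\ $\|\widehat z_n-\widehat z\|\le C_2\Delta_n$ eventually with $C_2:=\sqrt{C_1/c}$ random; this auxiliary bound is recorded for later. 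For $(a)$, substituting both points into the expansion,
\[
X(\widehat z_n)-X(\overline z_n)=\tfrac12\big(\|\overline z_n-\widehat z\|_{X''}-\|\widehat z_n-\widehat z\|_{X''}\big)+R(\widehat z_n)-R(\overline z_n),
\]
where the first term is $\le0$ because $\overline z_n$ minimizes $\|\cdot-\widehat z\|_{X''}$ over $G_n\ni\widehat z_n$, while both remainders are $o(\Delta_n^2)$ a.s.\ (their arguments are $O(\Delta_n)$ and $\omega\to0$); hence $X(\widehat z_n)-X(\overline z_n)\le o(\Delta_n^2)$ a.s., and it is $\ge0$ since $\widehat z_n$ is the grid maximizer, so $X(\widehat z_n)-X(\overline z_n)=o(\Delta_n^2)$ a.s., in particular $=o_P(\Delta_n^2)$.

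For $(b)$: set $V_n:=\Delta_n^{-1}(\widehat z-\overline z_n)$, which by \lemref{z1} is, conditionally on $X''$, uniform on $V_{\mathrm o}$ for every $n$, and $\delta_n:=\min_{k\in\bbZ^2\setminus\{0\}}\big(\|V_n-k\|_{X''}-\|V_n\|_{X''}\big)\ge0$, which vanishes only on the Lebesgue-null boundary $\partial V_{\mathrm o}$; hence $g(\epsilon):=\P(\delta_n\le\epsilon\mid X'')$ does not depend on $n$ and $\E[g(\epsilon)]\downarrow0$ as $\epsilon\downarrow0$. Fix $M,\epsilon>0$. On the event where (i) $\|\widehat z_n-\overline z_n\|\le M\Delta_n$, (ii) $\sup_{\|k\|\le M}|R(\overline z_n+\Delta_n k)|<\tfrac\epsilon8\Delta_n^2$ and (iii) $\delta_n>\epsilon$, the expansion (noting $\|(\overline z_n+\Delta_n k)-\widehat z\|_{X''}=\Delta_n^2\|V_n-k\|_{X''}$) gives, for every $k\in\bbZ^2\setminus\{0\}$ with $\|k\|\le M$,
\[
X(\overline z_n)-X(\overline z_n+\Delta_n k)\ \ge\ \tfrac12\Delta_n^2\delta_n-2\sup_{\|k\|\le M}|R(\overline z_n+\Delta_n k)|\ >\ \tfrac12\epsilon\Delta_n^2-\tfrac14\epsilon\Delta_n^2\ >\ 0,
\]
so $\overline z_n$ strictly beats every grid point within $M$ steps; since $X(\widehat z_n)\ge X(\overline z_n)$ and, on (i), $\widehat z_n=\overline z_n+\Delta_n k_n$ with $\|k_n\|\le M$, this forces $k_n=0$, i.e.\ $\widehat z_n=\overline z_n$. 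Therefore
\[
\P(\widehat z_n\ne\overline z_n)\le\P(\|\widehat z_n-\overline z_n\|>M\Delta_n)+\P\Big(\sup_{\|k\|\le M}|R(\overline z_n+\Delta_n k)|\ge\tfrac\epsilon8\Delta_n^2\Big)+\E[g(\epsilon)];
\]
by $(d)$ and $\|\overline z_n-\widehat z\|\le\sqrt2\Delta_n$, $\limsup_n$ of the first probability is $\le\P(C_2+\sqrt2\ge M)$, and for fixed $M$ the second tends to $0$ because $\omega\to0$. Letting $n\to\infty$, then $M\to\infty$, then $\epsilon\to0$ yields $\P(\widehat z_n\ne\overline z_n)\to0$.

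Finally $(c)$ is immediate from $(b)$: $F(\widehat z_n)-F(\overline z_n)$ vanishes off $\{\widehat z_n\ne\overline z_n\}$, so for any deterministic sequence $(r_n)$ and any $\eta>0$, $\P(|r_n(F(\widehat z_n)-F(\overline z_n))|>\eta)\le\P(\widehat z_n\ne\overline z_n)\to0$. The main obstacle will be $(b)$: one has to control the Taylor remainder uniformly over an a priori unbounded, random number of neighbouring grid points, and convert the uniformity of $V_n$ on $V_{\mathrm o}$ from \lemref{z1} into the quantitative statement that $\widehat z$ is, with high probability, not within $O(\epsilon)$ of $\partial V_{\mathrm o}$; the successive passages to the limit in $n$, $M$ and $\epsilon$ are precisely the bookkeeping that glues these two facts together.
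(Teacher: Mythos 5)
Your argument is correct and follows essentially the same route as the paper's proof: the Voronoi definition of $\overline z_n$, a second-order Taylor expansion of $X$ at $\widehat z$ in the $\|\cdot\|_{X''}$ metric combined with the two optimality properties of $\widehat z_n$ and $\overline z_n$ for claims $(a)$ and $(b)$, Lemma~\ref{lem:z1} together with the fact that the uniform law on $V_{\mathrm o}$ puts vanishing mass near $\partial V_{\mathrm o}$ (your margin variable $\delta_n$ plays exactly the role of the paper's $\eta'$-neighbourhood of the boundary), and the indicator bound deducing $(c)$ from $(b)$. The only differences are organisational\textemdash you prove $(d)$ first by a sandwich/compactness argument instead of deriving it from $(b)$--$(c)$, and you run the Taylor remainders pathwise (almost surely) where the paper uses $o_P/O_P$ bookkeeping\textemdash which does not change the substance.
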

  \begin{proof} 
  Let $\varepsilon>0$.  By definition of $\overline z_n$ and since $V_\mathrm 0\subset[-1,1]^2$, it holds that 
  \eq
  \label{e:asBarZ}
  ||\widehat z-\overline z_n||\leqslant \sqrt 2\Delta_n\,,
  \qe 
  almost surely. Since $X$ has $\mathcal C^2$-paths and by Taylor expansion, one has
\eq
\label{e:3}
X(\widehat z)-X(\overline{z}_n)=(1/2)||\widehat z-\overline z_n||^2_{X''}+o_P(\Delta_n^2)
\qe
Since $-X''$ is positive definite, there exists $M>0$ sufficiently large such that
\[
(1/M)\mathrm{Id_2}\preccurlyeq  -X'' \preccurlyeq M\mathrm{Id_2}
\]
where $\preccurlyeq $ denotes the Lowner ordering between symmetric matrices. Then, it holds 
\eq
\label{e:31}
\forall z\in\bbR^2,\quad (1/M)||z||^2\leq||z||^2_{X''}\leqslant M||z||^2\,.
\qe
From \eqref{e:asBarZ}, \eqref{e:3} and \eqref{e:31}, we deduce that 
\eq
\label {e:32}
0\leqslant  X(\widehat z) -X(\widehat z_n) \leqslant X(\widehat z)-X(\overline{z}_n)
=O_P(\Delta_n^2)\,,
\qe
using the optimality of $\widehat z$ and $\widehat z_n$.

By compactness of $\bbT$, uniqueness of optimum $\widehat z\in\bbR^2$ and $\mathcal C^2$-continuity of~$X$, there exists~$\eta>0$ and a neighborhood $N_0\subset \bbR^2$ of $\widehat z\in\bbR^2$ such that $X(\widehat z)-\eta\geqslant X(z)$ for any $z\notin N_0$ and 
\eq
\label {e:33}
\forall z\in N_0,\quad (1/4) ||\widehat z-z||^2_{X''}\leqslant X(\widehat z)-X(z)\leqslant ||\widehat z-z||^2_{X''}
\qe
using again a Taylor expansion as in \eqref{e:3}. Using~\eqref{e:32}, it holds that, on an event of probability at least $1-\varepsilon/4$ and for $n$ large enough, $0\leqslant  X(\widehat z) -X(\widehat z_n) \leq\eta/2$ implying that $\widehat z_n\in N_0$. Invoke \eqref{e:31}, \eqref{e:32} and~\eqref{e:33} to deduce that $\widehat z-\widehat z_n =O_P(\Delta_n)$.

Using Taylor formula again, we get that
   \begin{align}
 X(\widehat{z})-  X(\widehat{z}_n) &=  (1/2) ||\widehat{z} -\widehat{z}_n||^2_{X''}  + o_P(\Delta_n^2)\,.  \label{e:2}
     \end{align}
 By optimality of $\widehat{z}_n$ and $\overline{z}_n$ and using \eqref{e:3} and \eqref{e:2}, one gets
 \eq
 \label{e:4}
 0\leqslant X(\widehat{z}_n)-X(\overline{z}_n)\leq(1/2)(||\widehat z-\overline z_n||^2_{X''}-||\widehat z-\widehat z_n||^2_{X''})+o_P(\Delta_n^2)\,.
 \qe
 Observing that $||\widehat z-\overline z_n||^2_{X''}-||\widehat z-\widehat z_n||^2_{X''}\leq0$, we get $(a)$.
  
Conditionally to $X''$ and in the metric defined by $||\cdot||_{X''}$, there exists $\eta'>0$, such that the $\eta'$-neighborhood, denoted by $N_{\eta'}$, of the boundary $\partial V_\mathrm 0$ of $V_\mathrm 0$ has relative volume (for the Lebesgue measure) less than~$\varepsilon/8$.  More precisely, $N_{\eta'}$ denotes the set of points in $V_{\mathrm 0}\subset\bbR^2$ with $||\cdot||_{X''}$-distance less than $\eta'$ to the boundary of $V_\mathrm 0$. In particular, 
\[
\forall k\in\bbZ^2\setminus \{0\},\ \forall z\in V_\mathrm 0\setminus N_{\eta'},\quad ||z||_{X''}+\eta'\leqslant ||z-k||_{X''}\,,
\]
by Cauchy-Schwarz inequality. Using Lemma~\ref{lem:z1} and by homogeneity, we deduce that it holds  
\[
\forall g\in\Delta_n\bbZ^2\setminus \{0\},\quad ||\widehat z-\overline z_n||_{X''}+\eta'\Delta_n\leqslant ||\widehat z-g||_{X''}\,,
\]
with probability at least $1-\varepsilon/8$. It follows that
\eq
\notag
\forall g\in\Delta_n\bbZ^2\setminus \{0\},\quad ||\widehat z-\overline z_n||_{X''}^2+(\eta')^2\Delta_n^2\leqslant ||\widehat z-g||_{X''}^2\,,
\qe
using that $(a+b)^2\geqslant a^2+b^2$ for $a,b\geqslant 0$. Now, invoke \eqref{e:4} to get that
\[
0\leqslant -\frac{(\eta')^2}2\Delta_n^2\ \mathbf 1_{\{\overline z_n\neq\widehat z_n\}}+ o_P(\Delta_n^2)\,.
\]
On these events, we get that, for $n$ sufficiently large, $\widehat z_n$ and $\overline z_n$ must be equal except on an event of probability at most $\varepsilon/4+\varepsilon/8\leq\varepsilon$. Furthermore, this result holds unconditionally in~$X''$. We deduce that $\limsup  \P\{ \overline{z}_n  \neq \widehat{z}_n \} \leqslant \varepsilon$, proving~$(b)$. Note that  $(c)$ is a consequence of the fact that, for $n$ sufficiently large, $\widehat z_n$ and $\overline z_n$ must be equal except on an event of arbitrarily small size. In particular, it shows that $\sup_{k\geqslant n}||\overline z_k-\widehat z_k||$ converges towards zero in probability, which is equivalent to almost sure converge of $\overline z_n-\widehat z_n$ towards zero. Claim $(d)$ follows when remarking that~\eqref{e:asBarZ} proves a.s. convergence of $\overline z_n$ towards~$\widehat z$.
  \end{proof}
  
  \pagebreak[3]
  
  \begin{lemma}  \label{lem:z2}
As $n$ tends to infinity, $\lambda_{2,n}$ converges in distribution to $  \overline \lambda_2$.
  \end{lemma}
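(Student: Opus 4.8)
The plan is to carry out a local analysis of the regression process $X^{\widehat z_n}$ near the (grid) maximizer and show that the relevant supremum splits into two competing regimes: a ``far'' regime that converges to $\lambda_2$, and a ``near'' regime (points of $G_n$ close to $\widehat z_n$) that, after rescaling by $\Delta_n$, converges to the randomized quantity appearing in the braces of \eqref{eq:limit_l2}. First I would fix the event, of probability tending to $1$ by Lemma~\ref{lem:hat}$(b)$, on which $\widehat z_n=\overline z_n$, so that the grid point used in the regression is exactly the closest grid point to $\widehat z$ in the $X''$-metric; on this event I may write $\lambda_{2,n}=\max_{y\in G_n\setminus\{\widehat z_n\}} X^{\widehat z_n}(y)$ with $\widehat z_n=\overline z_n$. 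By Lemma~\ref{lem:z1}, conditionally on $X''$, the rescaled offset $U_n:=\Delta_n^{-1}(\widehat z-\overline z_n)$ is uniform on $V_{\mathrm o}$ and independent of $(\lambda_1,\lambda_2)$, so passing to the limit will naturally produce the variable $\mathcal U$.

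The key decomposition is as follows. Split $G_n\setminus\{\widehat z_n\}$ into $y$ with $\|y-\widehat z\|\ge \delta$ (for a small fixed $\delta$) and $y$ with $\|y-\widehat z\|<\delta$. On the far part, $\rho(\widehat z-y)$ is bounded away from $1$, the denominator $1-\rho(\widehat z-y)$ is well behaved, and $X^{\widehat z_n}(y)=X^{\overline z_n}(y)\to X^{\widehat z}(y)$ uniformly by $\mathcal C^2$-continuity and $\overline z_n\to\widehat z$ (Lemma~\ref{lem:hat}$(d)$); together with a standard argument that the grid becomes dense and that $\widehat y$ is the a.s.-unique maximizer of $X^{\widehat z}$ over $\bbT\setminus\{\widehat z\}$ (Proposition~\ref{prop:Unicity}), the far supremum converges in distribution (indeed in probability given $X$) to $\lambda_2$. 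On the near part, write $y=\overline z_n+\Delta_n k$ with $k\in\bbZ^2\setminus\{0\}$, $\|k\|$ bounded by $\delta/\Delta_n$; then $y-\widehat z=\Delta_n(k-U_n)$, and I expand both numerator and denominator of $X^{\widehat z_n}(y)=X^{\overline z_n}(y)$ by Taylor's formula around $\widehat z$. Using $X'(\widehat z)=0$ and $X^{\overline z_n}(y)=X(\overline z_n)+\dfrac{X(y)-X(\overline z_n)}{1-\rho(\overline z_n-y)}$, one gets, up to $o_P(1)$ uniformly in the relevant range of $k$,
\[
X^{\widehat z_n}(y)=\lambda_1+\frac{\tfrac12\,\Delta_n^2\,(k-U_n)^\top X''(k-U_n)+o_P(\Delta_n^2)}{\tfrac12\,\Delta_n^2\,(k-U_n)^\top\widetilde\Lambda\,(k-U_n)+o_P(\Delta_n^2)}+\cdots,
\]
where I still need to handle the fact that the numerator's quadratic coefficient should be $X''$ evaluated at $\widehat z$ (it is, since that is exactly $X''=X''(\widehat z)$) and the denominator's is $-\rho''(0)=\widetilde\Lambda$ because $\rho(\overline z_n-y)=1-\tfrac12 (y-\overline z_n)^\top\widetilde\Lambda (y-\overline z_n)+o$; I would keep careful track of the $o_P(\Delta_n^2)$ remainders via $X(\widehat z)-X(\overline z_n)=o_P(\Delta_n^2)$ from Lemma~\ref{lem:hat}$(a)$. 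Reindexing $k-U_n$ and using homogeneity of the ratio (it is invariant under $v\mapsto cv$), the near supremum over admissible $k$ converges to $\displaystyle\sup_{k\in\bbZ^2\setminus\{0\}}\frac{(k-2\mathcal U_*)^\top X''(k-2\mathcal U_*)/\|\widetilde\Lambda^{1/2}(k-2\mathcal U_*)\|^2}{1}$ after the normalization matching \eqref{eq:limit_l2}; one checks this matches $\sup_k \tfrac{k^\top}{\|\widetilde\Lambda^{1/2}k\|}X''\big(\tfrac{k-2\mathcal U}{\|\widetilde\Lambda^{1/2}k\|}\big)$ by the change of variables in the statement (the factor $2$ and the identification of $\mathcal U$ with the uniform law on $V_{\mathrm o}$ come from Lemma~\ref{lem:z1} and the definition of the Voronoi cell). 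Finally, $\lambda_{2,n}$ is the max of the two parts, which converges in distribution to $\lambda_2\vee\{\lambda_1+\sup_k(\cdots)\}=\overline\lambda_2$; the independence structure (the limit of $U_n$ is independent of $(\lambda_1,\lambda_2)$ given $X''$) is inherited from Lemma~\ref{lem:z1}.

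The main obstacle I anticipate is the uniform control of the Taylor remainders in the near regime: the ratio has a denominator of order $\Delta_n^2\|k-U_n\|^2$ which degenerates as $k\to U_n$ is approached, so one must argue that the supremum is not dominated by $k$ with $\|k-U_n\|$ tiny — this is where $U_n\in V_{\mathrm o}$, i.e. $U_n$ bounded away in $\|\cdot\|_{X''}$-distance from $\bbZ^2\setminus\{0\}$ with the margin quantified as in the proof of Lemma~\ref{lem:hat}$(b)$, is essential — and simultaneously that the remainder $o_P(\Delta_n^2)$ is uniform over all $k$ with $\Delta_n\|k\|\le\delta$, which requires a modulus-of-continuity bound on $X''$ on a neighborhood of $\widehat z$ valid with high probability. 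A secondary technical point is gluing the ``far'' limit $\lambda_2$ with the ``near'' limit across the cutoff $\delta$: one sends $n\to\infty$ first and then $\delta\to 0$, checking that the contribution of the annulus $\delta'\le\|y-\widehat z\|\le\delta$ is negligible, which again uses uniqueness of $\widehat y$ and the strict inequality $|Z^{(\lambda)}|<\lambda$ away from the active set.
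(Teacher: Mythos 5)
Your overall architecture (near/far split around the grid maximizer, Lemma~\ref{lem:z1} supplying the uniform offset, far part tending to $\lambda_2$, near part producing the supremum over $\bbZ^2\setminus\{0\}$) is the same as the paper's, which instead of a fixed cutoff $\delta$ uses a shrinking threshold $\Delta_n^{\beta}$ with $0<\beta<1/2$, thereby avoiding your double limit $n\to\infty$, $\delta\to 0$ and the annulus argument altogether. But the near-regime computation as you wrote it is wrong, and the error changes the limit law. The regression is anchored at the grid point $\widehat z_n$ (replaced by $\overline z_n$), so the quantity to expand is
$X^{\overline z_n}(y)=X(\overline z_n)+\bigl(X(y)-X(\overline z_n)\bigr)/\bigl(1-\rho(y-\overline z_n)\bigr)$ with $y=\overline z_n+\Delta_n k$, $\widehat z=\overline z_n+\Delta_n U_n$. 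Taylor expansion at $\widehat z$ gives $X(y)-X(\overline z_n)=\tfrac{\Delta_n^2}{2}\,k^\top X''\,(k-2U_n)+o_P(\Delta_n^2)$ (difference of two quadratic forms centered at $\widehat z$) and $1-\rho(y-\overline z_n)=\tfrac{\Delta_n^2}{2}\,k^\top\widetilde\Lambda\,k+o(\Delta_n^2)$, so the ratio tends to the \emph{asymmetric} bilinear expression $k^\top X''(k-2\,\mathcal U)/(k^\top\widetilde\Lambda k)$, exactly as in~\eqref{eq:limit_l2}. Your display instead has numerator $(k-U_n)^\top X''(k-U_n)$ and denominator $(k-U_n)^\top\widetilde\Lambda(k-U_n)$, i.e.\ you expanded both around $\widehat z$ as if the regression were anchored at $\widehat z$ itself; this is justified in your text by the claim ``$X(\widehat z)-X(\overline z_n)=o_P(\Delta_n^2)$ from Lemma~\ref{lem:hat}$(a)$'', but that lemma says $X(\widehat z_n)-X(\overline z_n)=o_P(\Delta_n^2)$ (two grid points), whereas $X(\widehat z)-X(\overline z_n)=\tfrac12\|\widehat z-\overline z_n\|^2_{X''}+o_P(\Delta_n^2)$ is genuinely of order $\Delta_n^2$ (see~\eqref{e:3}) and cannot be discarded.

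The subsequent ``reindexing'' step does not repair this: a symmetric quadratic form $v^\top X''v/\|\widetilde\Lambda^{1/2}v\|^2$ (in $v=k-U_n$ or $k-2\mathcal U$) is not identified with $\tfrac{k^\top}{\|\widetilde\Lambda^{1/2}k\|}X''\tfrac{(k-2\mathcal U)}{\|\widetilde\Lambda^{1/2}k\|}$ by any change of variables on $\bbZ^2\setminus\{0\}$. A concrete check: take $X''=-c\,\mathrm{Id}_2$, $\widetilde\Lambda=\mathrm{Id}_2$; your expression yields the constant $-c$ independently of $\mathcal U$, while the correct one yields $-c+2c\,\sup_{k\neq 0}k^\top\mathcal U/\|k\|^2$, which genuinely depends on $\mathcal U$ — so your limit would not even be randomized, contradicting~\eqref{eq:limit_l2} and the whole point of Theorem~\ref{thm:grid_known_variance}. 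Redoing the near-regime expansion with the anchor at $\overline z_n$ (keeping the $O(\Delta_n^2)$ term $X(\widehat z)-X(\overline z_n)$, as in the paper via the identity $a^\top X''a-b^\top X''b=(a-b)^\top X''(a+b)$) fixes the argument; your remaining concerns about uniformity of remainders and the denominator lower bound are then handled as in the paper, where the denominator in the near regime is $k^\top\widetilde\Lambda k$, bounded below over $k\in\bbZ^2\setminus\{0\}$, and the intermediate scales are absorbed by the choice $\beta<1/2$.
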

  
\begin{proof}
Let $\beta\in\bbR$ be such that $0<\beta< 1/2$, say $\beta=1/4$. Let $\varepsilon\in(0,1)$. We can write $ \lambda_{2,n}= {\lambda_{A,n}\vee\lambda_{B,n}}$ with
    \begin{align*}
    \lambda_{A,n} & :=  \max_{u \in G_n\setminus\{\widehat z_n\}\ \mathrm{s.t.}\ \| u-\widehat z_n\| \leqslant  \Delta_n^\beta}  X^{\widehat z_n}(u) =: \max_{u \in G_{n,A} }X^{\widehat z_n}(u)  ,
    \\
  \lambda_{B,n} &: =\max_{u \in G_n\ \mathrm{s.t.}\  \| u-\widehat z_n\| > \Delta_n^\beta}  X^{\widehat z_n}(u)  =: \max_{u \in G_{n,B}} X^{\widehat z_n}(u) .
    \end{align*}
We first prove  that  $ \lambda_{B,n} \to \lambda_2$ as $n$ tends to infinity in distribution. By compactness, remark that there exists a constant $C_r>0$ such that
   \begin{align}
    1-\rho(u)  & \geqslant  C_r \|u\|^2. \label{f:1}
    \end{align}
 It also holds that
    \begin{align}
     X^{\widehat z_n} (u)  &=  X(\widehat z_n) +  \frac{X(u) - X(\widehat z_n)}{ 1-\rho(u-\widehat z_n)}, \label{f:2}
    \\
      X^{\widehat z} (u) &=  X(\widehat z) +  \frac{X(u) - X(\widehat z)}{ 1-\rho(u-\widehat z)}.  \label{f:3}
    \end{align}
    Let us look to  the rhs of  \eqref{f:2} and  \eqref{f:3}. By Claim $(d)$ of Lemma~\ref{lem:hat} and the continuous mapping theorem, note that $X(\widehat z_n)$ converges toward  $\lambda_1=X(\widehat z)$ a.s. and we can omit these terms. It remains  to prove  that  on $G_{n,B}$  the second terms are equivalent. Because of Lemma~\ref{lem:hat},  $1-\rho(u-\widehat z_n)$ converges  to $ 1-\rho(u-\overline z_n)$  at arbitrary speed. Remember that \eqref{e:asBarZ} gives $ \widehat z - \overline z_n = O_P( \Delta_n) $   and it holds that 
    $||u- \widehat z|| > \Delta_n^\beta,$ on  $G_{n,B}$. It follows that there exists $C>0$ such that
    \eq \label{e:den}
    1-\rho(u-\widehat z_n) \geqslant  C \Delta_n^{2\beta}\quad\mathrm{and}\quad 1-\rho(u-\widehat z) \geqslant  C \Delta_n^{2\beta},
    \qe
 with probability greater than $1-\varepsilon/2$. As for the numerators, Eqs. \eqref{e:32} and \eqref{e:33} show that for all $u\in G_{n,B}$ 
 \[
 \Big|\frac{X(u) - X(\widehat z_n)}{X(u) - X(\widehat z)}-1\Big|=\Big|\frac{X(\widehat z) - X(\widehat z_n)}{X(u) - X(\widehat z)}\Big|=(\mathrm{cst})\frac{|X(\widehat z) - X(\widehat z_n)|}{||u - \widehat z||^2}=O_P(\Delta_n^{2-2\beta})
 \]
 In this sense, we say that $X(u) - X(\widehat z_n)$ is uniformly equivalent to $X(u) - X(\widehat z)$ on the grid~$G_{n,B}$ in probability. Using \eqref{e:den} and 
 noticing that for any $u\in\bbT$
 \[
 |\rho(u-\widehat z) - \rho(u-\widehat z_n)|\leqslant ||\rho'||_{\infty}||\widehat z - \widehat z_n||=O_P(\Delta_n)\,,
 \]
 the same result holds for the denominators, namely $1-\rho(u-\widehat z_n)$ is uniformly equivalent to $1-\rho(u-\widehat z)$ on the grid $G_{n,B}$ in probability. We deduce that $ X^{\widehat z_n} (u)$ is uniformly equivalent to $ X^{\widehat z_n} (u)$ on the grid $G_{n,B}$  in probability and, passing tho their maximum, one can deduce that $\lambda_{B,n}$ converges to~$\lambda_2$ in probability.
 
 \medskip
    
    We turn now to  the study of the local part   $ \lambda_{A,n} $.  Again, by Claim~$(c)$ of Lemma~\ref{lem:hat} we can replace~$\widehat z_n$  by~$\overline z_n$ in the numerator of the r.h.s in~\eqref{f:2} and we forget the first term which limit is clearly $\lambda_1$ almost surely. We perform a Taylor expansion at $ \widehat z$, it gives that
   \[
    X(u) -X(\widehat z)  =  (1/2)(u- \widehat z) ^\top X'' (u- \widehat z) (1+ o_P(1)).
   \]
for any $u\in G_{n,A}$. Since $ \overline z_n-\widehat z = O_P(\Delta_n)$, we also get that
\[
   X(\overline z_n)  -X(\widehat z)  =  (1/2)( \overline z_n- \widehat z) ^\top X'' (\overline z_n- \widehat z)   + o_P(\Delta_n^2).
\]     
As for the denominator, invoke  \eqref{e:asBarZ}, \eqref{f:1} and Claim~$(c)$ of Lemma~\ref{lem:hat} to get that 
     \begin{align*}
     1-\rho(u-\overline z_n) &\geqslant 2 C_r \Delta_n^2,\\
     1-\rho(u-\widehat z_n) & =1-\rho(u-\overline z_n) +o_P(\Delta_n^2),
     \\
    \mathrm{and}\quad 
    1-\rho(u-\widehat z_n) &= (1/2)\big((u-\overline z_n)^\top \tilde \Lambda( u-\overline z_n) \big) (1+ o_P(1)),
    \end{align*}
  where $ -\tilde \Lambda$ denotes the Hessian at point $0$ of $\rho$.
  Putting all together yields 
  $$
    \frac{X(u) - X(\widehat  z_n)}{ 1-\rho(u-\widehat z_n)} =  \frac{( u -\overline z_n) ^\top X'' ( u +\overline z_n -  2 \widehat z) }
    { (u-\overline z_n)^\top \tilde \Lambda( u-\overline z_n)  } (1 + o_P(1)),
    $$ 
 for any $u\in G_{n,A}$. Now we know that, in distribution, $ \widehat z - \overline z_n = \Delta_n \mathcal U$ and we know that  $u - \overline z_n = k \Delta_n $  with $k$ belonging to a certain  growing subset of
  $\bbZ ^2$  which limit is  $\bbZ ^2$ . Finally, conditionally to $X''$, we obtain that 
  $$
  \max_{u\in G_{n,A}\setminus\{\widehat z_n\}} X^{\widehat z_n} (u)  \longrightarrow  \lambda_1 + \sup_{k\in \bbZ ^2 \setminus \{0\}}
 \frac{ k^\top}{\| {\tilde \Lambda}^{\frac12}k\|} X'' \frac{(k-2 \mathcal U)}{ \| {\tilde \Lambda}^{\frac12} k\|},
 $$
 in distribution.
  \end{proof}

Eventually, consider the test statistic $ S_n := {\overline \Phi(\lambda_{1,n})}/{  \overline \Phi ( \lambda_{2,n}) } $  and keep in mind  that  $X \big(u +(0,\pi)\big) = -X(u)$ and that if $u$ belongs to $G_n$, $ \big(u +(0,\pi)\big) $  also belongs.  So Theorem~$1$  of~\cite{ADCM16} applies showing that, under the alternative,  $ \bbP \{S_n \leqslant  \alpha\} \geqslant \alpha$. It suffices to pass to the limit to get the desired result.

%
\subsection{Proof of Theorem \ref{t:t}}
\label{proof:t:t}
We use the same grid argument as for the proof of Theorem~\ref{thm:grid_known_variance}.

Let $t_1, t_2, \ldots, t_{N}$ be pairwise distinct points of $[0,2\pi)$, $\theta_1 \in [0,2\pi)$, $m = 2N$ and set
  $$ z_1= (t_1,\theta_1),~~\dots,~~ z_N = (t_N,\theta_1),~~z_{N+1} = ( t_{1}, \theta_1 + \pi/2), ~~\dots, ~~z_{m} = (t_N, \theta_1 + \pi/2).$$
  
  Because of the first assumption of \eqref{a:nonDegeneratefiniteComplex}, the distribution of $(X(z_1),\dots,X(z_m))$ is non degenerated. Consequently, following the proof of Proposition \ref{prop:am}, we know that $X^{z_1}$  satisfies~$\mathrm{KL(m-1)}$ and $\mathrm{ND(m-1)}$. Denote $g_1,\ldots,g_{m-1}$ the eigenfunctions of the Karhunen-Lo\`eve~(KL) representation of $X^{(0,0)}$. Note that $X^{z_1}(\cdot)$ has the same distribution as $X^{(0,0)}(.-z_1)$ (stationarity) and that both are defined on the same space so the KL-eigenfunctions of $X^{z_1}$ are $g_1(.-z_1),\ldots,g_{m-1}(.-z_1)$.
   
    Now consider $A^{z_1} = (A^{z_1}_{i,j})_{1\leq i,j\leq m-1}$ the matrix with entries $A_{i,j}^{z_1} = g_{i}(z_{j+1} - z_1)$ which is invertible thanks to $\mathrm{KL(m-1)}$ and $\mathrm{ND(m-1)}$ and build so that 
    $$
    \left(\begin{array}{c} X^{z_1}(z_2)\\ \vdots  \\  X^{z_1}(z_{m})\end{array}\right) = A^{z_1} \left(\begin{array}{c} \zeta_1 \\ \vdots  \\  \zeta_{m-1} \end{array}\right).$$
    One possible explicit expression, among many others, of $ \widehat\sigma^2_{\mathrm{KL}} ( X_{ \rm{norm}}^{\hat{z}_n}(G_n)) $, the estimator of $\sigma^2$ on the grid $G_n$, is 
    \beq 
   \notag
     \widehat  \sigma^2_n := \widehat{\sigma}^2_{\mathrm{KL}} ( X_{\rm{norm}}^{\widehat z_n}(G_n))= \frac{1}{m-1} \left\|  \left(A^{\widehat z_n}\right)^{-1}\left(\begin{array}{c} X^{(0,0)}(z_2 - \widehat z_n)\\ \vdots  \\  X^{(0,0)}(z_{m} - \widehat z_n)\end{array}\right)\right\|_2^2,
  \eeq
which is a composition of continuous functions of $\widehat{z}_n$. In particular, as $ \widehat z_n$ converges a.s. to  $\widehat z$ (see Lemma~\ref{lem:hat}, Claim $(b)$), we deduce that $\widehat \sigma ^2_n$ converges a.s. to $\widehat {\sigma}^2_{ \widehat z}$ as $n$ goes to infinity.

 Finally, since the KL  estimator is unique, this estimator coincide with the estimator $\hat{\sigma}_2^2$ of~\cite{ADCM16} and Theorem 3  of  \cite{ADCM16} implies that 
 $$ 
  \frac{ \overline F_{m-1} \big( \lambda_{1,n}/  \widehat \sigma_n \big)}{ \overline F_{m-1} \big( \lambda_{2,n}/  \widehat \sigma_n \big)  } \sim \mathcal U([0,1]).
  $$
Note that $\lambda_1^n$ converges almost surely to $\lambda_1$ and $\lambda_{2,n}$ converges in distribution to $\overline{\lambda}_2$ (see Lemma~\ref{lem:z2}) to complete the proof.

\subsection{Proof of Proposition~\ref{l:jm:l1}}
\label{proof:PropSRConditions}

(a). We can assume that $Z$ defined by~\eqref{eq:CorrZSR} is centered and, in this case, it holds
\begin{equation}\label{e:jm:1} 
\forall t \in [0,2\pi)\quad 
 Z(t)  = \frac \sigma{\sqrt N}  \sum_{k=-f_c}^{f_c} \zeta_{k} \exp(\imath k t),
 \end{equation}
 where we recall that $N= 2fc+1$ and $\zeta_{k} = \zeta_{k,1} + \imath \zeta_{k,2}$ for $k=-f_c,\ldots,f_c$ are independent standard complex Gaussian variables. Formula \eqref{e:jm:1} shows that $Z$ satisfies~\eqref{a:KLN}.   
 
 \noindent 
(b). Let $(t_1,\dots,t_N) \in [0,2\pi)$ be pairwise differents, $\theta \in [0,2\pi)$ and set
$$
\left(\begin{array}{c} Z(t_1 )\\ \vdots  \\  Z(t_N)\end{array}\right) =  \left(\begin{array}{c c c} \exp(-\imath f_c t_1) & \dots &  \exp(\imath f_c t_1) \\ \vdots & & \vdots  \\  \exp(-\imath f_c t_N) & \dots & \exp(\imath f_c t_N)\end{array}\right) \left(\begin{array}{c} \zeta_1\\ \vdots  \\  \zeta_N\end{array}\right) =: A_{t_1,\dots,t_N} \zeta,
$$
where $A_{t_1,\dots,t_N}$ is a Vandermonde matrix, invertible as soon as $t_i \neq t_j$ for all $i \neq j$. This prove the first point of \ref{a:nonDegeneratefiniteComplex}. For the second assertion, consider $h > 0$ such that $h < \min_{1\leq i<j\leq N-1}(t_i - t_j)$ and the Gaussian vector
$$
\left(Z(t_1),\dots, Z(t_{N-1}),Z(t_1 + h)\right)^T =: A_{t_1,\dots,t_{N-1},t_{1}+h} \zeta,
$$
where the covariance matrix $A_{t_1,\dots,t_{N-1},t_{1}+h}$ satisfies
\begin{align*}
\det(A_{t_1,\dots,t_1+h}^{*} A_{t_1,\dots,t_1+h} ) &= \prod_{1\leq i<j \leq N-1} |\exp(\imath t_i) -  \exp(\imath t_j)|^2 \prod_{j=1}^{N-1} |\exp(\imath (t_1+h)) - \exp(\imath t_j)|^2 \\
&= 4^{N(N-1)/2}  \prod_{1\leq i<j \leq N-1} \sin^2\left(\frac{t_i - t_j}{2}\right) \prod_{j=1}^{N-1} \sin^2\left(\frac{t_j - (t_1 + h)}{2}\right) \\
&= \sin^2(h/2) \times g_{t_1,\dots,t_{N-1}}(h) 
\end{align*}
where $g_{t_1,\dots,t_{N-1}}(0) \neq 0$ if $(t_i)_{1\leq i \leq N-1}$ are pairwise distincts. Finally, denote by $R_h$ the linear transformation involving the first and the last coordinate such that
$$
\left(\begin{array}{c} Z(t_1) \\\vdots \\ Z(t_{N-1}) \\ \frac{Z(t_1 + h) - Z(t_1)}{h}\end{array}\right) = R_h \left(\begin{array}{c} Z(t_1)\\\vdots \\ Z(t_{N-1}) \\ Z(t_1+h)\end{array}\right)
$$
and remark that
$$
\lim_{h\to 0} \det(A_{t_1,\dots,t_1+h}^{*} R_h^* R_h A_{t_1,\dots,t_1+h}) = g_{t_1,\dots,t_{N-1}}(0) \times \lim_{h \to 0} \frac{\sin^2(h/2)}{h^2} = g_{t_1,\dots,t_{N-1}}(0) \times \frac14 \neq 0 
$$
giving the desired  non degeneracy condition. 

\subsection{Proof of Proposition \ref{cor:rice_known_variance_blasso} and Proposition \ref{cor:rice_unknown_variance_blasso}}
\label{proof:PropSR}
Easy computations give the following results for $\phi(\cdot)$,
\[ \int_{\ell}^{+\infty} \phi(t) \mathrm{d}t = \overline{\Phi}(\ell), ~~ \int_{\ell}^{+\infty} t \phi(t) \mathrm{d}t = \phi(\ell), ~~ \int_{\ell}^{+\infty} t^2 \phi(t) \mathrm{d}t = \ell \phi(\ell) + \overline{\Phi}(\ell), \]
for $f_{m-1}(\cdot)$,
\begin{align*}
\displaystyle \int_{\ell}^{+\infty} & f_{m-1}\left( t \sqrt{\frac{m-1}{m-3}} \right) \mathrm{d}t = \sqrt{\frac{m-3}{m-1}}  ~\overline{F}_{m-1}\left( \ell \sqrt{\frac{m-1}{m-3}}\right), \\
\displaystyle \int_{\ell}^{+\infty} t & f_{m-1}\left( t \sqrt{\frac{m-1}{m-3}} \right) \mathrm{d}t = \frac{(m-3) \sqrt{m-3}}{(m-2)\sqrt{m-1}} ~\frac{\bar{\Gamma}\left(\frac{m}{2}\right) \bar{\Gamma}\left(\frac{m-3}{2}\right)}{\bar{\Gamma}\left(\frac{m-1}{2}\right) \bar{\Gamma}\left(\frac{m-2}{2}\right)} ~f_{m-3}(\ell),\\
\displaystyle \int_{\ell}^{+\infty} t^2 & f_{m-1}\left( t \sqrt{\frac{m-1}{m-3}} \right) \mathrm{d}t = \frac{(m-3) \sqrt{m-3}}{(m-2)\sqrt{m-1}} ~\frac{\bar{\Gamma}\left(\frac{m}{2}\right) \bar{\Gamma}\left(\frac{m-3}{2}\right)}{\bar{\Gamma}\left(\frac{m-1}{2}\right) \bar{\Gamma}\left(\frac{m-2}{2}\right)} \times \left( \ell f_{m-3}(\ell) + \overline{F}_{m-3}(\ell) \right),
\end{align*}
and for $R$,
\begin{align*}
X''(\hat{z}) &= -\tilde{\Lambda} X(\hat{z}) + R(\hat{z}), \\
&= - \left( \begin{array}{c c} 
\alpha_1 & 0\\
0 & 1\\
\end{array} \right) X(\hat{z}) 
+ 
\left( \begin{array}{c c} 
-\alpha_2 & \alpha_3\\ 
\alpha_3 & 0\\
\end{array} \right),
\end{align*}
where
  $$
  \left\{ \begin{array}{l}
  \alpha_1 = \frac{1}{3} f_c (f_c + 1), \\
  \alpha_2 = \frac{1}{\sqrt{N}} \sum\limits_{k=-f_c}^{f_c} (k^2 - \alpha_1) \times \mathrm{Re}(y_{k} e^{\imath (k \hat{t} - \hat{\theta})}),  \\
  \alpha_3 = \frac{1}{\sqrt{N}} \sum\limits_{k=-f_c}^{f_c} k \times \mathrm{Re}(y_{k} e^{\imath (k \hat{t} - \hat{\theta})}).
  \end{array} \right.
  $$
To conclude, use Proposition \ref{l:jm:l1} to apply Theorem \ref{thm:rice_known_variance} and Theorem \ref{thm:rice_unknown_variance}.

\section{Auxiliary results}

\subsection{Regularity of $X^{|z}$ and new expression of $R(z)$}
\begin{lemma} \label{l:jm1}

$X^{|z}(y) $ admits radials limits as $y\to z$. More precisely  for all $\lambda $ in the unit sphere 
$$
 \lim_{u\to 0} X^{|z}(z+u \lambda) = \frac{\lambda^\top R(z) \lambda
}{  \lambda^\top \widetilde \Lambda \lambda}.$$
 \end{lemma}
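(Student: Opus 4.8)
The plan is to obtain the radial limit by a second–order Taylor expansion, around $y=z$, of the numerator and the denominator of $X^{|z}(y)$ separately, and then to pass to the ratio. Fix a unit vector $\lambda\in\bbR^2$ and write $y=z+u\lambda$, so that $z-y=-u\lambda$ and we must let $u\to0$. Two expansions feed the computation. Since $X$ has $\mathcal C^2$ sample paths, almost surely
$$
X(z+u\lambda)=X(z)+u\langle X'(z),\lambda\rangle+\tfrac12 u^2\,\lambda^\top X''(z)\lambda+o(u^2).
$$
On the deterministic side, the correlation function $\rho$ is $\mathcal C^2$ near the origin, is even (by stationarity), and satisfies $\rho(0)=1$, $\rho'(0)=0$, $\rho''(0)=-\widetilde\Lambda$; hence $\rho(-u\lambda)=1-\tfrac12 u^2\,\lambda^\top\widetilde\Lambda\lambda+o(u^2)$ and $\rho'(-u\lambda)=u\,\widetilde\Lambda\lambda+o(u)$.

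First I would dispatch the denominator: $1-\rho(z-y)=1-\rho(-u\lambda)=\tfrac12 u^2\,\lambda^\top\widetilde\Lambda\lambda+o(u^2)$, which is strictly positive for small $u\neq0$ because $\widetilde\Lambda$ is positive definite; in particular no $0/0$ indeterminacy is hidden. Then I would substitute the three expansions above into the numerator $X(y)-\rho(z-y)X(z)+\langle\rho'(z-y),\widetilde\Lambda^{-1}X'(z)\rangle$ and collect powers of $u$. The constant terms cancel ($X(z)-X(z)=0$); the terms of order $u$ cancel as well — this is exactly the purpose of the gradient–regression correction $\langle\rho'(z-y),\widetilde\Lambda^{-1}X'(z)\rangle$: using $\rho'(-u\lambda)=u\widetilde\Lambda\lambda+o(u)$ and the symmetry $\widetilde\Lambda^\top=\widetilde\Lambda$, one has $\langle\widetilde\Lambda\lambda,\widetilde\Lambda^{-1}X'(z)\rangle=\langle X'(z),\lambda\rangle$, so the linear contribution of this term cancels the linear term of $X(z+u\lambda)$. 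What remains is
$$
X(y)-\rho(z-y)X(z)+\langle\rho'(z-y),\widetilde\Lambda^{-1}X'(z)\rangle=\tfrac12 u^2\big(\lambda^\top X''(z)\lambda+(\lambda^\top\widetilde\Lambda\lambda)\,X(z)\big)+o(u^2).
$$

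Finally I would invoke the decomposition $X''(z)=-\widetilde\Lambda X(z)+R(z)$ recalled just before the lemma, which gives $\lambda^\top X''(z)\lambda+(\lambda^\top\widetilde\Lambda\lambda)X(z)=\lambda^\top R(z)\lambda$; hence the numerator equals $\tfrac12 u^2\,\lambda^\top R(z)\lambda+o(u^2)$. Dividing numerator by denominator, the common factor $\tfrac12 u^2$ cancels and, since $\lambda^\top\widetilde\Lambda\lambda>0$,
$$
X^{|z}(z+u\lambda)=\frac{\tfrac12 u^2\,\lambda^\top R(z)\lambda+o(u^2)}{\tfrac12 u^2\,\lambda^\top\widetilde\Lambda\lambda+o(u^2)}\ \xrightarrow[\ u\to0\ ]{}\ \frac{\lambda^\top R(z)\lambda}{\lambda^\top\widetilde\Lambda\lambda},
$$
which is the assertion. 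The only genuinely delicate step is the exact cancellation of the $O(u)$ term of the numerator — it is precisely what dictates the particular normalization in the definition of $X^{|z}$, and it uses $\rho'(0)=0$ together with the symmetry of $\widetilde\Lambda$; the rest is a routine Taylor computation, and positive-definiteness of $\widetilde\Lambda$ guarantees that the limit is well defined.
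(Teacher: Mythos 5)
You follow exactly the paper's route: second-order Taylor expansion of numerator and denominator along the ray $y=z+u\lambda$, then the decomposition $X''(z)=-\widetilde\Lambda X(z)+R(z)$ to identify the quadratic coefficient. However, the step you yourself single out as delicate does not follow from the formulas you wrote. Reading the definition of $X^{|z}$ literally, the correction term is $+\langle\rho'(z-y),\widetilde\Lambda^{-1}X'(z)\rangle$, and your (correct) expansion $\rho'(z-y)=\rho'(-u\lambda)=u\widetilde\Lambda\lambda+o(u)$ makes it contribute $+u\langle X'(z),\lambda\rangle+o(u)$, which has the \emph{same} sign as the linear term $+u\langle X'(z),\lambda\rangle$ in the expansion of $X(z+u\lambda)$: the two add up to $2u\langle X'(z),\lambda\rangle$ rather than cancel, and for a generic $z$ with $X'(z)\neq0$ the quotient would then blow up like $1/u$, contradicting the claimed limit.

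The cancellation is genuine once the sign convention is fixed: $X^{|z}$ is (up to normalization) the residual of the regression of $X(y)$ on $(X(z),X'(z))$, and since $\Cov(X(y),X'(z))=\sigma^2\rho'(z-y)$ this residual is $X(y)-\rho(z-y)X(z)-\langle\rho'(z-y),\widetilde\Lambda^{-1}X'(z)\rangle$, equivalently, by oddness of $\rho'$, $X(y)-\rho(z-y)X(z)+\langle\rho'(y-z),\widetilde\Lambda^{-1}X'(z)\rangle$. This is what the paper's own proof uses implicitly, since it expands $\rho'(u\lambda)=-u\widetilde\Lambda\lambda+o(u)$, i.e.\ the gradient evaluated at $y-z$. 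With that reading your computation goes through verbatim and coincides with the paper's: constant and linear terms cancel, the numerator equals $\frac{u^2}{2}\big(\lambda^\top X''(z)\lambda+X(z)\,\lambda^\top\widetilde\Lambda\lambda\big)+o(u^2)=\frac{u^2}{2}\,\lambda^\top R(z)\lambda+o(u^2)$, and dividing by $1-\rho(-u\lambda)=\frac{u^2}{2}\,\lambda^\top\widetilde\Lambda\lambda+o(u^2)>0$ gives the stated limit. So the defect is a sign-bookkeeping slip (largely induced by the ambiguous sign in the displayed definition of $X^{|z}$, and invisible at the point $z=\widehat z$ where $X'(\widehat z)=0$), not a wrong approach; fix the sign of the gradient correction and your argument is the paper's proof.
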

 
  \begin{proof}
  As $u$  tends to zero 
  $$
   1 - \rho(u \lambda)   =  \frac{u^2}{2} ( \lambda^\top \widetilde\Lambda \lambda + o(1)).
  $$
 Moreover, a Taylor expansion gives
  $$
  X(z+u\lambda)  = X(z) + u X_{\lambda}'(z) + \frac{u^2}{2} X_{\lambda}''(z) + o_p(u^2),
$$
and
$$ \rho_\lambda'(u\lambda) = u \rho_\lambda''(0) + o_p(u^2)  = -u \widetilde \Lambda + o_p(u^2), $$
  where $(X_{\lambda}',\rho_\lambda') $ and $(X_{\lambda}'',\rho_\lambda'') $ are directional derivative and directional Hessian. By consequence,
  \begin{align*}
X^{|z}(z+u \lambda) &= \frac{\frac{u^2}{2} X(z) \lambda^\top \widetilde\Lambda \lambda + \langle \rho'(u\lambda) ,\widetilde \Lambda^{-1} X'(z) \rangle + u X_{\lambda}'(z) + \frac{u^2}{2} X_{\lambda}''(z) + o_p(u^2)}{\frac{u^2}{2}( \lambda^\top \widetilde\Lambda \lambda + o(1))} \\
&= \frac{\frac{u^2}{2} \left( X(z) \lambda^\top \widetilde\Lambda \lambda + X_\lambda''(z) + o_p(1)\right)}{\frac{u^2}{2} (\lambda^\top \widetilde\Lambda \lambda + o(1))}
  \end{align*}
which tends to
$$\frac{ \lambda^\top \left(\widetilde\Lambda X(z) + X''(z) \right) \lambda}{\lambda^\top \widetilde\Lambda \lambda}
$$
as $u$ tends to $0$ since $X_{\lambda}''(z) = \lambda^\top X''(z)\lambda$. The result follows from $X''( z) = -\widetilde\Lambda X( z) +   R(z)$.
  \end{proof}

\subsection{Maximum of a continuous process}
The following result is borrowed from \cite[Theorem 3]{lifshits1983absolute} and \cite{tsirel1976density}.
\begin{proposition}
\label{prop:Unicity}
Let $ \{Y(t)\, ;\  t\in T\}$ be a Gaussian process with continuous sample paths defined on a compact metric space $T$. Suppose in addition that: 
\begin{equation} 
\label{e:lif}
\mathrm{There\ is\ no\ two\ different\ points\ s,t\ }\in T\ \mathrm{such\ that}\ X(s) = X(t)\ a.s.
\end{equation}
Then almost  surely  the maximum of $X$ on $T$  is attained at a single point.
\end{proposition}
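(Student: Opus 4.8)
\textbf{Proof proposal for Proposition~\ref{prop:Unicity}.}

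The plan is to reduce uniqueness of the argmax to the single-point condition \eqref{e:lif} by a countable union argument over a dense family of "separating tests''. First I would take a countable dense subset $D=\{t_k\}_{k\geq1}$ of the compact metric space $T$ (which exists since $T$ is compact, hence separable). Observe that, by continuity of sample paths, the event that the maximum is attained at two distinct points $s\neq t$ implies that there exist indices $i\neq j$ and rational radii $q_i,q_j>0$ with $B(t_i,q_i)\cap B(t_j,q_j)=\emptyset$ such that $\max_{\bar B(t_i,q_i)}Y=\max_{\bar B(t_j,q_j)}Y=\max_T Y$. Since there are only countably many such quadruples $(i,j,q_i,q_j)$, it suffices to show that for each fixed pair of disjoint closed sets $F_1,F_2\subset T$ one has $\mathbb P\{\max_{F_1}Y=\max_{F_2}Y\}=0$.

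The key step is then the following: for fixed disjoint compact sets $F_1,F_2$, consider the random variable $W:=\max_{F_1}Y-\max_{F_2}Y$. I would invoke the classical result on the absolute continuity of the law of the supremum of a Gaussian process (Tsirelson \cite{tsirel1976density}, Lifshits \cite{lifshits1983absolute}): the law of $M_i:=\max_{F_i}Y$ has a density on $\mathbb R$ away from the essential infimum, \emph{unless} $Y$ is a.s. constant on $F_i$. In the degenerate case where $Y\equiv c_i$ a.s.\ on some $F_i$ with $c_i$ deterministic, the event $\{M_1=M_2\}$ forces a nontrivial linear relation; but more directly, since $F_1,F_2$ are nonempty and disjoint, pick $s\in F_1$, $t\in F_2$: if $Y$ is a.s.\ constant on a neighborhood containing both, this contradicts \eqref{e:lif} via $Y(s)=Y(t)$ a.s. In the nondegenerate case, one uses that $(M_1,M_2)$ has a joint law whose support is not contained in the diagonal — precisely because \eqref{e:lif} guarantees $Y(s)-Y(t)$ is a nonconstant Gaussian variable for $s\in F_1,t\in F_2$, so $M_1-M_2$ cannot be a.s.\ zero — and an absolute-continuity / Cameron--Martin argument (shifting $Y$ by a suitable element of the RKHS supported near $s$) shows $\mathbb P\{W=0\}=0$.

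The main obstacle I anticipate is making the last absolute-continuity argument rigorous in the generality of an arbitrary Gaussian process on a compact metric space: one cannot simply differentiate $\max_{F_i}Y$, so the clean route is to cite \cite[Theorem~3]{lifshits1983absolute} directly, which states that under \eqref{e:lif} the maximum over $T$ is attained at a unique point — in which case the proof is essentially a pointer to that theorem together with the observation that \eqref{e:Normalization} in our setting implies \eqref{e:lif} (since $|\Gamma(t)|<1$ on $(0,2\pi)$ forces $\mathrm{Var}(X(s)-X(t))>0$ for $s\neq t$, hence $X(s)\neq X(t)$ with positive probability, in fact a.s.\ they differ). Thus the honest proof is: verify the hypothesis \eqref{e:lif} for our specific processes, then apply the cited theorem verbatim; the only real content on our side is the variance computation showing \eqref{e:Normalization} $\Rightarrow$ \eqref{e:lif}.
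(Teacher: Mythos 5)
Your final ``honest proof'' is exactly what the paper does: Proposition~\ref{prop:Unicity} is simply borrowed from \cite[Theorem 3]{lifshits1983absolute} and \cite{tsirel1976density}, the only content on the paper's side being the observation that \eqref{e:Normalization} implies \eqref{e:lif}, which your variance computation correctly supplies. Your preliminary sketch (countable union over disjoint balls plus marginal absolute continuity of each supremum) would not by itself close the argument, since absolute continuity of the laws of $\max_{F_1}Y$ and $\max_{F_2}Y$ separately does not exclude an atom of their difference at zero, but since you recognize this and defer to the cited theorem, the proposal is correct and takes essentially the same route as the paper.
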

\noindent
Observe that \eqref{e:Normalization} implies \eqref{e:lif}.
 
 \begin{remark}
 \label{rem:Pumping}
 Proposition \ref{prop:Unicity} can be applied to the process $X^{| \widehat z} $ which is not continuous on a compact set. We use  the ‘‘pumping method'' as follows. Use 
 \begin{itemize}
 \item[$(a)$] a parameterization  of $\bbT$  as $[0,2\pi)^2$,
 \item[$(b)$] polar coordinates for $y \in \bbT  \setminus\{\widehat z\}$ with origin at $\widehat z$, 
 \item[$(c)$] the change of parameter 
 $$
y=( \rho, \theta)  \mapsto  ((\rho+1) , \theta)
$$
that  transforms the non-compact set  $\bbT \setminus \{\widehat z\}$ into a compact set $($we have inflated the ‘‘hole'' $\{\widehat z\}$  into a ball centered around $\widehat z$ with radius one$)$ on which the process $X^{| \widehat z} $  is continuous thanks to Lemma~\ref{l:jm1}.
 \end{itemize} 
\end{remark}



 \bibliographystyle{abbrv}
 \bibliography{references_all}

\end{document}